\newcommand{\C}{\mathbb{C}}
\newcommand{\R}{\mathbb{R}}
\newcommand{\Z}{\mathbb{Z}}
\theoremstyle{plain}
\newtheorem{theorem}{Theorem}[section]
\newtheorem{proposition}[theorem]{Proposition}
\newtheorem{lemma}[theorem]{Lemma}
\newtheorem{Corollary}[theorem]{Corollary}
\theoremstyle{definition}
\newtheorem{remark}[theorem]{Remark}
\newtheorem{example}[theorem]{Example}
\newcounter{constantLABEL}
\newcommand{\cref}[1]{C_{\ref{#1}}}
\newcounter{constantslabel}
\begin{document}


\title{A construction of $Spin(7)$-instantons} 


\author{Y. Tanaka}

\date{}


\maketitle


\begin{abstract}

Joyce constructed examples of compact eight-manifolds with holonomy 
$Spin(7)$, starting with a 
Calabi--Yau four-orbifold with isolated singular points of a special kind. 
That construction can be seen as the gluing of ALE 
$Spin(7)$-manifolds to each singular point of the Calabi--Yau 
four-orbifold divided by an anti-holomorphic 
involution fixing only the singular points.  

On the other hand,  
there are higher-dimensional analogues of 
anti-self-dual instantons in four dimensions on $Spin(7)$-manifolds, 
which are called $Spin(7)$-instantons. 
They are minimizers of the Yang--Mills action, and 
the $Spin(7)$-instanton equation together with a gauge fixing condition 
forms an elliptic system. 

In this article, we construct $Spin(7)$-instantons on the examples of 
compact $Spin(7)$-manifolds above, starting with 
Hermitian--Einstein connections on the Calabi--Yau four-orbifolds 
and ALE spaces. 
Under some assumptions on the Hermitian--Einstein connections, we glue 
them together to obtain $Spin(7)$-instantons 
on the compact $Spin(7)$-manifolds. 
We also give a simple example of our construction.

\end{abstract}


\markboth{}
{A construction of $Spin(7)$-instantons}

\section{Introduction}

This article is about a construction of $Spin(7)$-instantons on examples 
of compact Riemannian 8-manifolds with holonomy $Spin(7)$. 
We construct those on Joyce's $Spin(7)$-manifolds of the second type, 
namely on a resolution 
of the quotient of a Calabi--Yau four-orbifold by an 
anti-holomorphic involution fixing only the singular points.

A $Spin(7)$-manifold is an 8-dimensional Riemannian manifold with
holonomy 
contained in the group $Spin(7)$. 
The holonomy group $Spin(7)$ is one of exceptional cases 
(the other is the group $G_2$) of Berger's 
classification of Riemannian holonomy groups of simply-connected, 
irreducible, non-symmetric Riemannian manifolds  
\cite{MR0079806}. 
Later metrics with holonomy $Spin(7)$ (and $G_2$ as well) were obtained by 
Bryant \cite{MR916718}, Bryant--Salamon \cite{MR1016448} 
for non-compact cases, and by Joyce  \cite{MR1383960}, 
\cite{MR1776092}, \cite{MR1787733}    
for compact cases.

There are two types in Joyce's constructions of compact 
$Spin(7)$-manifolds, namely, the construction of the metrics on  
\begin{enumerate}
 \item[(I)] the resolution of $T^8 /\Gamma$, where $T^8$ is a torus 
 and $\Gamma$ is a finite subgroup of automorphisms of $T^8$ 
\cite{MR1383960}, (\cite{MR1787733}). 
 \item[(I$\!$I)] the resolution of Calabi--Yau four-orbifolds with 
finitely many singular points, and an anti-holomorphic 
involution fixing only the singular points \cite{MR1776092}, 
(\cite{MR1787733}).  
\end{enumerate}

$Spin(7)$-instantons are Yang--Mills connections on a 
$Spin(7)$-manifold, which minimize 
the Yang--Mills action. 
They are higher-dimensional analogues of anti-self-dual 
instantons in four dimensions, discussed firstly by physicists such as 
Corrigan--Devchand--Fairlie--Nuyts  
\cite{CoDeFaNu83}, Ward \cite{Ward84}, and later,  
in the String Theory context,  
by Acharya--O'Loughlin--Spence \cite{AcOLSp97},  
Baulieu--Kanno--Singer \cite{BaKaSi98}, and others. 
In mathematics, they were studied 
by Reyes Carri\'{o}n \cite{Reyes_Carrion98}, 
Lewis \cite{Lewis}, Donaldson--Thomas \cite{DT}, 
and later  by 
Donaldson--Segal \cite{DS0902}, 
and several others. 
Analytic results concerning gauge theory in higher dimensions were 
obtained by Nakajima \cite{Nakajima88}, \cite{Nakajima87}, Tian 
\cite{Tian00}, Brendle \cite{Brendle03c}, 
\cite{Brendle03o},  
Tao--Tian \cite{TaTi04}, and others.

Lewis \cite{Lewis} constructed $Spin(7)$-instantons 
on the $Spin(7)$-manifolds of 
the first type (I). 
He constructed them from a family of anti-self-dual instantons on 
$\R^4$ along a Cayley submanifold and glued them together to get a 
$Spin(7)$-instanton on the $Spin(7)$-manifold.

In this article, we construct $Spin(7)$-instantons on the $Spin(7)$ 
manifolds of the second type (I$\!$I). 
The $Spin(7)$-manifold is  obtained by gluing ALE $Spin(7)$-manifolds 
at each singular point of a Calabi--Yau four-orbifold with finitely many 
singular points, and an anti-holomorphic involution fixing only the singular points. 
In this article, assuming that there are Hermitian--Einstein 
connections with certain conditions 
on both the Calabi--Yau four-orbifold and the ALE spaces, 
we glue them together to obtain a $Spin(7)$-instanton on the manifold 
(Theorem 6.1).

The organization of this article is as follows. 
In Section 2, 
we outline Joyce's construction of $Spin(7)$-manifolds from 
a Calabi--Yau four-orbifold with finitely many singular points, and an 
anti-holomorphic involution fixing only the singular set. 
In Section 3, we introduce the $Spin(7)$-instanton 
equation and describe some of its properties, such as 
its relation to the complex ASD equation and the Hermitian--Einstein 
equation, and 
the linearization of them. 
In  Section 4,  we construct approximate solutions from 
Hermitian--Einstein connections with certain conditions on a 
Calabi--Yau 
four-orbifold and ALE spaces,  
and derive an estimate which we need for 
the construction. 
In Section 5, we discuss the linearization of the $Spin(7)$-instanton 
equation, and derive estimates coming from the Fredholm property of the 
linearized operator.  
In Section 6, we give a construction of $Spin(7)$-instantons 
by using the estimates in Sections 4 and 5.  
A simple example of the construction is given in Section 7.

\paragraph{Notations.}Throughout this article, $C$ 
is a positive constant independent of $t$, 
where $t$ is a gluing parameter which is introduced in Section 2.3, 
but it can be different each time it occurs.

\paragraph{Acknowledgements.}

I would like to thank Dominic Joyce for teaching me about the $Spin(7)$-instantons 
and all the other things in this article. 
His ideas and insight can be found throughout this article.  
I am very grateful for his enormous help   
during the composition of this article, including reading the manuscripts so many times. 
I would also like to thank Tommaso Pacini and Heinrich Hartmann for useful conversations, and to thank the Mathematical Institute, Oxford for hospitality. 
This work was supported by a Marie-Curie fellowship of the European Commission under contract number PIIF-GA-2009-235231.

\section{Joyce's second construction of compact $\mathbf{Spin(7)}$-manifolds}
\label{sec:ssm}

We briefly describe the $Spin(7)$-manifolds constructed 
by Joyce in \cite{MR1776092} (see also \cite{MR1787733}, Chapter 15). 
General references for $Spin(7)$-manifolds are Salamon \cite{MR1004008}  
and Joyce \cite{MR1787733}.

\subsection{$\mathbf{Spin(7)}$-manifolds}

The group $Spin(7)\subset SO(8)$ is a compact, connected, 
simply-connected, semi-simple Lie group of dimension 21, 
the double cover of $SO(7)$. 
We introduce it as a subgroup of $GL(8, \R)$ in the following manner.

Let $(x_1, x_2, \dots , x_8)$ be coordinates of $\R^8$, $g_{0}$ the 
standard metric on $\R^8$. 
The $GL(8, \R)$-stabilizer of the 
four-form defined by 
\begin{equation*}
\begin{split}
 \Omega_0 &:= 
\tau^{1256} +\tau^{1278} +\tau^{3456} + \tau^{3478} + \tau^{1357} 
- \tau^{1368} -\tau^{1457} \\
& \qquad + \tau^{2468} - \tau^{1458} -\tau^{1467} - \tau^{2358} 
- \tau^{2367}  + \tau^{1234} + \tau^{5678} ,\\ 
\end{split}
\end{equation*}
where $\tau^{ijkl}$ denotes $dx^i \wedge dx^j \wedge dx^k \wedge dx^l$, 
is isomorphic to the group $Spin(7)$ (\cite{HaLa82}).

The group $Spin(7)$ preserves the metric $g_0$ and an orientation on 
$\R^8$. 
Let $\Omega$ be a four-form on $M$ and $g$ a metric on $M$. 
We call a pair $(\Omega , g)$ a {\it $Spin(7)$-structure} 
if $(\Omega , g)$ 
is isomorphic to  $(\Omega_0 , g_0)$ at each point in $M$. 
We call $\nabla \Omega$ the {\it torsion} of the $Spin(7)$-structure, 
where $\nabla$ is the Levi-Civita connection of $g$, and 
$(\Omega ,g)$ {\it torsion-free} if $\nabla \Omega =0$.

\begin{proposition}[\cite{MR1787733} Proposition 10.5.3] 
Let $M$ be an eight-manifold with a $Spin(7)$-structure $(\Omega , g)$.  
Then the following are equivalent: 
\begin{enumerate} 
\item[$(i)$] $\text{Hol} (g) \subset$ $Spin(7)$;  
\item[$(ii)$] $\nabla \Omega =0$; and  
\item[$(iii)$] $d \Omega =0$. 
\end{enumerate}
\end{proposition}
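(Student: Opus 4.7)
The equivalence (i)$\Leftrightarrow$(ii) is a direct application of the holonomy principle. Since $Spin(7)$ is defined as the $GL(8,\R)$-stabilizer of $\Omega_0$, the datum of a $Spin(7)$-structure is equivalent to a reduction of the frame bundle to $Spin(7)$, characterized by the section $\Omega$. The holonomy of $g$ is contained in $Spin(7)$ precisely when parallel transport preserves the defining tensor $\Omega$, i.e.\ when $\nabla\Omega=0$. I would write this out in one short paragraph, citing the general holonomy correspondence.

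The implication (ii)$\Rightarrow$(iii) is immediate: $d\Omega$ is the antisymmetrization of $\nabla\Omega$, so if the latter vanishes so does the former.

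The substantive content is (iii)$\Rightarrow$(ii). My plan is to use a representation-theoretic argument on the intrinsic torsion. First I would observe that, because $\Omega$ is stabilized pointwise by $Spin(7)\subset SO(8)$, the tensor $\nabla\Omega$ does not range over all of $T^*M\otimes\Lambda^4T^*M$, but rather lies in the subbundle $T^*M\otimes\mathfrak{spin}(7)^\perp\cdot\Omega$, where $\mathfrak{spin}(7)^\perp$ denotes the orthogonal complement of $\mathfrak{spin}(7)$ in $\mathfrak{so}(8)$. Since $\dim\mathfrak{so}(8)=28$ and $\dim\mathfrak{spin}(7)=21$, this subbundle has rank $8\cdot 7=56$, and it decomposes under $Spin(7)$ into irreducible pieces of dimensions $8$ and $48$.

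The crucial step is to show that the antisymmetrization map sending $\nabla\Omega$ to $d\Omega\in\Gamma(\Lambda^5T^*M)$, restricted to this intrinsic torsion bundle, is a $Spin(7)$-equivariant isomorphism onto $\Lambda^5T^*M$. This is plausible on dimensional grounds: $\Lambda^5T^*M\cong\Lambda^3T^*M$ has rank $56$ and splits under $Spin(7)$ as $\mathbf{8}\oplus\mathbf{48}$, matching the decomposition above. By Schur's lemma it then suffices to check that the map is nonzero on each of the two irreducible summands, which is a finite-dimensional pointwise check at a single point using the explicit form of $\Omega_0$. Once this isomorphism is established, $d\Omega=0$ forces $\nabla\Omega=0$ pointwise and hence everywhere, giving (ii).

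The main obstacle is precisely this Schur-type verification that the map $\nabla\Omega\mapsto d\Omega$ is injective on the intrinsic torsion bundle; in practice I would either cite the representation decompositions worked out in Salamon \cite{MR1004008} and Joyce \cite{MR1787733}, or perform the pointwise computation using $\Omega_0$ together with a judiciously chosen generator of $\mathfrak{spin}(7)^\perp$ to see that the $\mathbf{8}$-component of $d\Omega$ is a nonzero multiple of the corresponding component of $\nabla\Omega$, and similarly for $\mathbf{48}$.
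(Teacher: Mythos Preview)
The paper does not supply its own proof of this proposition: it is quoted with attribution to \cite{MR1787733}, Proposition 10.5.3, and no proof environment follows. So there is nothing in the paper to compare your argument against.

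That said, your sketch is essentially the standard argument one finds in Joyce's book and in Salamon \cite{MR1004008} (and going back to Fern\'andez): identify the intrinsic torsion with a section of an associated bundle of fibre $\R^8\otimes\mathfrak{spin}(7)^\perp\cong\mathbf{8}\oplus\mathbf{48}$, observe that $\Lambda^5\R^8$ carries the same $Spin(7)$-type, and check that the alternation map $\nabla\Omega\mapsto d\Omega$ is a $Spin(7)$-equivariant isomorphism between them. Your identification of the ``main obstacle'' is accurate: the only nontrivial step is the Schur-lemma verification that this map is nonzero on each irreducible summand, and in practice one either does the explicit pointwise check with $\Omega_0$ or cites the references you name. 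Nothing in your outline is wrong or missing.
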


An eight-manifold with a $Spin(7)$-structure $(\Omega , g)$ is called a 
{\it $Spin(7)$-manifold} if the $Spin(7)$-structure is torsion-free. 
If $g$ has holonomy $\text{Hol} (g) \subset$ $Spin(7)$, then $g$ is 
Ricci-flat. 
The following holds for compact eight-manifolds with holonomy $Spin(7)$.

\begin{theorem}[\cite{MR1787733} Theorem 10.6.8] 
Let $M$ be a compact $Spin(7)$-manifold with torsion-free 
$Spin(7)$-structure $(\Omega , g)$. 
Then $\text{Hol}(g) = \text{$Spin(7)$}$ if and only 
if $\pi_{1} (M) = 0 $ and $\hat{A} (M) =1$. 
\end{theorem}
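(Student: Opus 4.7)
My plan is to combine two tools. First, since $(\Omega,g)$ is torsion-free the manifold is spin and Ricci-flat, so the Lichnerowicz--Weitzenb\"ock formula $D^{2} = \nabla^{*}\nabla + \tfrac{1}{4}s$ forces harmonic spinors to be parallel; the Atiyah--Singer index theorem then gives
\begin{equation*}
\hat{A}(M) \;=\; n^{+} - n^{-}, \qquad n^{\pm} := \dim(\Delta^{\pm})^{\text{Hol}(g)},
\end{equation*}
where $\Delta^{\pm}$ are the half-spin representations of $Spin(8)$. Under $Spin(7) \subset Spin(8)$ one has the branching $\Delta^{+} \cong \R \oplus V_{7}$ and $\Delta^{-} \cong V_{8}$, with $V_{7}$ and $V_{8}$ the standard 7- and 8-dimensional irreducible $Spin(7)$-representations; in particular $n^{+} = 1$ and $n^{-} = 0$ when $\text{Hol}(g) = Spin(7)$. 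Second, I will apply the De Rham decomposition theorem together with Berger's classification to list the possible holonomy reductions inside $Spin(7)$.

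Assume first that $\text{Hol}(g) = Spin(7)$. The branching above directly gives $\hat{A}(M) = 1$. Because $Spin(7)$ acts irreducibly on $\R^{8}$, the Cheeger--Gromoll splitting theorem implies that the universal cover $\widetilde{M}$ is compact, so $\pi_{1}(M)$ is finite. Since $Spin(7)$ is connected, $\text{Hol}(\widetilde{M}) = Spin(7)$ as well, and the same calculation gives $\hat{A}(\widetilde{M}) = 1$; the multiplicativity $\hat{A}(\widetilde{M}) = |\pi_{1}(M)| \, \hat{A}(M)$ under finite covers then forces $|\pi_{1}(M)| = 1$.

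For the converse, assume $\pi_{1}(M) = 0$ and $\hat{A}(M) = 1$. Since $M$ is simply-connected, compact and Ricci-flat, the De Rham theorem writes $M$ isometrically as a product of simply-connected irreducible Ricci-flat manifolds (no flat $\R$-factor appears, by compactness and simple-connectedness). Combining Berger's list with $\dim M = 8$ and $\text{Hol}(g) \subset Spin(7)$ leaves only four cases: $M$ irreducible with $\text{Hol}(g) \in \{Spin(7),\,SU(4),\,Sp(2)\}$, or $M = K_{1} \times K_{2}$ with each $K_{i}$ a K3 surface and $\text{Hol}(g) = SU(2) \times SU(2)$. Decomposing $\Delta^{\pm}$ under each of these subgroups, and using $\Delta^{+}(K_{1} \times K_{2}) = \Delta^{+}_{1} \otimes \Delta^{+}_{2} \oplus \Delta^{-}_{1} \otimes \Delta^{-}_{2}$ in the product case together with $n^{+} = 2$, $n^{-} = 0$ on each K3 surface, yields $n^{+} - n^{-} = 1,\,2,\,3,\,4$ respectively; so $\hat{A}(M) = 1$ singles out the first case, $\text{Hol}(g) = Spin(7)$.

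The main technical step is the case-by-case decomposition of $\Delta^{\pm}$ under each candidate holonomy subgroup, together with verifying that every nontrivial reduction inside $Spin(7)$ produces at least one extra parallel spinor. Once this representation-theoretic table is in hand, both directions of the theorem drop out from the identity $\hat{A}(M) = n^{+} - n^{-}$.
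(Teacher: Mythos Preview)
The paper does not prove this statement; it is quoted without proof as Theorem~10.6.8 from Joyce's monograph \cite{MR1787733}, so there is no in-paper argument to compare against. Your outline is correct and is in fact the standard proof (essentially Joyce's): use Lichnerowicz and the index theorem to identify $\hat{A}(M)$ with the signed count of parallel spinors, list the possible holonomy reductions inside $Spin(7)$ via the de~Rham decomposition and Berger's classification, and check that the values $\hat{A}=1,2,3,4$ distinguish $Spin(7)$, $SU(4)$, $Sp(2)$, and $SU(2)\times SU(2)$. The forward direction, with the Cheeger--Gromoll/covering argument to force $|\pi_{1}(M)|=1$, is also the usual one.
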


\subsection{Ingredients for the construction}

In the construction \cite{MR1776092}, $Spin(7)$-manifolds are
constructed 
from the following ingredients: 
\begin{enumerate} 
\item[(A)] a Calabi--Yau four-orbifold $Y$ with only 
isolated singular points, and an anti-holomorphic involution $\sigma$ on
	   $Y$ which fixes only the singular points,   
\item[(B)] ALE $Spin(7)$-manifolds $X_1 , X_2$.  
\end{enumerate} 
We describe each of pieces needed for the construction 
in more detail below.

\paragraph{(A) The Calabi--Yau four-orbifolds. }

A Calabi--Yau $m$-orbifold is a K\"{a}hler orbifold $Y$ of dimension $m$ 
with a K\"{a}hler metric of 
holonomy contained in $SU(m)$.

For the construction, we take a Calabi--Yau four-orbifold $Y$ 
with K\"{a}hler form $\omega$ and holomorphic volume $\theta$. 
We assume that $Y$ has 
finitely many singular points $p_1 ,
p_2 , \dots , p_{k}$ satisfying the following conditions:

\begin{itemize} 
\item 
Each singularity is modeled on $\C^4 / \langle \alpha
\rangle$, where 
$\alpha : \C^4 \to \C^4$  is defined by 
\begin{equation*}
\alpha : (z_1 , z_2 , z_3 ,z_4) \mapsto 
 (i z_1 , i z_2 , i z_3 , i z_4). 
\end{equation*} 
Here $\langle \alpha \rangle \equiv \Z_4$, and $\C^4 / \langle \alpha 
\rangle$ 
has an isolated 
singularity at the origin. 
\item $Y$ has an anti-holomorphic involution $\sigma$, which fixes only  
the singular points $p_1, p_2 , \dots , p_k$. 
\item $Y \setminus \{ p_1 , p_2, \dots , p_{k} \}$ is 
simply-connected, and $h^{2,0} (Y) =0$. 
\end{itemize}

Since $SU(4) \subset Spin(7)$, and $Y$ has holonomy $SU(4)$, there 
is a torsion-free $Spin(7)$-structure on $Y$, which is given by $\Omega 
= \frac{1}{2} \omega^2 + \text{Re} (\theta)$. If we take a 
$\sigma$-invariant $Spin(7)$-structure $(\Omega , g)$ on $Y$, then this
descends to $Z = Y / \langle \sigma \rangle$. 
Hence $Z$ is a $Spin(7)$-orbifold with finitely 
many singular points $p_1 , \dots ,p_{k}$.

For each $j= 1,2, \dots , k$, the tangent space $T_{p_j} Y$ can be 
identified with $\C^{4} / \langle \alpha \rangle$ so that $g_Y$ is 
identified with $|dz_1|^2 + \cdots + |dz_{4} |^2$, $\theta_{Y}$ is 
identified with $dz_1 \wedge \cdots \wedge dz_{4}$, and 
$d \sigma : T_{p_j} Y \to T_{p_j} Y$ is identified with 
$\beta : \C^4 / \langle \alpha \rangle  \to \C^4 / \langle \alpha 
\rangle $ defined by 
\begin{equation*} 
 \beta : (z_1 , z_2 , z_3 ,z_4) \mapsto 
 ( \bar{z}_2 , - \bar{z}_1 , \bar{z}_4 , -\bar{z}_{3}).
\end{equation*}  
Thus, all singularities are modeled on $\R^{8} / \Gamma_{8}$, where $\Gamma_{8} =
\langle \alpha , \beta \rangle$ is a non-abelian group of order 8, 
and there is an isomorphism $i_j : \R^8 / \Gamma_{8} \to T_{p_j} Z$ 
which identifies the $Spin(7)$-structure $(\Omega_{0} ,g_0 )$ on $\R^8 / \Gamma_{8}$ with 
$(\Omega_Z , g_Z)$ on $T_{p_j} Z$ for each $j=1, \dots , k$.

Many examples of Calabi--Yau four-orbifolds satisfying the requirements
in the construction are in hypersurfaces or complete intersections 
in the weighted projective spaces. 
The simplest is the following:

\begin{example}[\cite{MR1787733} pp. $\!$406-407]
Consider the following  hypersurface of degree 12 
in the weighted projective space $\C \mathbb{P}^{5}_{1,1,1,1, 4,4}$, 
given by 
\begin{equation*} 
 [z_{0} :z_1 : z_2 : z_3 : z_4 :z_5 ] 
\in \C \mathbb{P}^{5}_{1,1,1,1, 4,4} : \,  
z_{0}^{12} + z_{1}^{12} + z_{2}^{12} + z_{3}^{12} 
 +z_{4}^{3} +z_{5}^3 = 0 .
\end{equation*}  
Then $c_1 (Y) =0$, thus $Y$ is a Calabi--Yau four-orbifold. 
It has three singular points $p_1 =[0,0,0,0, 1, -1], 
p_2 =[0,0,0,0, 1, e^{i \pi / 3}], 
p_3 =[0,0,0,0, 1, e^{- i \pi / 3}]$.

Define $\sigma : Y \to Y$ by 
\begin{equation*}
  [z_{0} : z_1 : z_2 : z_3 : z_4 :z_5 ] 
 \mapsto  [ \bar{z}_1 : - \bar{z}_0  : \bar{z}_{3} : - \bar{z}_{2} 
: \bar{z}_{5} :  \bar{z}_{4} ] . 
\end{equation*} 
Then $\sigma$ is an anti-holomorphic involution which fixes only the 
singular points $p_1 , p_2 , p_3$.  
\end{example}

\paragraph{(B) The ALE $\mathbf{Spin(7)}$-manifolds. }

Let $\Gamma$ be a finite subgroup of the group $Spin(7)$ 
which acts freely on
$\R^8 \setminus 0$. We call a $Spin(7)$-manifold $M$ with 
$Spin(7)$-structure $(\Omega ,g)$ 
an {\it ALE $Spin(7)$-manifold asymptotic to} $\R^8 / \Gamma$ if there 
is a proper 
continuous surjective map $\pi : X \to \R^8 / \Gamma$ such that 
$\pi : X \setminus \pi^{-1} (0) \to (\R^8 / \Gamma) \setminus 0$ is a 
diffeomorphism, and 
\begin{equation*}
 \nabla^{k} (\pi_{*} (g) -g ) = O ( r^{-8-k}) , 
\quad  \nabla^{k} ( \pi_{*} (\Omega) - \Omega_{0} ) = O (r^{-8-k}) 
\end{equation*} 
on $\{ x \in \R^8 / \Gamma : r(x) > 1 \}$ 
for all $k \geq 0$, where $r$ is the 
radius function on $\R^8 / \Gamma$.

We introduce two types of ALE $Spin(7)$-manifolds denoted by $X_1, X_2$ 
for the construction as follows. 
\begin{enumerate}
 \item[(I)] Define complex coordinates $(z_1, z_2 , z_3 , z_4)$ on $\R^8$ by 
\begin{equation*}
 (z_1, z_2 , z_3 , z_4) = (x_1 + i x_2, 
x_3 + i x_4 , x_5 + i x_6 , x_7 + i x_8) . 
\end{equation*} 
Then $g_{0} = |dz_{1}|^2 + \cdots + |dz_{4}|^2$ and 
$\Omega_{0} = \frac{1}{2} \omega_{0} \wedge \omega_{0} + 
\text{Re} (\theta_{0})$.

Define $\alpha , \, \beta : \C^4 \to \C^4$ by 
\begin{gather*}
 \alpha : (z_1 , z_2 , z_3 ,z_4) \mapsto 
 (i z_1 , i z_2 , i z_3 , i z_4), \\ 
 \beta : (z_1 , z_2 , z_3 ,z_4) \mapsto 
 ( \bar{z}_2 , - \bar{z}_1 , \bar{z}_4 , -\bar{z}_{3}).  
\end{gather*}
We denote by $W_{1}$ the crepant resolution 
$\pi_1 : W_1 \to \C^4 / \langle \alpha \rangle$ of $\C^4 / \langle \alpha \rangle$, 
which is the blow-up of $\C^4 / \langle \alpha \rangle$ at $0$, 
with $\pi_{1}^{-1} (0) = \C \mathbb{P}^3 $. 
The action of $\beta$ lifts to a free anti-holomorphic involution of $W_1$. 
Hence $X_{1} = W_{1} / \langle \beta \rangle$ is a resolution of $\R^8 / \Gamma_{8}$, 
where $\Gamma_{8} = \langle \alpha , \beta \rangle$.

\item[(I$\!$I)] There is another complex structure on $\R^8$, namely, we define 
complex coordinates $(w_1, w_2 , w_3 , w_4)$ on $\R^8$ by 
\begin{equation*} 
 (w_1, w_2 , w_3 , w_4) = (- x_1 + i x_3, 
x_2 + i x_4 , - x_5 + i x_7 , x_6 + i x_8) . 
\end{equation*} 
Then $g_{0} = |dw_{1}|^2 + \cdots + |dw_{4}|^2 $ and 
$\Omega_{0} = \frac{1}{2} \omega_{0}' \wedge \omega_{0}' + 
\text{Re} (\theta_{0}')$. 
Define $\alpha , \, \beta : \C^4 \to \C^4$ by 
\begin{gather*} 
\alpha : (w_1 , w_2 , w_3 ,w_4) \mapsto 
 (\bar{w}_2 , - \bar{w}_1 ,  \bar{w}_4 , - \bar{w}_3), \\ 
 \beta : (w_1 , w_2 , w_3 ,w_4) \mapsto 
 (i w_1 , i w_2 , i w_3 , i w_4) . 
\end{gather*}
We denote by $W_{2}$ the crepant resolution 
$\pi_2 : W_2 \to \C^4 / \langle \beta \rangle$ of $\C^4 / \langle \beta \rangle$, 
which is the blow-up of $\C^4 / \langle \beta \rangle$ at $0$, 
with $\pi_{2}^{-1} (0) = \C \mathbb{P}^3 $. 
The action of $\alpha$ lifts to a free anti-holomorphic 
involution of $W_2$. 
Hence $X_{2} = W_{2} / \langle \alpha \rangle$ is a resolution of 
$\R^8 / \Gamma_{8}$, 
where $\Gamma_{8} = \langle \alpha , \beta \rangle$.  
\end{enumerate}

\subsection{The manifolds}

We glue either $X_{1}$ or $X_{2}$ to each singular point $p_{j} \, (j= 
1, \dots ,k)$ of the $Spin(7)$-orbifold  $Z$ to obtain a compact smooth 8-manifold $M$.

Firstly, we have the following description around the singular points 
of $Z$:   
We denote by $\exp_{p_j} : T_{p_j} Z \to  Z$ the exponential map. 
Then  $\exp_{p_j} \circ \, i_j$ maps $\R^8 / \Gamma_{8}$ to $Z$. 
We take $\zeta$ small and define $U_j \subset Z$ by $U_{j} = \exp_{p_j} 
\circ \, i_j (B_{\zeta} (\R^8 / \Gamma_{8}))$, 
where $B_{\zeta}(\R^8 / \Gamma_{8})$ is the 
open ball of radius $ \zeta$ about $0$. 
We take $\zeta$ small enough so that $U_{j}$ is open in $Z$ and 
$\psi_{j} := \exp_{p_j} \circ \, i_j : B_{\zeta} (\R^8 / \Gamma_{8}) \to
U_{j}$ is 
a diffeomorphism for $j = 1, \dots ,k$, and that $U_{i} \cap U_{j} =
\empty$ for $i \neq j$.

Next, we introduce a scaling parameter $t \in ( 0 , 1]$. 
For each $i=1,2$  
we consider the rescaled ALE $Spin(7)$-manifold $X_{i}^{t} =X_{i}$ with
a $Spin(7)$-structure $(\Omega_{i}^{t} ,g_{i}^{t})$ defined by 
\begin{equation*}
 \Omega_{i}^{t} = t^4 \Omega_{i} , \quad g_{i}^{t} = t^2 g_{i} ,
\end{equation*}
and the projection $\pi_{i}^{t} : X_{i}^{t} \to \R^8 / \Gamma_{8}$ 
given by $\pi_{i}^{t} = t \pi_{i}$. 
Then each 
$(X_{i}^{t} , \Omega_{i}^{t} ,
g_{i}^{t}) \, (i=1,2)$  is an ALE $Spin(7)$-manifold asymptotic to $\R^8 
/ \Gamma_{8}$.

We now 
define  $M_{i}^{t} \, (i= 0,1,2, \dots, k)$ by 
\begin{gather*} 
M_{0}^{t} = Z \setminus \bigcup_{j=1}^{k} \psi_{j} 
( \overline{B_{t^{\frac{5}{6}} \zeta}} 
(\R^8 / \Gamma_{8})) \subset Z , \\ 
 M_{j}^{t} =  (\pi_{n_j}^{t})^{-1} ( B_{t^{\frac{3}{4}} \zeta} (\R^8 /
 \Gamma_{8})) \subset X_{n_{j}} 
\qquad (j=1, 2, \dots ,k), 
\end{gather*} 
where $n_j = 1$ or $2$ for $j=1,2, \dots , k$. 
Now we define a resolution $M= M^{t}$ of $Z$ by 
$\coprod_{j=0}^k M^{t}_{j} /\sim$, where the equivalence relation 
$\sim$ is given by $x \sim y$ if either (a) $x=y$, 
\begin{enumerate} 
\item[(b)] $x \in M_{j}^{t}$ and $y \in U_{j} \cap M_{0}^{t}$, and 
$\psi \circ \pi_{n_j}^{t} (x) = y$ for $j=1, \dots, k$, or  
\item[(c)] $y \in M_{j}^{t}$ and $x \in U_{j} \cap M_{0}^{t}$, and 
$\psi \circ \pi_{n_j}^{t} (y) = x$ for $j=1, \dots, k$. 
\end{enumerate}
Then $M$ is a compact 8-manifold, and $\pi_1 (M) = \Z_{2}$ if 
$n_{j} =1$ for all $j = 1, \dots , k$, or otherwise, i.e.,  if $n_j =2$
for some $j$, $M$ is 
simply-connected.

We define a {\it radius function} on $M^{t}$ as follows: 
Firstly, define a radius function on 
$M_{0}^{t} \subset Z$ to be a function
$\rho_{M_{0}^{t}} :  M_{0}^{t} \to [t^{\frac{5}{6}} , 1]$ such 
that 
$\rho_{M_{0}^{t}} \circ \pi_{j} 
= r $ for $r$ in $[ t^{\frac{5}{6}} ,  \zeta] $ 
and $j = 1, \dots , k $, 
and $\rho_{M_{0}^{t}}$ in $[\zeta , 1]$ on $Z \setminus
\bigcup_{j=1}^{k} U_{j}$. 
Secondly, define a radius function on $M_{j}^{t} \subset 
X_{n_{j}} \, 
(j= 1, \dots , k)$ to be a function 
$\rho_{M_{j}^{t}} : M_{j}^{t} \to [t , t^{\frac{3}{4}}]$ such that 
$\rho_{M_{j}^{t}} = t ( r \circ \pi_{j}^{t})$ for $r$ in $[1 , t^{-
\frac{1}{4}}]$ 
and $t$ for $r <1$. 
Then these functions $\rho_{M_{0}^{t}}, \,  \rho_{M_{j}^{t}}$ 
coincide on each $M_{0}^{t} \cap M_{j}^{t} \, (j = 1 , \dots , k)$, 
hence we get a radius function $\rho : M^{t} \to [t , 1]$ from these.

\subsection{The $\mathbf{Spin(7)}$-structure}

We glue together the torsion-free $Spin(7)$-structures  
$(\Omega_Z ,g_Z)$ on $M_{0}^{t}$ and $(\Omega_{n_{j}}^{t} ,g_{n_j}^{t})$
on $M_{j}^{t} \, (j= 1, \dots ,k)$ by a partition of unity.

Firstly, under the identification of $B_{\zeta} (\R^8 / \Gamma_{8}) $
with $U_{j} \subset Z$ by $\psi_{j}$, there is a smooth three-form 
$\sigma_{j}$ on 
$B_{\zeta} (\R^8 / \Gamma_{8})$ such that 
$\psi_{j}^{*} (\Omega_{Z}) - \Omega_0 
= d \sigma_{j}$ for each $j= 1, \dots ,k$ with $| \nabla^{\ell} 
\sigma_{j} | \leq D_1 r^{3 - \ell} \, ( \ell = 0,1,2)$ on $B_{\zeta} 
(\R^8 / \Gamma_{8})$, where $D_1 >0$ is a constant independent of $t$, 
and $|\cdot| ,\, \nabla$ are 
with respect to the metric $g_{0}$ (\cite{MR1787733} Proposition 15.2.6).

On the other hand, 
there exists a smooth three-form $\tau_{j}^{t}$ on 
$(\R^8 / \Gamma_{8}) \setminus B_{t \zeta} (\R^8 / \Gamma_{8})$ 
such that 
$(\pi_{n}^{t})_{*} (\Omega_{n}^{t}) = \Omega_0 + d \tau_{n}^{t}$ with 
$|\nabla^{\ell} \tau_{n}^{t} | \leq D_2 t^8 r^{-7 - \ell}$ for 
$\ell = 0, 1, 2$ on $(\R^8 / \Gamma_{8}) 
\setminus B_{t \zeta} (\R^8 / \Gamma_{8})$, 
where $D_2 >0$ is a constant independent of $t$, 
and $|\cdot| ,\, \nabla$ are with respect to 
the metric $g_{0}$. This can be proved by using an 
explicit metric by Calabi \cite{MR543218}.

Let $\eta :[ 0 , \infty ) \to [0 ,1]$ be a smooth function with 
$\eta(x) =0$ for $x \leq 1$ and $\eta (x) =1$ for $x \geq 2$. 
We take $t$ small enough that $2 t^{\frac{4}{5}} \leq t^{\frac{3}{4}} $, and 
define a closed four-form $\xi^{t}$ on $M^{t}$ by $\xi^{t} = \Omega_{Z}$ on 
$M_{0}^{t} \setminus \bigcup_{j=1}^{k} M_{j}^{t}$, and 
$\xi^{t} = \Omega_{n_j}^{t}$ 
on $M_{j}^{t} \setminus M_{0}^{t} \, (j = 1 ,\dots ,k)$, and 
\begin{equation*}
 \xi^t = \Omega_{0} + d (\eta (t^{- \frac{4}{5}}r ) \sigma_{j} ) 
+ d ( (1-\eta(t^{- \frac{4}{5}} r)) \tau_{n_j}^{t} ) 
\end{equation*} 
on $M_{0}^{t} \cap M_{j}^{t} \, (j = 1 , \dots ,k)$. 
Here we identify $M_{0}^{t} \cap M_{j}^{t}$ with an annulus in 
$\R^{8} / \Gamma_{8}$.

By using this $\xi^{t}$, we can construct a family of $Spin(7)$-structures 
$(\Omega^{t} , g^{t})$ for small $t$ and the difference $\phi^{t} = 
\xi^{t} -\Omega^t$ can be estimated by 
\begin{equation*} 
  || \phi^{t} ||_{L^2} \leq \lambda t^{\frac{13}{3}} , \quad 
   || d \phi^{t} ||_{L^{10}} \leq \lambda t^{\frac{7}{5}}, 
\end{equation*}  
where $\lambda$ is a constant, as well as 
$ \delta (g^{t}) \geq \mu t$ and $|| R(g^{t}) ||_{C^{0}} \leq \nu t^{-2}$, 
where $\delta (g^{t})$ is the injective radius of $g^{t}$, $R(g^{t})$ is the 
Riemannian curvature, and $\mu, \nu >0$ are constants 
(\cite{MR1787733} Theorem 15.2.13). 
Here all norms are calculated by the metric $g^{t}$ on $M^t$.

Then the existence of torsion-free $Spin(7)$-structures follows from  
\begin{theorem}[\cite{MR1787733} Theorem 13.6.1, Proposition 13.7.1]
Let $\lambda$, $\mu$, $\nu >0$ be constants. Then there exists 
constants $\kappa, K >0$ such that for $0 < t \leq \kappa$ the 
following holds. 
Let $M$ be a compact 8-manifold, and $(\Omega^{t} , g^{t})$ a
 $Spin(7)$-structure 
on $M$. 
Suppose that $\phi^{t}$ is a four-form on $M$ 
with $d \Omega^{t} + d \phi^{t} = 0$, and 
\begin{enumerate}
\item[(i)] $ || \phi^{t} ||_{L^2} \leq \lambda t^{\frac{13}{3}}$ 
   and $|| d \phi^{t} ||_{L^{10}} \leq \lambda t^{\frac{7}{5}}$;  
\item[(ii)] the injectivity radius 
$\delta (g^{t})$ satisfies $ \delta (g^{t}) \geq \mu t$; 
and 
\item[(iii)] the Riemannian curvature $R(g^{t})$ 
satisfies $|| R(g^{t}) ||_{C^{0}} \leq \nu t^{-2}. 
$\end{enumerate} 
Then there exists a smooth torsion-free $Spin(7)$-structure 
$( \tilde{\Omega}^{t} , \tilde{g}^{t})$ on $M$ such that 
$|| \tilde{\Omega}^{t} -\Omega^{t} ||_{C^{0}} \leq K t^{\frac{1}{3}}$ and 
$|| \nabla ( \tilde{\Omega}^{t} - \Omega^{t} ) ||_{L^{10}} \leq K t^{\frac{2}{15}}$.  
\label{th:spinss}
\end{theorem}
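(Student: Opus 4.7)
The plan is to prove this by the standard nonlinear iteration scheme for torsion-free $Spin(7)$-structures. First I would parameterize nearby $Spin(7)$-structures on $M$ analytically. The $Spin(7)$-admissible $4$-forms form an open orbit in a fiberwise nonlinear subvariety of $\Lambda^4 T^*M$, so any such $4$-form sufficiently close to $\Omega^t$ can be written as
\begin{equation*}
\tilde\Omega^t \;=\; \Omega^t + d\eta + F(d\eta),
\end{equation*}
where $\eta$ is a small $3$-form and $F$ is a smooth fiberwise map, quadratic at the origin, restoring the $Spin(7)$-admissibility. Since $\tilde\Omega^t$ is automatically self-dual with respect to its own metric $\tilde g^t$, the torsion-free condition $d\tilde\Omega^t=0$ together with $d\Omega^t = -d\phi^t$ becomes the nonlinear equation
\begin{equation*}
d\,d^*\eta \;=\; -d^*\phi^t \;+\; Q(\eta,\nabla\eta),
\end{equation*}
after imposing the gauge $d^*\eta = 0$, where $Q$ is quadratic in the $1$-jet of $\eta$ and encodes both the nonlinearity of $F$ and the difference between $g^t$ and $\tilde g^t$.

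Next I would set up Hodge theory and Fredholm estimates on $(M,g^t)$ for the linearized operator $\Delta = dd^*+d^*d$ on $3$-forms. On the orthogonal complement of harmonic $3$-forms (which the obstruction $-d^*\phi^t$ eventually meets, up to a controlled nonlinear correction), $\Delta$ admits a Green's operator $G$. The technical heart of the proof is to control $G$ and the relevant Sobolev embeddings uniformly in the gluing parameter $t$, with only mild negative powers of $t$. This is precisely where hypotheses (ii) and (iii) enter: the bounds $\delta(g^t)\ge \mu t$ and $\|R(g^t)\|_{C^0}\le \nu t^{-2}$ yield scale-invariant Sobolev and elliptic regularity inequalities on balls of radius $O(t)$, which can then be patched into global estimates on $(M,g^t)$ with explicit $t$-dependence.

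With those estimates in hand, I would solve the nonlinear equation by Banach fixed point, applied to the map $T(\eta) := -G\,d^*\phi^t + G\,Q(\eta,\nabla\eta)$ on a small ball in a Sobolev space such as $L^{10}_2$ of $3$-forms, starting from $\eta_0 = 0$. Hypothesis (i) is calibrated so that, for $t\le\kappa$, the inhomogeneous term $Gd^*\phi^t$ is of size comparable to $t^{\frac{13}{3}+\varepsilon}$ in the chosen norm, while the quadratic estimate on $Q$ combined with the $t$-weighted Sobolev bounds forces $T$ to be a contraction on a ball of slightly larger radius. Its unique fixed point $\eta$ yields the torsion-free $\tilde\Omega^t$, and the estimates $\|\tilde\Omega^t-\Omega^t\|_{C^0}\le Kt^{\frac{1}{3}}$ and $\|\nabla(\tilde\Omega^t-\Omega^t)\|_{L^{10}}\le Kt^{\frac{2}{15}}$ follow by Sobolev embedding from the norm of $d\eta + F(d\eta)$. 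The main obstacle, and the source of the specific exponents $\tfrac{13}{3},\tfrac{7}{5},\tfrac{1}{3},\tfrac{2}{15}$, is this careful accounting of $t$-dependence on the degenerating family $(M,g^t)$: one must balance the power of $t$ lost in inverting $\Delta$ against the powers gained from the smallness hypotheses and from Sobolev embedding on neck regions of scale $t$.
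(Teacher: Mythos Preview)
The paper does not prove this theorem at all: it is quoted verbatim from Joyce's monograph \cite{MR1787733} (Theorem~13.6.1 and Proposition~13.7.1) and used as a black box to pass from the glued $Spin(7)$-structure $(\Omega^t,g^t)$ to a torsion-free one. So there is no ``paper's own proof'' to compare your proposal against.

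That said, your outline is a faithful sketch of Joyce's method in \cite{MR1787733}. He parameterizes nearby admissible $4$-forms by a nonlinear projection $\Theta$, writes the closedness condition as a nonlinear elliptic equation for a $3$-form potential, derives $t$-uniform elliptic and Sobolev estimates from the injectivity-radius and curvature bounds (ii)--(iii), and closes with a contraction-mapping argument whose input is precisely the smallness hypothesis (i). Your description of the mechanism and of where each hypothesis enters is correct; the precise form of the reduced PDE and the choice of function spaces differ in detail from what you wrote (Joyce works with the Laplacian on $4$-forms and a carefully chosen gauge, and uses $L^2$--$L^{10}$ interpolation rather than $L^{10}_2$ directly), but these are implementation choices rather than conceptual differences. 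If you want to turn your sketch into an actual proof, you should follow \cite{MR1787733}, Chapter~13, where the bookkeeping of the exponents $\tfrac{13}{3},\tfrac{7}{5},\tfrac{1}{3},\tfrac{2}{15}$ is carried out in full.
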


By this theorem, we can deform the $Spin(7)$-structure 
$(\Omega^{t}, g^{t})$ above to a torsion-free $Spin(7)$-structure 
$( \tilde{\Omega}^{t} , \tilde{g}^{t})$ on $M$ for $t$ sufficiently small. 
Theorem 2.2 then shows that $\text{Hol} (\tilde{g}^t) = Spin(7)$
provided $\pi_1 (M^{t}) =0$ and $\hat{A} (M^t) =1$.

\section{$\mathbf{Spin(7)}$-instantons}

In Section 3.1, we introduce the $Spin(7)$-instanton equation, and
describe its relation to the complex ASD equation 
and the Hermitian--Einstein equation. 
In Section 3.2, we describe the linearization of the $Spin(7)$-instanton 
equation and the Hermitian--Einstein equation.

\subsection{$\mathbf{Spin(7)}$-instantons}

Let $M$ be a $Spin(7)$-manifold. 
Then the space of two-forms $\Lambda^2 $ on $M$ splits as 
\begin{equation*} 
 \Lambda^2 = \Lambda^2_{21} \oplus \Lambda^2_{7} , 
\end{equation*} 
where $\Lambda^{2}_{21}$ is a rank 21 vector bundle which  
corresponds to the Lie algebra of $Spin(7)$ under 
the identification of $\Lambda^2$ with the Lie algebra of $SO(8)$,  
and $\Lambda^{2}_{7}$ is a rank 7 vector bundle which is 
orthogonal to $\Lambda^{2}_{21}$. 
Alternatively, if we consider the operator on $\Lambda^2$ defined by 
$\alpha \mapsto * (\Omega \wedge \alpha)$, then it is self-adjoint with 
eigenvalues $-1$ and $3$, and its eigenspaces are $\Lambda_{21}^{2}$  
and $\Lambda_{7}^{2}$ respectively.

Let $P$ be a principal bundle on $M$ with the structure group $G$.  
We denote by $\text{Ad}(P)$ the adjoint vector bundle associated with $P$. 
The space of $\text{Ad}(P)$-valued 
2-forms is also decomposed as 
\begin{equation*} 
 \Omega^2 (\text{Ad}(P))= \Omega^2_{21} (\text{Ad}(P)) \oplus \Omega^2_{7} 
 (\text{Ad}(P)). 
\end{equation*}

We call a connection $A$ on $P$ a {\it $Spin(7)$-instanton} if $A$
satisfies the following equation:   
\begin{equation} 
 \pi_{7}^{2} (F_{A}) =0 , 
\label{eq:ssinst}
\end{equation}
where $F_{A}$ is the curvature of $A$, and 
$\pi_{7}^{2}$ is the projection to the $\Omega^2_7 
(\text{Ad}(P))$ component. 
Equation \eqref{eq:ssinst} together with a gauge fixing condition 
form an elliptic system. 
Note that the projection $\pi_{7}^{2} : \Omega^{2} (\text{Ad} (P)) \to 
\Omega_{7}^{2} (\text{Ad} (P))$ can be written as 
\begin{equation} 
\alpha \mapsto 
\frac{1}{4} (* \Omega \wedge \alpha + \alpha ) 
\label{projomega} 
\end{equation} 
for $\alpha \in \Omega^{2} (\text{Ad} (P))$.

\paragraph{Complex ASD. }

Let $M$ be a compact Calabi--Yau four-fold with K\"{a}hler form 
$\omega$ 
and holomorphic $(4,0)$-form $\theta$. 
We assume the normalization condition  
$ \theta \wedge \bar{\theta} = \frac{16}{4!} \omega^4$  on $\omega$ and $\theta$. 
A Calabi--Yau four-fold is a $Spin(7)$-manifold as $SU(4) \subset 
Spin(7)$, and the $Spin(7)$-structure $\Omega$ is given by $\Omega = 
\frac{1}{2} \omega^2 + \text{Re} (\theta)$. 
Let $E$ be a Hermitian vector bundle over $M$.

In general, if the underlying manifold is K\"{a}hler, then 
we have the following decomposition of the space of complexified two 
forms:  
\begin{equation*} 
 \Lambda^2 \otimes \C = \Lambda^{1,1} \oplus \Lambda^{2,0} 
\oplus \Lambda^{0,2}, 
\end{equation*}
and $\Lambda^{1,1}$ further decomposes into $ \C \langle \omega \rangle \oplus \Lambda_{0}^{1,1}$.

In the case where the underlying K\"{a}hler manifold $M$ is a
Calabi--Yau four-fold,  
we define {\it the complex Hodge operator} 
$ *_{\theta} : \Lambda^{0,2} \to \Lambda^{0,2} $ 
by 
$$  \phi  \wedge *_{\theta} \psi 
 = \langle \phi , \psi \rangle  \bar{\theta} , \qquad \phi , \psi \in 
 \Lambda^{0,2} . $$ 
Then $*_{\theta}^2 =1$, and the space of 
$(0,2)$-forms further decomposes into  
\begin{equation*}
 \Lambda^{0,2} = \Lambda^{0,2}_{+} \oplus \Lambda_{-}^{0,2}, 
\end{equation*}
where 
\begin{gather*}
 \Lambda^{0,2}_{+} 
= \{ \phi \in \Lambda^{0,2} \, : \, *_{\theta} \phi = \phi \} , \\
 \Lambda^{0,2}_{-} 
= \{ \phi \in \Lambda^{0,2} \, : \, *_{\theta} \phi = - \phi \} . 
\end{gather*}
Note that the operator $*_{\theta}$ is an anti-holomorphic map, hence 
$\Lambda^{0,2}_{+}$ and $\Lambda^{0,2}_{-}$ are real subspaces of 
$\Lambda^{0,2}$. 
We obtain 
\begin{gather*}
 \Lambda^{2}_{21} = \Lambda^2 \cap ( \Lambda_{0}^{1,1}  
\oplus 
\Lambda^{0,2}_{-} \oplus \Lambda^{2,0}_{-}), \\
 \Lambda^{2}_{7} = \R \langle \omega \rangle 
\oplus (  \Lambda^{2} \cap (\Lambda^{0,2}_{+} \oplus 
 \Lambda^{2,0}_{+} ) ). 
\end{gather*} 
Hence, the $Spin(7)$-instanton equation on a Calabi--Yau four-fold can
be written as 
\begin{equation*}
(1 + *_{\theta}) F_{A}^{0,2} = 0 , \quad 
\Lambda F_{A}^{1,1}  = 0. 
\end{equation*}
These are called \textit{complex 
anti-self-dual equations} \cite{DT}.

\paragraph{Hermitian--Einstein connections.}

Hermitian--Einstein connections also give examples of 
$Spin(7)$-instantons.

Let $X$ be a compact K\"{a}hler manifold of complex dimension $n$ with 
K\"{a}hler form $\omega$, and $E$ a Hermitian vector bundle over $X$ 
with Hermitian metric $h$.

A metric-preserving connection $A$ of $E$ is said to be a 
\textit{Hermitian--Einstein connection} if $A$ satisfies the following 
equations: 
\begin{equation*} 
 F_{A}^{0,2} =0, \quad 
\Lambda F_{A}^{1 ,1}   = \lambda(E) Id_{E}, 
\end{equation*}
where $\Lambda := (\wedge \omega)^{*}$, and 
$\lambda (E)$ is defined by 
$ \lambda (E) := 
\frac{n ( c_1(E) \cdot [\omega]^{n-1} )}{r [\omega]^{n}}$.

If $E$ is a unitary vector bundle with $c_1 (E) \cdot [\omega]^3 =0$ 
over a K\"{a}hler manifold with 
holonomy contained in $SU(4)$, then $\lambda (E) =0$ and $F_{A} 
\in \Lambda_{0}^{1,1}$, and  Hermitian--Einstein connections 
are $Spin(7)$-instantons, since $\Omega^2 \cap \Omega_{0}^{1,1} 
\subset \Omega^{2}_{21}$ as described above.

\subsection{Linearizations}

The infinitesimal deformation of $Spin(7)$-instantons was studied by 
Reyes Carri\'{o}n \cite{Reyes_Carrion98}, and it is given 
by the following 3-term complex: 
\begin{equation} 
0 \longrightarrow \Omega^{0} (\mathfrak{u} (E)) 
\xrightarrow{ \, \, \, \, \, d_{A}  \, \, \, \, \,} 
\Omega^{1} (\mathfrak{u} (E)) 
\xrightarrow{ \, \, \, \, \, d_{A}^{7}  \, \, \, \, \, } 
\Omega^{2}_{7} (\mathfrak{u} (E)) 
\longrightarrow 0, 
\label{eq:complex} 
\end{equation} 
where $d_{A}^{7} := \pi^{2}_{7} \circ d_{A}$. 
This complex is elliptic \cite{Reyes_Carrion98}, hence 
\begin{equation} 
 L_{A} := (d_{A}^{7} , d_{A}^{*}) : \Omega^{1} (\mathfrak{u} (E)) \to 
\Omega^{0} (\mathfrak{u} (E)) \oplus \Omega^{2}_{7} (\mathfrak{u} (E)) 
\label{eq:lineop}
\end{equation}
is an elliptic operator. 
The local model of the moduli space of $Spin(7)$-instantons 
is described in Lewis' thesis \cite{Lewis}.  
The operator $L_{A}$ is the twisted Dirac operator 
between the Spin bundles twisted by 
$\mathfrak{u} (E)$: 
\begin{equation}
 S^{+}= \Omega^0 (\mathfrak{u} (E)) \oplus \Omega^{2}_{7}  (\mathfrak{u} (E)), 
\quad S^{-} = \Omega^1  (\mathfrak{u} (E)).  
\label{eq:spinor}
\end{equation}
Hence, the index of the complex \eqref{eq:complex} can be calculated 
by the Atiyah--Singer Index Theorem, it is $\langle \hat{A}
(M)  \, \text{ch}  (\mathfrak{u}  (E)), [M] \rangle$, and 
when $M$ is a compact 8-manifold with holonomy $Spin(7)$, 
$\text{Ind} (L_{A})$ turns out to be 
\begin{equation}
 \begin{split}
 \text{Ind} &(L_{A}) 
 = - r^2 - \left\langle - \frac{ p_1 (M)}{24} 
\left( -c_1 (E)^2 + r \left( c_{1} (E)^2 - 2 c_2 (E) \right) \right) 
\right. \\ 
 &+ \frac{r}{12} \left( 
 c_1 (E)^4 - 4 c_1 (E)^2 c_2 (E) + 2 c_2 (E)^2 + 4 c_1 (E) c_3 (E) 
 - 4 c_4 (E) \right) \\
 & \qquad \qquad \qquad  \left. - \frac{1}{12} c_1 (E)^4 - c_1 (E) c_3 (E) 
 + c_2 (E)^2 , \, [M] \right\rangle , \\ 
 \end{split}
\label{eq:index}
\end{equation}
where we used the fact that 
$ \left\langle -4 p_{2}(M) + 7 p_1 (M)^2  
 , \, [M] \right\rangle = 5760$ if $M$ has holonomy  $Spin(7)$. 

If $E$ is an $SU(r)$ bundle, rather than a $U(r)$ bundle, 
then we replace $\mathfrak{u}(E)$ by $\mathfrak{su}(E)$ 
in \eqref{eq:complex}, \eqref{eq:lineop}, and \eqref{eq:spinor}, 
and the first term $r^2$ in
\eqref{eq:index} is replaced by $r^2 -1$. 
In particular, 
if we take $E$ to be an $SU(2)$ bundle, then  \eqref{eq:index} becomes  
\begin{equation}
 \text{Ind} (L_{A}) 
= - 3 - \frac{1}{6} 
\left\langle p_1 (M) c_2 (E) + 8 c_2 
 (E)^2 , \, [M] \right\rangle . 
\label{eq:vdim}
\end{equation}

\paragraph{Infinitesimal deformation of Hermitian--Einstein connections. }

Let $X$ be a K\"{a}hler four-fold 
with K\"{a}hler form $\omega$, $E$ a Hermitian vector bundle over $X$ 
with Hermitian metric $h$.

The infinitesimal deformation of a Hermitian--Einstein 
connection $A$ of $E$ 
was studied by Kim \cite{MR888136} (see also \cite{Kobayashi87}), 
and it is described by the following complex:  
\begin{equation} 
\begin{split} 
 0 \longrightarrow \Omega^{0} ( X, &\mathfrak{u}(E)) 
 \xrightarrow{ \, \, \, \, \,  d_{A} \, \, \, \, \,  } 
 \Omega^1 ( X, \mathfrak{u}(E)) 
 \xrightarrow{ \, \, \, \, \, d_{A}^{+}  \, \, \, \, \,} 
 \Omega^{+} (X,  \mathfrak{u}(E))  \\
  & \qquad 
  \xrightarrow{ \, \, \, \, \, \bar{D}_{A}'  \, \, \, \, \,} 
A^{0,3} ( X, \mathfrak{u}(E)) 
 \xrightarrow{  \, \, \, \, \, \bar{D}_{A}  \, \, \, \, \, } 
A^{0,4} (X, \mathfrak{u}(E))  
\longrightarrow 0 \\ 
\end{split}
\label{defHE}
\end{equation}
where 
\begin{equation*} 
  A^{0,q} (X ,\mathfrak{u}(E)) 
:= C^{\infty} (\mathfrak{u} (E) \otimes A^{0,q} ), 
\end{equation*} 
$\mathfrak{u}(E) = \text{End} (E, h)$ is the bundle of 
skew-Hermitian endmorphisms of $E$, 
$A^{0,p}$ is the space of real 
$(0,p)$-forms (see \cite{MR1004008} pp. 32--33) over $X$, defined by 
\begin{equation*}
 A^{0,p} \otimes _{\R} \C =\Lambda^{0,p}  \oplus \Lambda^{p,0} , 
\end{equation*}
\begin{equation*}
 \begin{split}
 \Omega^{+} (X ,\mathfrak{u}(E)) 
&:= A^{0,2} (X ,\mathfrak{u}(E)) 
  \oplus \Omega^0 (X ,\mathfrak{u}(E)) \omega \\
&= \{ \phi + \bar{\phi} + f \omega \, : \, 
\phi \in \Omega^{0,2} (X ,\mathfrak{u}(E)) ,\, f \in \Omega^{0} (X ,\mathfrak{u}(E))  \} ,
 \end{split}
\end{equation*} 
$\bar{D}_{A} : A^{0,p} (X , \mathfrak{u} (E)) \to  A^{0, p+1} (X , 
\mathfrak{u}(E))$ is defined by $\bar{D}_{A} \alpha = \bar{\partial}_{A} 
\alpha^{0,p} + \partial_{A} \overline{\alpha^{0,p}}$ for $\alpha = \alpha^{0,p} +
\overline{\alpha^{0,p}}$, where $\alpha^{0,p} \in \Omega^{0,p} (X ,
\mathfrak{u} (E))$, and 
$$d_{A}^{+} := \pi^{+} \circ d_{A}, \quad \bar{D}_{A} ' := \bar{D}_{A} 
\circ 
\pi^{0,2} ,$$ 
where $\pi^{+} , \pi^{0,2}$ are respectively the orthogonal 
projections from $\Omega^2 $ to 
$\Omega^{+} , A^{0,2}$. 
As described in \cite{MR888136} (see also \cite{Kobayashi87}), 
the complex \eqref{defHE} has the associated Dolbeault complex: 
\begin{equation*}
\begin{CD}
0 @>>> \Omega^0 @>d_{A}>> \Omega^1 @>d_{A}^{+}>>\Omega^{+} @>\bar{D}_{A}'>>  
A^{0,3} @>\bar{D}_{A}>> A^{0,4} @>>> 0 \\ 
@. @VVj_0V @VVj_1V @VVj_2V @VVj_3V @VVj_4V \\ 
0 @>>> \Omega^{0,0} @>\bar{\partial}_{A}>> \Omega^{0,1} 
 @>\bar{\partial}_{A}>>  
\Omega^{0,2}  
@>\bar{\partial}_{A}>> \Omega^{0,3} @>\bar{\partial}_{A}>> 
\Omega^{0,4} @>>> 0 , 
\end{CD} 
\end{equation*}
where $j_{0}$ is injective, $j_1$ is bijective, $j_2$ is surjective with  
the kernel $\{ \beta \omega \, : \,  \beta \in \Omega^{0} \}$, and 
$j_{3}, j_{4}$ are bijective. 
We denote the $i$-th cohomology of the complex \eqref{defHE} by $H^{i} 
\, (i = 0, 1, \dots , 4)$.  
Kim \cite{MR888136} proved that 
\begin{gather*} 
 H^{0} \otimes \C \cong H^{0,0}, \, H^{1} \cong H^{0,1} , \, 
H^{2} \cong H^{0} \oplus H^{0,2} , \\ 
H^{3} \cong  H^{0,3 }  , \, H^{4} \cong H^{0,4} . 
\end{gather*}
In particular, if an $SU(r)$ bundle $E$ is irreducible, and $H^{0,2}=0$,  
then the linearized operator $L_{A}= ( d_{A}^{+} ,d_{A}^{*}) : 
\Omega^{1} (\mathfrak{su} (E)) \to 
\Omega^{0} (\mathfrak{su} (E)) \oplus \Omega^{2}_{7} (\mathfrak{su} (E)) $ is surjective.

\section{Approximate solution and the estimate}

In this section, we construct an approximate solution to the 
$Spin(7)$-instanton equation on a 
vector bundle over the $Spin(7)$-manifold $M=M^t$ of Section 2.  
The ingredients are 
Hermitian--Einstein connections on vector bundles 
over 
the Calabi--Yau four-orbifold $Z$ and the ALE spaces   
$W_{n_{j}}$'s. 
We also prove an estimate on the approximate solution needed 
in the later section.

\subsection{Ingredients for the construction}

We take a complex  
vector bundle $E_{0}^{j}$ of rank $r$ over $(\R^8 / \Gamma_8 ) 
\setminus 0$ with a flat $U(r)$-connection $A_{0}^{j}$ 
for each $j = 1 , \dots , k$. 
Then ingredients for the construction consist of 
\begin{enumerate}
\item[(A)] A complex orbifold vector bundle $E_{Z} = E_{Y} / 
\langle \sigma \rangle$ of rank $r$ 
over $Z= Y / \langle \sigma \rangle$, 
which is isomorphic to $E_{0}^{j}$ near each singular point 
$p_{j} \in Z\, (j= 1 ,2, \dots ,k $), 
where $E_Y$ is a $\langle \sigma \rangle$-equivariant holomorphic  
	   orbifold vector bundle over $Y$, 
equipped with a $\langle \sigma \rangle$-equivariant 
Hermitian--Einstein connection $A_{Y}$,  
and the connection $A_{Z}$ on $E_{Z}$ induced by $A_{Y}$ is 
asymptotic to the flat connection $A_{0}^{j}$ 
with the decay rate 
$A_{Z} \sim  A_{0}^{j} + O (r)$ 
and $\nabla^{\ell} (A_Z - A_{0}^{j}) \sim O (1)$ for all $\ell > 0$  
at each 
$p_{j} \in Z \, (j= 1 ,2, \dots ,k $). 
\item[(B)] A complex vector bundle $E_{X_{n_j}} = 
E_{W_{n_j}} /\langle \sigma \rangle$ 
of rank $r$ over each
	   $X_{n_j}$  $(j= 1, \dots ,k)$, 
which is isomorphic to $E_{0}^{j}$ near $\infty$, 
where $E_{W_{n_j}}$ is a $\langle \sigma \rangle$-equivariant
	   holomorphic 
	   vector bundle over $W_{n_j}$, 
equipped with a 
$\langle \sigma \rangle$-equivariant 
Hermitian--Einstein connection 
$A_{W_{n_j}}$, and the connection $A_{X_{n_j}}$ on $E_{X_{n_j}}$ 
induced by 
$A_{W_{n_j}}$  is  asymptotic to the flat connection $A_{0}^{j}$ 
with the decay rate $ A_{X_{n_j}} \sim  A_{0}^{j} + O(r^{-7})$ 
and $\nabla^{\ell} (A_{X_{n_{j}}} - A_{0}^{j}) \sim O ( r^{-7 - \ell })$
	   for all $\ell > 0 $  at infinity. 
\end{enumerate} 
We also assume that the cokernel of $L_{A_{Z}}$ lies in  
$C^{\infty} (\mathfrak{u}(E_{Z}) \otimes \Lambda^{0} (Z))$, 
namely, the cohomology $H^{2} (Z , \mathfrak{u} (E))$ of the complex 
\eqref{eq:complex} vanishes, but $H^{0} (Z , \mathfrak{u}(E))$ of the
complex 
\eqref{eq:complex} does not necessarily vanish,  
and $ L_{A_{X_{n_j}}} : L_{1,
\delta}^{4} (\mathfrak{u} (E_{X_{n_{j}}}) \otimes \Lambda^{1} (X_{n_{j}}) ) 
\to L_{\delta -1}^{4}  
(\mathfrak{u} (E_{X_{n_{j}}}) \otimes ( \Lambda^{0} (X_{n_{j}}) \oplus \Lambda_{7}^{2} 
(X_{n_{j}}) ))
\, (j= 1, 2, \dots , k)$ 
for $\delta \in ( -7, 0)$ 
is surjective, 
where $L_{1,
\delta}^{4} (\mathfrak{u} (E_{X_{n_{j}}}) \otimes  \Lambda^{1}
(X_{n_{j}}) ) 
$ and $L_{\delta -1}^{4}
(\mathfrak{u} (E_{X_{n_{j}}}) 
\otimes ( \Lambda^{0} (X_{n_{j}}) \oplus \Lambda_{7}^{2} 
(X_{n_{j}}) ))$ are weighted Sobolev spaces with the
weights $\delta, \delta-1$ (See Section 5.1 for more detail 
about the weighted Sobolev spaces). 

Note that we do not assume $E_{Z}$ or $E_{X_{n_j}} \, (j = 1 , \dots ,
k)$ to be irreducible, for instance, $E_{Z}$ can be trivial. 
In fact, even if $E_Z$ and $E_{X_{n_j}}$ are reducible, one can
construct irreducible $Spin(7)$-instantons, provided that the
intersection of the symmetry groups of $E_{Z}, E_{X_{n_j}} \, (j = 1,
\dots , k)$ is the multiples of the identity.  

Also notice that for both $E_Y$ and $E_{W_{n_j}}$, the constant $\lambda
(E)$ in Section 3.1 are zero, since $E_Y$ and $E_{W_{n_{j}}}$ are
assumed to be $\sigma$-equivariant, so $\lambda (E) = \frac{4 \, c_{1} (E)
\cdot [\omega]^3}{r [\omega]^4}$ changes sign under the action of
$\sigma$. 
Therefore $A_{Y}$ and $A_{W_{n_j}}$ are $Spin(7)$-instantons, 
not just Hermitian--Einstein connections, and  $A_{Z}$ and
$A_{X_{n_j}}$ are $Spin(7)$-instantons.

\subsection{Approximate solution}

We identify a small ball around each $p_j \, \, (j= 1, \dots , k)$ in 
$Z$ with a small ball in $\R^8 / \Gamma_{8}$, and identify $E_Z$ with 
$E_{j}^{0}$ over the balls. Similarly, we identify the complement of a 
large ball around the origin of $X_{n_{j}}$ with the complement of a 
large ball around the origin of $(\R^8 / \Gamma_{8}) \setminus 0$, and 
identify $E_{X_{n_j}}$ with $E_{0}^{j}$ over those complements for each 
$j = 1, \dots , k$.  
We then glue $E_Z$ and $E_{X_{n_j}} \, ( j= 1, \dots , k)$ together by the 
above identifications, 
namely, 
$E_{Z} |_{M_{0}^{t}}$ is identified with $E_{X_{n_j}}|_{M_{j}^{t}}$ by 
$E_{Z} |_{M_{0}^{t} \cap M_{t}^{j}} \cong E_{0}^{j} |_{\text{annulus}} 
\cong E_{X_{n_{j}}} |_{M_{0}^{t} \cap M_{j}^{t}}$ on $M_{0}^{t} \cap 
M_{j}^{t}$ for each $j = 1, \dots , k$. 
We denote by $E$ the resulting vector bundle 
over $M$.

Next, we consider a smooth function $\chi : \R \to [0,1] $ with 
$\chi (x) = 0$ for $x \leq \frac{3}{4}$ and $\chi (x) = 1$ for $x \geq 
\frac{5}{6} $. 
We define $\chi_{t}^{j} (\rho)$ on $M_{0}^{t} \cap M_{j}^{t}$ by 
\begin{equation*}
 \chi_{t}^{j} (\rho) := \chi \left (\frac{\log \rho}{\log t} \right) , 
\end{equation*} 
where $\rho$ is the radius function defined in Section 2.3.  
Then we have 
\begin{equation*}
 \chi_{t}^{j} (\rho) = 
 \begin{cases}
 1 & \qquad ( \rho \leq t^{\frac{5}{6}}) ,\\ 
 0 & \qquad ( \rho \geq t^{\frac{3}{4}}) .
  \end{cases} 
\end{equation*}

We now define a connection $A_t$ on $E$ by 
$A_t = A_{Z}$ on $M_{0}^{t} \setminus \bigcup_{j=1}^{k} 
M_{j}^{t}$, $A_t = t^{*} (A_{X_{n_{j}}})$ on $M_{j}^{t} \setminus 
M_{0}^{t}$, and 
\begin{equation*} 
\begin{split}
 A_t &= \chi_{t}^{j} t^{*}(A_{X_{n_j}}) +  (1 - \chi_{t}^{j} ) A_{Z}  \\
 &= 
A_{0}^{j} + \chi_{t}^{j} t^{*} (A_{X_{n_j}} - A_{0}^{j}) 
+ (1 - \chi_{t}^{j} ) (A_{Z} - A_{0}^{j} ) 
\end{split}
\end{equation*} 
on $M_{0}^{t} \cap M_{j}^{t}$ for $j= 1, \dots ,k $.

\subsection{Estimate on the error}

In this section, we prove an estimate on the approximate solution 
(Proposition \ref{prop:error}).  
Since the $Spin(7)$-manifold $M=M^t$ depends on the parameter $t$ from  
the rescaling around the singular points, 
we use {\it scale-invariant norms} such as the 
$L^8$-norm for one-forms and $L^4$-norms for two-forms  
to obtain $t$-independent estimates.

\begin{proposition} 
Let $A$ be the approximate solution in Section 4.2. 
Then there exists a constant $C_1 >0$ independent 
 of $t$ such that 
\begin{equation*} 
 || \tilde{\pi}_{7}^2 (F_{A_t}) ||_{L^4} 
\leq  C_1 t^{\frac{1}{3}},  
\end{equation*} 
where $\tilde{\pi}^2_7$ is the projection with respect to the
 torsion-free 
$Spin(7)$-structure $\tilde{\Omega}^{t}$. 
\label{prop:error}
\end{proposition}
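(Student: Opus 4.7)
The plan is to decompose $M=M^t$ into three regions matching the construction of $A_t$ in Section 4.2 and estimate $\tilde{\pi}_7^2(F_{A_t})$ in $L^4$ on each: the bulk orbifold piece $M_0^t \setminus \bigcup_j M_j^t$, the bulk ALE pieces $M_j^t \setminus M_0^t$, and the annular transition regions $M_0^t \cap M_j^t$. On both bulk pieces, $A_t$ equals $A_Z$ (respectively $t^{*}A_{X_{n_j}}$), which by the discussion at the end of Section 4.1 is a genuine $Spin(7)$-instanton with respect to the \emph{reference} $Spin(7)$-structure $\Omega_Z$ (respectively $\Omega_{n_j}^t$). Crucially, the approximate structure $\Omega^t$ agrees with this reference on these pieces, so only the perturbation to the torsion-free structure contributes: writing $\pi_{7,\mathrm{ref}}^{2}$ for the projection with respect to the reference $Spin(7)$-structure and using the pointwise formula \eqref{projomega}, one has
\begin{equation*}
\tilde{\pi}_7^2(F_{A_t}) = \bigl(\tilde{\pi}_7^2 - \pi_{7,\mathrm{ref}}^2\bigr)(F_{A_t}), \qquad \bigl|\tilde{\pi}_7^2 - \pi_{7,\mathrm{ref}}^2\bigr| \leq C\,|\tilde{\Omega}^t - \Omega^t|.
\end{equation*}
Theorem~\ref{th:spinss} gives $\|\tilde{\Omega}^t - \Omega^t\|_{C^0}\leq Kt^{1/3}$, and the $L^4$-norm of a $2$-form on an $8$-manifold is scale invariant, so $\|F_{A_{X_{n_j}}}\|_{L^4(g^t)}$ equals the corresponding unrescaled $L^4$-norm, which is finite by the decay $|F_{A_{X_{n_j}}}|=O(r^{-8})$. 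Thus the contribution from (i) and (ii) is bounded by $Ct^{1/3}$.

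On the annular region $t^{5/6}\lesssim \rho\lesssim t^{3/4}$, direct computation with $a_X := A_{X_{n_j}} - A_0^j$ and $a_Z := A_Z - A_0^j$ gives
\begin{equation*}
F_{A_t} = \chi_t^j F_{A_{X_{n_j}}} + (1-\chi_t^j) F_{A_Z} + d\chi_t^j \wedge (a_X - a_Z) - \tfrac{1}{2}\chi_t^j(1-\chi_t^j)[a_X - a_Z,\, a_X - a_Z].
\end{equation*}
Using $|a_Z|_{g^t}=O(\rho)$, the pulled-back, rescaled ALE decay $|a_X|_{g^t}=O(t^6\rho^{-7})$, and $|d\chi_t^j|_{g^t}=O(\rho^{-1}|\log t|^{-1})$ (from $\chi_t^j=\chi(\log\rho/\log t)$), the two gluing terms contribute
\begin{equation*}
\|d\chi_t^j\wedge(a_X-a_Z)\|_{L^4} = O\!\left(\tfrac{t}{|\log t|}\right), \qquad \|\chi_t^j(1-\chi_t^j)[a_X-a_Z,a_X-a_Z]\|_{L^4} = O(t^2),
\end{equation*}
by a radial integral on the annular shell (the $L^4$-scale invariance again absorbs the $t$-scaling cleanly). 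For the two curvature terms in the decomposition, $\chi_t^j F_{A_{X_{n_j}}}$ and $(1-\chi_t^j) F_{A_Z}$ are annihilated by the projection with respect to their respective reference structures; the error from replacing those reference projections by $\tilde{\pi}_7^2$ is again controlled by $|\tilde{\Omega}^t - \Omega^t|\cdot|F|$ together with terms $d(\eta(\sigma_j - \tau_{n_j}^t))$ measuring $|\Omega^t - \Omega_{\mathrm{ref}}|$, all of which feed into an $L^4$-estimate of order $O(t^{1/3})$ or better.

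Summing the three regional estimates gives the required $\|\tilde{\pi}_7^2(F_{A_t})\|_{L^4}\leq C_1 t^{1/3}$. The main obstacle is the bookkeeping on the annular region, where one must simultaneously track three independent sources of error: the global perturbation $\tilde{\Omega}^t - \Omega^t$ supplied by Theorem~\ref{th:spinss}, the local mismatch between $\Omega^t$ and each of $\Omega_Z$, $\Omega_{n_j}^t$ encoded in the three-forms $\sigma_j$ and $\tau_{n_j}^t$ from Section 2.4, and the commutator/cutoff terms forced by the interpolation defining $A_t$. The precise choice of gluing radii $t^{5/6}$ and $t^{3/4}$ together with scale invariance of the $L^4$-norm on $2$-forms in dimension $8$ ensures that the annular contributions are of strictly better order than $t^{1/3}$, so the $Kt^{1/3}$ coming from $\tilde{\Omega}^t - \Omega^t$ on the bulk pieces is the dominant term.
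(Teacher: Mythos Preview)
Your argument is correct and follows essentially the same strategy as the paper: compare $\tilde\pi_7^2$ to a reference projection via \eqref{projomega}, so that the error is controlled by $|\tilde\Omega^t-\Omega^t|\cdot|F_{A_t}|$, and then estimate the curvature on the annulus. Two minor differences are worth noting. First, you pair $\|\tilde\Omega^t-\Omega^t\|_{C^0}$ with the scale-invariant $\|F_{A_t}\|_{L^4}$, whereas the paper uses H\"older as $\|\tilde\Omega^t-\Omega^t\|_{L^8}\cdot\|F_{A_t}\|_{L^8}$ (Lemma~4.2 gives $\|\tilde\Omega^t-\Omega^t\|_{L^8}\le Ct^{4/3}$ by interpolation, and $\|F_{A_t}\|_{L^8}=O(t^{-1})$); your choice is arguably cleaner since $\|F_{A_t}\|_{L^4}$ is uniformly bounded without further computation. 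Second, on the annulus you expand $F_{A_t}$ into four pieces and track the mismatch $|\Omega^t-\Omega_{\mathrm{ref}}|$ separately; the paper instead passes through the glued (non-torsion-free) structure $\Omega^t$, so that $\pi_7^2(F_{A_t})$ vanishes identically off the annulus and is bounded on the annulus by the crude $|\pi_7^2(F_{A_t})|\le|F_{A_t}|$, yielding $\|\pi_7^2(F_{A_t})\|_{L^4}=O(t)$ directly. The paper's route avoids having to chase the $\sigma_j,\tau_{n_j}^t,\phi^t$ terms you allude to, but both lead to the same $O(t^{1/3})$.
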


\begin{proof}
From \eqref{projomega}, we have 
\begin{equation*}
 | \tilde{\pi}^2_7 (F_{A_t}) |_{g^t} \leq 
| \pi^2_7 (F_{A_t}) |_{g^t}  
+ C | \tilde{\Omega}^{t} - \Omega^{t} |_{g^t} | F_{A_t} |_{g^t}, 
\end{equation*} 
where $\pi^2_7$ is the projection with respect to the
$Spin(7)$-structure $\Omega^{t}$ 
and $| \cdot |_{g^t}$ is a point-wise norm with respect to the 
metric $g^t$.  
Hence, raising to the fourth power and using the H\"{o}lder inequality, 
we obtain 
\begin{equation}
 || \tilde{\pi}^2_7 (F_{A_t}) ||_{L^4} \leq 
|| \pi^2_7 (F_{A_{t}}) ||_{L^4}  
+ C || \tilde{\Omega}^{t} - \Omega^{t} ||_{L^8} || F_{A_t} ||_{L^8},   
\label{eq:error}
\end{equation}
where $L^p$ norms are taken by the metric $g^t$.

We will prove  Proposition \ref{prop:error} 
by estimating each term in the right-hand-side of \eqref{eq:error}.

\begin{lemma}
\begin{equation}
 || \tilde{\Omega}^{t} -  \Omega^{t}  ||_{L^8} \leq C t^{\frac{4}{3}} .
\label{ineq:o8}
\end{equation} 
\end{lemma}

\begin{proof}
From Proposition 13.7.1 in Chapter 13 of \cite{MR1787733}, 
\begin{gather}
 || \tilde{\Omega}^{t} -  \Omega^{t} ||_{L^2} 
\leq  C t^{\frac{13}{3}} ,\\
  || \tilde{\Omega}^{t} -  \Omega^{t} ||_{C^0} \leq C  
 t^{\frac{1}{3}}.
 \label{ineq:o0} 
\end{gather}
Hence, for $p>2$ we have 
\begin{equation*}
  || \tilde{\Omega}^{t} -  \Omega^{t} ||_{L^p}
  \leq  
   || \tilde{\Omega}^{t} -  \Omega^{t} ||_{C^0}^{\frac{p-2}{p}} \, 
   || \tilde{\Omega}^{t} -  \Omega^{t} ||_{L^2}^{\frac{2}{p}}  
  \leq C t^{\frac{p-2}{3p}} 
  = C  t^{\frac{24 + p}{3p}} . 
\end{equation*}
In particular, $|| \tilde{\Omega}^{t} -  \Omega^{t} ||_{L^8} \leq C t^{\frac{4}{3}} $. 
\end{proof}

\begin{lemma}
Let $A_t$ be the approximate solution in Section 4.2. 
Then, 
\begin{equation}
 |\pi_{7}^2 (F_{A_t})| =  
\begin{cases}
O( t^6 \rho^{-8}) + O(1) , & \quad \rho \in (t^{\frac{5}{6}} , t^{\frac{3}{4}}), \\ 
0 , & \quad \text{otherwise} ,
\end{cases}
\label{eq:pi27fa}
\end{equation}
and 
\begin{equation}
 |F_{A_t}| =  
\begin{cases}
O(t^{6} \rho^{-8}), & \quad \rho \leq t^{\frac{5}{6}}, \\ 
O( t^6 \rho^{-8}) + O(1),  & \quad \rho \in (t^{\frac{5}{6}} , t^{\frac{3}{4}}), \\
O(1) , & \quad \rho \geq t^{\frac{3}{4}} ,
\end{cases}
\label{eq:fa}
\end{equation}
\end{lemma}

\begin{proof}
These follow from the definition of $A_{t}$, in particular, from $| d \chi_{t}^{j} | = O 
( r^{-1})$ and $| A_{X_{n_j}^{t}} - A_Z | = O (t^{6} r^{-7})$ on 
 $M_{0}^{t} \cap M_{j}^{t} \, (j = 1, \dots , k)$. 
\end{proof}

From \eqref{eq:pi27fa}, we obtain 
\begin{equation*}
\begin{split}
 || \pi_{7}^{2} (F_{A_t}) ||_{L^4} 
& \sim \left( \int_{t^{\frac{5}{6}}}^{t^{\frac{3}{4}}} 
(t^6 r^{-8} + 1 )^4 r^{7} dr \right)^{\frac{1}{4}} \\ 
& \sim \left( \left[ t^{24} r^{-24} + r^8 \right]_{t^{\frac{5}{6}}}^{t^{\frac{3}{4}}} 
\right)^{\frac{1}{4}} \\ 
& =  O (t) . \\ 
\end{split}
\end{equation*}
Also, from \eqref{eq:fa}, we obtain  
\begin{equation*}
 \begin{split}
 || F_{A} ||_{L^8} 
 &= \left[ \,  O (t^{-16}) O( t^{8} ) + \int_{t}^{1} (t^{6} r^{-8} 
 + 1 )^{8} r^7 dr  + O(1) \, \right]^{\frac{1}{8}} \\
 &= O(t^{-1}) . \\
 \end{split}
\end{equation*}
Hence  Proposition \ref{prop:error} follows. 
\end{proof}

\section{Linear problem}

In this section, we derive an estimate (Proposition 
\ref{prop:linearprob}) which comes from 
the Fredholm property of the linearized operator of the 
$Spin(7)$-instanton equation.

\subsection{Fredholm property of the linearized operator on ALE 
  $Spin(7)$-manifolds}

We use weighted Sobolev spaces on the ALE side  
in order to obtain the Fredholm property of the linearized operator 
from the direct use of 
the Lockhart-McOwen theory \cite{MR837256} (see also \cite{MR879560}, 
\cite{MR849427}).

\paragraph{Weighted Sobolev spaces.}

Let $X$ be an ALE $Spin(7)$-manifold. 
We denote by $\rho$ the radius function on $X$.   
Let $E \to X$ be a unitary vector bundle 
equipped with  a connection $A$ which is asymptotic to a flat connection 
at infinity. 
For $p \geq 1, k \geq 0$ and $\delta \in \R$, we define the 
{\it weighted Sobolev space} $L_{k, \delta}^{p} (E)$ 
by the set of locally integrable and $k$ 
times weakly differentiable section $f$ of $E$,  for which the norm  
\begin{equation*} 
 || f ||_{L^{p}_{k, \delta}} 
= \sum_{j=0}^{k} \left( 
\int_{X} \rho^{-8} | \rho^{-\delta +j} \nabla^{j}_{A} f |^{p}  d V 
\right)^{\frac{1}{p}}  
\end{equation*}
is finite. 
Then $L^{p}_{k, \delta} (E)$ is a Banach space. 
We remark the following relations between the scale-invariant norms 
mentioned above and the weighted norms:  
\begin{equation*} 
 || a ||_{L^{8}} = || a ||_{L^{8}_{-1}} , 
\, \,
 || \nabla_{A} a ||_{L^4} = || \nabla_{A} a ||_{L^{4}_{-2
}} , 
\, \,
|| F_{A} ||_{L^4} = || F_{A} ||_{L^{4}_{-2}}. 
\end{equation*}
We have the following Sobolev embedding theorem for the weighted spaces 
as well. 
\begin{proposition}[Sobolev embedding (\cite{MR879560}, Theorems 
 4.8)]
Let $k \geq l \geq 0, \, p, q  \geq 1$. 
If $\frac{1}{p} \leq \frac{1}{q} + \frac{k -l}{n}$, $\delta \leq \delta'$, 
then $L_{k, \delta}^{p} (E) \to L_{l , \delta'}^{q} (E)$ 
is a continuous
 inclusion. 
\end{proposition}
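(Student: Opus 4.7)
The plan is to establish the embedding by a dyadic decomposition of the ALE end combined with a rescaling argument that reduces everything to the standard Sobolev embedding on a fixed bounded annulus. Throughout, $n=8$ is the dimension and $\rho$ is the radius function.

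First I would set up the decomposition. Fix a compact core $K = \{\rho \leq 2\} \subset X$ and write the ALE end as a countable union of dyadic annuli $A_j = \{2^{j} \leq \rho \leq 2^{j+1}\}$ for $j \geq 1$. On $K$, the weight $\rho^{-8}$ is bounded above and below, the connection $A$ is smooth, and the weighted norms are equivalent to the ordinary Sobolev norms, so the classical Sobolev embedding on a bounded domain handles this part.

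The main work is on the annuli. On each $A_j$ with $R = 2^j$, apply the diffeomorphism $y \mapsto Ry$ which identifies $A_j$ (asymptotically) with the fixed model annulus $A_1 = \{1 \leq |y| \leq 2\} \subset \R^n/\Gamma$. Under this rescaling, with $\tilde{f}(y) = f(Ry)$, one has $dV_x = R^n\, dV_y$ and $\nabla^j_x f = R^{-j}\nabla^j_y \tilde{f}$, while $\rho \sim R$ on $A_j$. A short computation then gives the scaling relation
\begin{equation*}
\|f\|_{L^p_{k,\delta}(A_j)} \sim R^{-\delta}\,\|\tilde{f}\|_{W^{k,p}(A_1)},\qquad \|f\|_{L^q_{l,\delta'}(A_j)} \sim R^{-\delta'}\,\|\tilde{f}\|_{W^{l,q}(A_1)}.
\end{equation*}
The standard Sobolev embedding $W^{k,p}(A_1) \hookrightarrow W^{l,q}(A_1)$, valid under the hypothesis $1/p \leq 1/q + (k-l)/n$, then yields the annular estimate
\begin{equation*}
\|f\|_{L^q_{l,\delta'}(A_j)} \leq C\, 2^{j(\delta-\delta')}\,\|f\|_{L^p_{k,\delta}(A_j)},
\end{equation*}
with a constant $C$ independent of $j$. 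The $j$-independence of $C$ uses that on the rescaled model annulus the metric and the rescaled connection converge uniformly to the flat limit (as $A \sim A_0 + O(\rho^{-7})$), so the Sobolev constants are uniformly bounded in $j$.

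Finally I would sum over $j$. Raise the annular estimates to the $q$-th power and sum: since $\delta \leq \delta'$, the factor $2^{jq(\delta-\delta')}$ is at most $1$, so
\begin{equation*}
\|f\|_{L^q_{l,\delta'}(X)}^q \leq C\sum_j \|f\|_{L^p_{k,\delta}(A_j)}^q.
\end{equation*}
When $p \leq q$ this is controlled by $\|f\|_{L^p_{k,\delta}(X)}^q$ using the sequence-space inclusion $\ell^p \hookrightarrow \ell^q$; when $p > q$ one uses that strict inequality $\delta < \delta'$ (the generic case) gives geometric decay of the weight factor so that a Hölder inequality in the summation variable closes the estimate. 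Combining with the compact-core contribution yields the claimed continuous inclusion. The main technical nuisance is this last summation step: one must verify the borderline combinations of $(p,q,\delta,\delta')$ case by case, and in particular check that the rescaling constants and the constants from the model Sobolev embedding are genuinely uniform in the dyadic scale, which is where the asymptotic flatness of both $g$ and $A$ is essential.
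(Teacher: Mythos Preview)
The paper does not prove this proposition at all; it is quoted verbatim from Lockhart \cite{MR879560}, Theorem~4.8, and used as a black box. Your dyadic-rescaling argument is exactly the standard proof of such weighted embeddings (it is the method used in Lockhart and in Bartnik \cite{MR849427}), so in that sense you have reconstructed the cited result rather than diverged from the paper.

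One genuine caution: your summation step is not quite complete in the borderline case $p>q$, $\delta=\delta'$. There the factor $2^{jq(\delta-\delta')}$ equals $1$, and the $\ell^p\hookrightarrow\ell^q$ inclusion goes the wrong way, so no H\"older argument will save you; indeed the embedding can fail in that regime (take $k=l$, $p>q$, $\delta=\delta'$: then $L^p_\delta\hookrightarrow L^q_\delta$ would be $L^p\hookrightarrow L^q$ on an infinite-volume space). The statement as written in the paper is slightly loose on this point; the precise version in the reference either assumes $p\leq q$ or requires strict inequality $\delta<\delta'$ when $p>q$. Since every application in the paper has $p\leq q$ (e.g.\ $L^4_{1,-1}\hookrightarrow L^8_{-1}$), this does not affect anything downstream, but you should not claim the full range without that caveat.
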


\paragraph{Fredholm property.}

We deduce the Fredholm property of the linearized operator on ALE 
spaces by using the Lockhart--McOwen theory.

Since we consider a connection asymptotic to a flat connection at 
infinity,  the linearized operator $L_{A}$  
reduces to the Dirac operator 
on $S^{7} /\Gamma_{8} \times (R, \infty )$ with the metric $r^2 g_{S^7} 
+ dr^2$ at infinity.  
Then the Lockhart--McOwen theory \cite{MR837256}, \cite{MR879560} 
tells us that the linearized operator 
$L_{A} : L^{p}_{k+1 , \delta} 
(\mathfrak{u}(E) \otimes \Lambda^1) \to L^{p}_{k , \delta -1}  
( \mathfrak{u} (E) \otimes ( \Lambda^{0} \oplus \Lambda^{2}_{7} ) )$ 
is Fredholm if and only if  $\delta$ does not lie in an exceptional set 
which is essentially determined by eigenvalues of the Dirac operator on 
$S^7 / \Gamma_{8}$ in our case. 
Using the fact that the eigenvalues of the Dirac operators on the sphere 
$S^n$ of constant sectional curvature $1$ are $\pm \left( \frac{n}{2} +
k \right) , \, k \geq 0$ (see for example Theorem 1 in
\cite{MR1361548}), 
we put 
\begin{equation*}
 \mathcal{D} 
= 
\left\{ 
\pm \left( \frac{7}{2}  + k \right) - \frac{7}{2} 
\, : \,  
k \geq 0 , \, k \in \Z 
\right\} .
\end{equation*}
Then the following is the direct consequence of Theorem 6.2 of 
\cite{MR837256}. 
\begin{proposition} 
Let $X$ be an ALE $Spin(7)$-manifold, 
$E$ a unitary vector bundle over $X$, and 
$A$ a $Spin(7)$-instanton on $E$ asymptotic to a flat connection $A_{0}$ 
 at infinity. 
Let $p \geq 1, k \geq 0$, and $\delta \in \R \setminus \mathcal{D}$.  
Then the operator 
\begin{equation*} 
 L_{A} : L_{k+1 , \delta}^{p} ( 
  \mathfrak{u}(E) \otimes \Lambda^{1} ) \to L_{k, \delta -1}^{p} 
( \mathfrak{u}(E) \otimes ( \Lambda^{0} \oplus \Lambda^{2}_{7} ) ) 
\end{equation*} 
is Fredholm. 
Moreover, the kernel, cokernel, and index are independent of 
$p,k$ and $\delta$ 
 within any connected component in $\R \setminus \mathcal{D}$. 
\label{prop:fredholm} 
\end{proposition}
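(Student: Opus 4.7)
The plan is to verify that $L_{A}$ falls under the Lockhart--McOwen framework for asymptotically translation-invariant elliptic operators on manifolds with cylindrical ends, and then to invoke Theorem 6.2 of \cite{MR837256} directly. The two ingredients that need to be set up carefully are (i) the reduction of $L_{A}$ at infinity to a translation-invariant model operator on a cylinder, and (ii) the explicit identification of the set of critical (indicial) weights with $\mathcal{D}$.

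First, I would rewrite the end of $X$ in cylindrical coordinates via $r = e^{t}$. Outside a compact set, the ALE condition identifies $X$ with the model $((R, \infty) \times S^{7}/\Gamma_{8}, dr^{2} + r^{2} g_{S^{7}})$ up to corrections of order $O(r^{-8})$. After the substitution the metric becomes $e^{2t}(dt^{2} + g_{S^{7}/\Gamma_{8}})$ plus exponentially decaying error, so the end is conformally equivalent to a standard cylinder on which translation-invariant techniques apply.

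Second, I would analyze $L_{A}$ on this end. Under the spinor identification \eqref{eq:spinor}, $L_{A}$ is a twisted Dirac operator; using its conformal covariance together with the hypothesis that $A$ is asymptotic to the flat connection $A_{0}$, its leading part on the cylinder takes the translation-invariant form $\partial_{t} + D_{0}$, where $D_{0}$ is the Dirac operator on $S^{7}/\Gamma_{8}$ twisted by the restriction of $A_{0}$ to the cross section. The remaining terms decay exponentially in $t$, coming from both the ALE fall-off of the metric and from $A - A_{0}$ decaying at infinity, so the hypotheses of the Lockhart--McOwen theory are satisfied. By that theory, $\delta$ is non-critical precisely when $-\delta$ does not lie in the spectrum of the model operator $\partial_{t} + D_{0}$ after Fourier transform in $t$, equivalently when $-\delta$ is not an eigenvalue of $D_{0}$ shifted by the conformal weight of a spinor in dimension $8$, which is $7/2$. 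The spectrum of the Dirac operator on the round $S^{7}$ is $\{ \pm ( \tfrac{7}{2} + k ) : k \in \Z_{\geq 0}\}$ by \cite{MR1361548}, and the spectrum on $S^{7}/\Gamma_{8}$ with a flat twist by $A_{0}$ is a subset obtained by restricting to the $\Gamma_{8}$-equivariant spinors with values in the corresponding representation. Shifting by $-7/2$ reproduces exactly the set $\mathcal{D}$ in the statement.

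Finally, with the model identified, Theorem 6.2 of \cite{MR837256} delivers the Fredholm property on $L^{p}_{k+1,\delta}$ for $\delta \notin \mathcal{D}$, together with independence of kernel, cokernel, and index within each connected component of $\R \setminus \mathcal{D}$: the index is locally constant by the standard deformation invariance of the Fredholm index, while independence of the kernel and cokernel under varying $p$ and $k$ follows from elliptic regularity for $L_{A}$ combined with the fact that an $L^{p}_{k+1,\delta}$ kernel element automatically decays at every rate up to the next critical weight, placing it in every admissible $L^{q}_{l+1,\delta'}$. The step requiring the most care is the precise identification of $\mathcal{D}$: one must track the conformal weight of the spinor bundle under the change of metric and verify that the flat twist by $A_{0}$ only relabels eigenspaces of the Dirac operator on $S^{7}/\Gamma_{8}$ without producing eigenvalues outside the list on the round sphere. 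Once that bookkeeping is in place, the proposition is an immediate consequence of \cite{MR837256}.
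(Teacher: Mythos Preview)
Your proposal is correct and follows the same approach as the paper: reduce $L_{A}$ on the end to a translation-invariant Dirac-type operator on the cylinder over $S^{7}/\Gamma_{8}$, identify the critical weights via the spectrum of the spherical Dirac operator shifted by the conformal weight $7/2$, and then invoke Theorem~6.2 of \cite{MR837256}. In fact the paper offers no proof beyond declaring the proposition a direct consequence of \cite{MR837256} after recalling the eigenvalues from \cite{MR1361548}, so your write-up is strictly more detailed than the paper's own treatment.
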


\paragraph{Improvement of decay rates.}

By using the Fredholm property of the operator $L_{A}$ on ALE 
$Spin(7)$-manifolds, we prove the following:

\begin{proposition}

Let $X$ be an ALE $Spin(7)$-manifold, $E$ a unitary vector bundle over $X$, 
and $A_0$ a connection asymptotic at rate $\lambda$ to a flat connection at infinity. 
Assume that $a \in L^{8}_{k+1, \mu} (  \mathfrak{u}(E) \otimes 
\Lambda^1 )$, 
$\mu < -1$,  
and $A_{0} + a$  
satisfies the $Spin(7)$-instanton equation with $d_{A_0}^{*} a = 0$. 
Then $a \in L^{8}_{k+1 , \mu'}  ( \mathfrak{u}(E) \otimes \Lambda^1)$ 
for  any $\mu$ with $\lambda \leq \mu' < \mu$, which satisfies 
$[\mu', \mu ] \cap \mathcal{D} = \emptyset $. 
\end{proposition}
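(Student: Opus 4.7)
The plan is to bootstrap the decay rate of $a$ via a Lockhart--McOwen style improvement-of-decay argument, using the Fredholm theory of Proposition \ref{prop:fredholm} together with the quadratic structure of the nonlinear term in the $Spin(7)$-instanton equation. First I would derive the PDE satisfied by $a$: expanding $F_{A_0+a}=F_{A_0}+d_{A_0}a+a\wedge a$, projecting by $\pi_7^2$ and adjoining the gauge condition $d_{A_0}^{*}a=0$ yields
\begin{equation*}
 L_{A_0}\,a \;=\; Q(a) \;:=\; -\bigl(\pi_7^2 F_{A_0} + \pi_7^2(a\wedge a),\; 0\bigr).
\end{equation*}

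Next I would estimate the weighted regularity of $Q(a)$. Writing $A_0=A_\infty+\alpha$ with $A_\infty$ flat and $|\nabla^j\alpha|=O(\rho^{\lambda-j})$ gives $|\nabla^j F_{A_0}|=O(\rho^{\lambda-1-j})$, hence $\pi_7^2 F_{A_0}\in L^4_{k,\lambda-1}$. A weighted H\"older inequality combined with the weighted Sobolev embedding of the preceding proposition yields the multiplication bound $\|a\wedge a\|_{L^4_{k,2\mu}}\lesssim\|a\|_{L^8_{k+1,\mu}}^2$. Therefore $Q(a)\in L^4_{k,\nu-1}$ with $\nu:=\min(\lambda,\,2\mu+1)$. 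Because $\mu<-1$ we have $2\mu+1<\mu$, so $\nu<\mu$; and by the hypothesis $[\mu',\mu]\cap\mathcal{D}=\emptyset$, the whole segment $[\mu',\mu]$ lies inside a single connected component of $\R\setminus\mathcal{D}$.

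Now I would invoke the Lockhart--McOwen improvement of decay that underlies Proposition \ref{prop:fredholm}: if $a\in L^8_{k+1,\mu}$ and $L_{A_0}a\in L^4_{k,\tilde\nu-1}$ with $[\tilde\nu,\mu]\cap\mathcal{D}=\emptyset$, then $a\in L^p_{k+1,\tilde\nu}$ for a suitable $p$, and hence $a\in L^8_{k+1,\tilde\nu}$ after reapplying the weighted Sobolev embedding. Choosing $\tilde\nu\in[\mu',\mu)$ close to $\nu$ but avoiding $\mathcal{D}$, one upgrades $a$ from weight $\mu$ to weight $\tilde\nu$. Iterating with the new weight in place of $\mu$, the quadratic target $2\tilde\nu+1$ decreases each step until the minimum is attained at $\lambda$; stopping the iteration as soon as $\tilde\nu\leq\mu'$ yields the desired $a\in L^8_{k+1,\mu'}$.

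The main obstacle is the weight bookkeeping: each intermediate $\tilde\nu$ must be kept strictly inside the connected component of $\R\setminus\mathcal{D}$ determined by $[\mu',\mu]$, and the argument has to transfer carefully between the $L^4$- and $L^8$-based weighted Sobolev scales, taking the initial $k$ large enough to absorb the borderline loss in the eight-dimensional Sobolev embedding. A secondary technical point is the weighted product estimate controlling $a\wedge a$, which is routine but must be verified uniformly across the finitely many steps of the iteration. Once these points are in hand, the argument is a standard ALE bootstrap.
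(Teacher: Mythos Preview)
Your approach is correct and essentially the same as the paper's: both bootstrap the decay rate by observing that the quadratic term $\pi_7^2(a\wedge a)$ lies at weight $2\mu$, then use the constancy of the kernel and cokernel of $L_{A_0}$ across the connected component of $\R\setminus\mathcal{D}$ containing $[\mu',\mu]$ to upgrade $a$ to a better weight, and iterate. The only difference is presentational: the paper spells out the decay-improvement step via an explicit lemma (it constructs $\beta\in L^8_{k+1,\varepsilon}$ with $L_{A_0}\beta=(0,\pi_7^2(a\wedge a))$ outside a compact set by writing the right-hand side modulo finitely many compactly supported cokernel representatives, then uses $(\ker L_{A_0})_\mu=(\ker L_{A_0})_\varepsilon$), whereas you cite this as a standard Lockhart--McOwen regularity improvement; your explicit inclusion of the $\pi_7^2 F_{A_0}$ term is how the bound $\mu'\geq\lambda$ actually enters.
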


\begin{proof}
Firstly, the following is a consequence of Proposition 
 \ref{prop:fredholm}: 
\begin{lemma}
Let $a \in L_{k+1, \delta}^{8}  ( \mathfrak{u}(E) \otimes \Lambda^1)$, 
$\delta < -1$.  
Then for $\varepsilon \geq 2 \delta +1 $ with 
$\varepsilon \in \R \setminus \mathcal{D}$ 
there exists $\beta \in L_{k+1 , \varepsilon}^{8}  
( \mathfrak{u}(E) \otimes \Lambda^1)$ 
such that $L_{A_{0}} \beta = (0 , \pi_{7}^{2} (a \wedge a) )$ holds near 
 infinity. 
\label{lem:imp} 
\end{lemma}

\begin{proof}
Since $a \in L^{8}_{k+1, \delta}  (\mathfrak{u} (E) \otimes \Lambda^1)$,   
thus $\pi^{2}_{7} ( a  \wedge a ) \in L^{4}_{k +1, 2 \delta} 
(\mathfrak{u} (E) \otimes \Lambda^2) 
\subset L^{4}_{k +1, \varepsilon -1} 
(\mathfrak{u} (E) \otimes \Lambda^2)$.  
From Proposition \ref{prop:fredholm}, $L_{A_0} : L^{4}_{k+2 , \varepsilon} 
 (\mathfrak{u} (E) \otimes \Lambda^1) 
\to L^{4}_{k+1, \varepsilon -1 } 
   ( \mathfrak{u} (E) \otimes (\Lambda^0 \oplus  
\Lambda^2_{7}) )$ is Fredholm.  
Hence the cokernel of $L_{A_{0}}$ is 
 finite-dimensional, of dimension $n$ say. 
We take compactly supported sections $\varphi_1 , \dots , \varphi_{n}$  such that 
$$ L^{4}_{k+1 , \varepsilon -1} 
 \left(  \mathfrak{u} (E) \otimes (\Lambda^{0} \oplus \Lambda_{7}^{2} )  
\right) 
= L_{A_0} \left( L^{4}_{k+2 ,\varepsilon} 
(\mathfrak{u} (E) \otimes \Lambda^1) \right) 
\oplus \langle \varphi_1 , \dots ,  \varphi_n \rangle .$$ 
Then there exist $\beta \in L^{4}_{k+2 , \varepsilon } 
( \mathfrak{u} (E) \otimes \Lambda^1)$ and unique  
constants $u_1 , \dots , u_n$ such that  
$$ (0 ,  \pi^{2}_{7} (a \wedge a) ) 
= L_{A_0} \beta  
+ u_1 \varphi_1 + \cdots u_n \varphi_n . $$ 
Thus, 
we have $L_{A_0} \beta = (0 , \pi^{2}_{7} (a \wedge a) )$ 
outside the support of $\varphi_1 , \dots , \varphi_n$ in $X$.   
By Sobolev embedding $L_{k+2 , \varepsilon }^{4} (\mathfrak{u} (E) \otimes \Lambda^1) 
\to L_{k+1 , \varepsilon}^{8} (\mathfrak{u} (E) \otimes \Lambda^1)$,  
this $\beta$ lies in $L_{k+1 , \varepsilon}^{8} 
( \mathfrak{u} (E) \otimes \Lambda^1) $. 
\end{proof}

We take $\delta = \mu$ and 
$\varepsilon = \text{max} \{ 2 \mu +1 , \mu' \}$ in Lemma \ref{lem:imp}. 
Then we have $[ \varepsilon , \delta ] \cap \mathcal{D} = \emptyset$   
and $\varepsilon \in \R \setminus \mathcal{D}$,  
 since $[\mu' , \mu ] \cap \mathcal{D} = \emptyset$. 
Lemma \ref{lem:imp} gives $\beta \in L^{8}_{k +1 , \varepsilon} 
(\mathfrak{u}(E) \otimes \Lambda^{1})$ with $L_{A_{0}} 
\beta = ( 0 , \pi^{2}_{7} (a \wedge a ))$ near infinity. 
Thus, we get $L_{A_{0}} (a + \beta) =0$ near infinity. 
Since the cokernel of $L_{A_{0}}$ 
is independent of the choice of weights within 
any component in $\R \setminus \mathcal{D}$,
$L_{A_{0}} (a + \beta) \perp (\text{coker} \, L_{A_{0}})_{\delta}$ 
implies 
$L_{A_{0}} (a + \beta) \perp (\text{coker} \, L_{A_{0}})_{\varepsilon}$.  
Thus, $L_{A_0} (a + \beta ) \in (\text{Im} L_{A_{0}})_{\varepsilon}$, 
and there exists $\gamma \in 
L_{k+1 , \varepsilon}^{8} (\mathfrak{u}(E) \otimes \Lambda^{1})$ such that 
$L_{A_0} \gamma = L_{A_0} (a + \beta )$. 
As the kernel of $L_{A_{0}}$  
 is also independent of the choice of weights within any component in $\R \setminus \mathcal{D}$, 
we obtain, 
$a + \beta - \gamma \in ( \ker L_{A_0} )_{\delta} 
=   ( \ker L_{A_0} )_{\varepsilon}$. 
Since $\beta , \gamma \in L^{8}_{k +1 , \varepsilon} 
(\mathfrak{u}(E) \otimes \Lambda^{1})$, 
thus $a \in L^{8}_{k +1 , \varepsilon} 
(\mathfrak{u}(E) \otimes \Lambda^{1})$.

Therefore, starting with $a \in L_{k+1, \mu}^{8} 
(\mathfrak{u}(E) \otimes \Lambda^{1})$ with $\mu < -1$, we 
 see that  $a \in L_{k+1 , 2\mu +1}^{8}
(\mathfrak{u}(E) \otimes \Lambda^{1})$, provided $[ 2 \mu +1 , \mu ] 
\cap \mathcal{D} = \emptyset$. 
We then put $\mu_{0} = \mu = (\mu + 1) -1 , \, \mu_1 = 2\mu +1 = 2 (\mu +1) -1 , \, \dots , 
\, \mu_{k} = 2^{k} (\mu +1 ) -1 $, and let $\ell$ be the least 
 satisfying 
 $2^{\ell} (\mu+1) -1 \leq \mu'$, and say $\mu_{\ell} = \mu'$ for 
simplicity. 
Since $[\mu' , \mu ] \cap \mathcal{D} = \emptyset$, thus  
$\mu_{0} ,  \dots , \mu_{\ell} \in \R \setminus \mathcal{D}$.  
Hence, we inductively obtain 
$a \in L_{k+1 , \mu_{i}}^{8} 
(\mathfrak{u}(E) \otimes \Lambda^{1}) 
\, (i =1 , \dots , \ell)$. 
Thus, $a \in L_{k+1 , \mu'}^{8} (\mathfrak{u}(E) \otimes \Lambda^{1})$. 
\end{proof}

\subsection{Estimates}

We choose a finite dimensional vector space $K_Z$ in $ 
\Omega^1 (Z,  \mathfrak{u} (E_Z) )$, whose elements are 
supported away from $p_j  \, (j=1 , \dots , k)$ with the following 
properties: 
\begin{itemize}
 \item $ \dim K_Z = \dim \ker (L_{A_Z})$  and 
 \item $ 
\Omega^1 (Z , \mathfrak{u}(E_Z) )  = 
K_{Z}^{\perp}  \oplus \ker (L_{A_Z})$, 
\end{itemize}
where $K_{Z}^{\perp}$ is the $L^2$-orthogonal complement of $K_Z$ in 
$\Omega^1 (Z , \mathfrak{u}(E_Z) )$. 
Since all elements in $K_Z$ are supported on the region $M_{0}^{t}$ of
$Z \subset 
M$ for small $t$, we can think of $K_Z$ as lying in $ \Omega^1 (M , \mathfrak{u}(E) )$. 
We will use $K_Z$ as a substitute for the kernel of $L_{A_{Z}}$, which
also makes sense on $M$.

In a similar way to 
$K_Z$ above,  
we choose $K_{X_{n_j}}$ in $\Omega^1 (X_{n_{j}}, 
\mathfrak{u} (E_{X_{n_j}}) )$ for each $j = 1 , \dots , k$, 
whose elements are compactly supported away from infinity, 
and think of $K_{X_{n_j}}$ in $ 
\Omega^1 (M , \mathfrak{u}(E) )$.  
These $K_Z, K_{X_{n_j}} (j = 1, \dots , k)$ are substitutes for the 
kernels of $L_{Z} , L_{X_{n_j}} (j = 1 , \dots , k)$. 
We then put  
\begin{equation*} 
K = K_{Z} \oplus \bigoplus_{j=1}^{k} K_{X_{n_j}} 
\subset \Omega^{1} (M , \mathfrak{u} (E)). 
\end{equation*}

Firstly, we prove the following: 
\begin{lemma}
There exists a constant 
$C_2 >0$ such that 
the following holds  
for any $a_{X_{n_j}} \in L^{4}_{1, -1} 
(\mathfrak{u} (E_{X_{n_j}}) \otimes \Lambda^1 )$,  
which is $L^2$-orthogonal to $K_{X_{n_{j}}}$:   
\begin{equation}
  || a_{X_{n_j}}||_{L^{8}} + || \nabla a_{X_{n_j}} ||_{L^{4}} 
\leq C_2 || L_{A_{X_{n_{j}}}} a_{X_{n_j}} ||_{L^{4}}. 
\label{eq:wtf}
\end{equation}
\label{lem:fxnj}
\end{lemma}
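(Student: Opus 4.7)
The plan is to derive this as the standard Fredholm elliptic estimate on the ALE $Spin(7)$-manifold $X_{n_j}$, phrasing it first in the weighted Sobolev space $L^{4}_{1,-1}$ and then converting to the scale-invariant norms in the statement. The three ingredients I will use are the Fredholm property of $L_{A_{X_{n_j}}}$ from Proposition \ref{prop:fredholm}, the surjectivity assumption on $L_{A_{X_{n_j}}}$ for $\delta\in(-7,0)$ stated in Section~4.1, and the construction of $K_{X_{n_j}}$ as a substitute for $\ker L_{A_{X_{n_j}}}$.

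First I would apply Proposition \ref{prop:fredholm} with $p=4$, $k=0$, $\delta=-1$. Since $\mathcal{D}=\{0,1,2,\dots\}\cup\{-7,-8,\dots\}$ does not contain $-1$, this gives that
\[
L_{A_{X_{n_j}}}\colon L^{4}_{1,-1}(\mathfrak{u}(E_{X_{n_j}})\otimes\Lambda^1)\longrightarrow L^{4}_{0,-2}(\mathfrak{u}(E_{X_{n_j}})\otimes(\Lambda^0\oplus\Lambda^2_7))
\]
is Fredholm; the surjectivity hypothesis from Section~4.1 then promotes it to a surjection with finite-dimensional kernel. Next, by the choice of $K_{X_{n_j}}$ in parallel with $K_Z$, one has $\dim K_{X_{n_j}}=\dim\ker L_{A_{X_{n_j}}}$ and the $L^2$ pairing on $K_{X_{n_j}}\times \ker L_{A_{X_{n_j}}}$ is non-degenerate, so $K_{X_{n_j}}^{\perp}\cap\ker L_{A_{X_{n_j}}}=0$ and $L^{4}_{1,-1}=(K_{X_{n_j}}^{\perp}\cap L^{4}_{1,-1})\oplus\ker L_{A_{X_{n_j}}}$. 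Therefore $L_{A_{X_{n_j}}}$ restricted to $K_{X_{n_j}}^{\perp}\cap L^{4}_{1,-1}$ is a continuous bijection onto $L^{4}_{0,-2}$ between Banach spaces, and the open mapping theorem furnishes a bounded inverse. For $a_{X_{n_j}}$ satisfying the hypotheses of the lemma this gives
\[
||a_{X_{n_j}}||_{L^{4}_{1,-1}}\le C\,||L_{A_{X_{n_j}}}a_{X_{n_j}}||_{L^{4}_{0,-2}}.
\]

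Finally I would translate between weighted and scale-invariant norms using the conventions recorded at the start of Section~5.1. The $L^{4}_{1,-1}$ norm directly dominates $||\nabla a_{X_{n_j}}||_{L^{4}_{-2}}=||\nabla a_{X_{n_j}}||_{L^4}$, and the weighted Sobolev embedding $L^{4}_{1,-1}\hookrightarrow L^{8}_{-1}$ (Proposition~5.2) gives $||a_{X_{n_j}}||_{L^8}=||a_{X_{n_j}}||_{L^{8}_{-1}}\le C\,||a_{X_{n_j}}||_{L^{4}_{1,-1}}$. Since $||L_{A_{X_{n_j}}}a_{X_{n_j}}||_{L^{4}_{0,-2}}=||L_{A_{X_{n_j}}}a_{X_{n_j}}||_{L^4}$, combining these inequalities gives the claimed bound.

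The only delicate point I anticipate is confirming that the direct-sum decomposition, originally stated for the smooth space $\Omega^1$ when defining $K_Z$, transfers faithfully to the weighted Sobolev space $L^{4}_{1,-1}$ on $X_{n_j}$. This is routine because $K_{X_{n_j}}$ is compactly supported (so it automatically lies in every weighted space) and $\ker L_{A_{X_{n_j}}}$ computed in $L^{4}_{1,-1}$ is weight-independent within the component of $\R\setminus\mathcal{D}$ containing $-1$, so the dimension count is stable; still, this is the step where the pieces must be tied together carefully to invoke the open mapping theorem cleanly.
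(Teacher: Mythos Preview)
Your proposal is correct and follows essentially the same line as the paper: invoke Proposition~\ref{prop:fredholm} at $\delta=-1$ to obtain the weighted estimate $\|a_{X_{n_j}}\|_{L^{4}_{1,-1}}\le C\|L_{A_{X_{n_j}}}a_{X_{n_j}}\|_{L^{4}_{-2}}$ for $a_{X_{n_j}}\perp K_{X_{n_j}}$, then use the weighted Sobolev embedding $L^{4}_{1,-1}\hookrightarrow L^{8}_{-1}$ and the identities $\|\cdot\|_{L^{8}_{-1}}=\|\cdot\|_{L^{8}}$, $\|\cdot\|_{L^{4}_{-2}}=\|\cdot\|_{L^{4}}$ to conclude. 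The paper's proof is slightly terser---it passes directly from Fredholmness to the estimate without explicitly invoking surjectivity or the open mapping theorem (indeed, the closed-range property of a Fredholm operator already suffices for the bound on a complement of the kernel)---but the arguments are otherwise the same.
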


\begin{proof}
From Proposition \ref{prop:fredholm}
\begin{equation*}
 L_{A_{X_{n_{j}}}} 
 : L_{1 , -1}^{4} ( 
 \mathfrak{u} (E_{X_{n_{j}}}) \otimes \Lambda^1) \to L_{-2}^{4}  
( \mathfrak{u} (E_{X_{n_{j}}})  \otimes 
( \Lambda^{0} \oplus \Lambda^{2}_{7} )  ) 
\end{equation*} 
is Fredholm, thus, if $a_{X_{n_j}} \perp K_{X_{n_j}}$, we have 
\begin{equation*}
 || a_{X_{n_j}} ||_{L^{4}_{1, -1}} \leq C || L_{A_{X_{n_{j}}}} a_{X_{n_j}}
 ||_{L^{4}_{-2}} . 
\end{equation*}
From the Sobolev embedding $L^{4}_{1, -1} \to L^{8}_{-1}$, we obtain 
\begin{equation*}
 || a_{X_{n_j}} ||_{L^{8}_{-1}} \leq C \left( ||a_{X_{n_j}} ||_{L^{4}_{-1}} 
+ || \nabla a_{X_{n_j}} ||_{L^{4}_{-2}} 
\right) . 
\end{equation*}
Thus,  we get 
\begin{equation*}
 || a_{X_{n_j}} ||_{L^{8}_{-1}} + || \nabla a_{X_{n_j}} ||_{L^{4}_{-2}} 
\leq C || L_{A_{X_{n_{j}}}} a_{X_{n_j}} ||_{L^{4}_{-2}} . 
\end{equation*}
Since $|| \cdot ||_{L^{8}_{-1}} = || \cdot ||_{L^8} , \, 
|| \cdot ||_{L^{4}_{-2}} = || \cdot ||_{L^4}$, 
hence, \eqref{eq:wtf} follows. 
\end{proof}

Similarly we have the following: 
\begin{lemma} 
There exists a constant 
$C_3 >0$ such that the following holds for any  
$a_Z \in L^{4}_{1} (\mathfrak{u} (E_{Z}) \otimes \Lambda^1)$, which is 
 $L^2$-orthogonal to $K_Z$: 
\begin{equation}
  || a_Z ||_{L^{8}} + || \nabla a_Z ||_{L^{4}} 
\leq C_3 || L_{A_{Z}} a_Z ||_{L^{4}}  . 
\label{wtf2}
\end{equation}
\label{lem:fz}
\end{lemma}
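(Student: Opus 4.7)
My plan is to mimic the proof of Lemma 5.5, but with compact rather than ALE elliptic theory, since $Z$ is a compact orbifold. The operator
\[
L_{A_Z} = (d_{A_Z}^7, d_{A_Z}^*) : L^4_1(\mathfrak{u}(E_Z)\otimes\Lambda^1) \to L^4\bigl(\mathfrak{u}(E_Z)\otimes(\Lambda^0\oplus\Lambda^2_7)\bigr)
\]
is elliptic (as noted in Section 3.2) on the compact orbifold $Z$, and the usual elliptic theory carries over to orbifolds by working in local orbifold charts. Hence $L_{A_Z}$ is Fredholm, its kernel is finite-dimensional and smooth, and one has the standard G\aa rding-type estimate
\[
\|a_Z\|_{L^4_1} \leq C\bigl(\|L_{A_Z} a_Z\|_{L^4} + \|a_Z\|_{L^4}\bigr)
\]
for all $a_Z \in L^4_1(\mathfrak{u}(E_Z)\otimes\Lambda^1)$.

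The next step is to absorb the lower-order term $\|a_Z\|_{L^4}$ using the hypothesis $a_Z \perp K_Z$. By the decomposition $\Omega^1(Z,\mathfrak{u}(E_Z)) = K_Z^{\perp}\oplus\ker(L_{A_Z})$ assumed in the definition of $K_Z$, the subspaces $K_Z^\perp$ and $\ker(L_{A_Z})$ have trivial intersection, so $L_{A_Z}$ restricted to $K_Z^\perp$ is injective. I will argue by contradiction in the standard way: if no constant $C_3$ existed, one could take a sequence $a_n \in K_Z^\perp$ with $\|a_n\|_{L^4_1}=1$ and $\|L_{A_Z} a_n\|_{L^4}\to 0$. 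By Rellich--Kondrachov compactness of the embedding $L^4_1\hookrightarrow L^4$ (the subcritical case, which is compact on a compact orbifold), a subsequence converges in $L^4$, and combining this with the G\aa rding estimate shows the subsequence is Cauchy in $L^4_1$. The limit $a_\infty$ then satisfies $\|a_\infty\|_{L^4_1}=1$, $L_{A_Z} a_\infty=0$, and $a_\infty \in K_Z^\perp$, contradicting $K_Z^\perp\cap\ker(L_{A_Z})=\{0\}$. This yields $\|a_Z\|_{L^4_1}\leq C\|L_{A_Z}a_Z\|_{L^4}$.

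Finally, I will apply the critical Sobolev embedding $L^4_1 \hookrightarrow L^8$ on the $8$-dimensional compact orbifold $Z$ (which holds since $\tfrac18 = \tfrac14 - \tfrac18$), giving $\|a_Z\|_{L^8}\leq C\|a_Z\|_{L^4_1}$. Combining this with $\|\nabla a_Z\|_{L^4}\leq \|a_Z\|_{L^4_1}$ and the estimate from the previous paragraph yields
\[
\|a_Z\|_{L^8} + \|\nabla a_Z\|_{L^4} \leq C_3\|L_{A_Z} a_Z\|_{L^4},
\]
as required. There is no real obstacle here: the only mild technical point is making sure that elliptic regularity, Fredholm theory and Sobolev embeddings transfer to the orbifold setting of $Z$, but this is standard and is tacitly used throughout Joyce's construction; everything else is a direct adaptation of the ALE argument in Lemma 5.5, simplified by compactness.
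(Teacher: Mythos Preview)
Your proof is correct and matches the paper's approach: the paper offers no separate proof of this lemma, simply writing ``Similarly we have the following'' after the proof of Lemma~\ref{lem:fxnj}, and your argument is exactly the intended adaptation of that proof to the compact orbifold setting (Fredholmness of the elliptic operator, injectivity on $K_Z^\perp$, then the Sobolev embedding $L^4_1\hookrightarrow L^8$ in dimension~$8$). If anything you have supplied more detail than the paper, spelling out the Rellich compactness/contradiction step where the paper (in the ALE lemma) just passes directly from Fredholm plus orthogonality to the estimate.
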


With these lemmas above in mind, we prove the following:

\begin{proposition}
There exists a constant 
$C_4 >0$ independent of $t$ such that 
if $t$ is sufficiently small and $a \in L^{4}_{1} (\mathfrak{u}(E) \otimes \Lambda^1)$ 
is $L^2$-orthogonal to $K$, then   
\begin{equation*}
|| a ||_{L^{8}} + || \nabla a ||_{L^4}   
\leq C_4 \, || L_{A_t} a ||_{L^{4}}, 
\end{equation*}
where $L_{A_t}$ is the linearized operator with respect to the 
 $Spin(7)$-structure 
$\Omega^{t}$ on $M$. 
\label{prop:plinearprob} 
\end{proposition}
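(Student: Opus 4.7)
The plan is to decompose $a$ via a partition of unity into a piece supported on the orbifold side and pieces supported on each ALE side, apply the two local estimates from Lemmas 5.3 and 5.4 to each piece separately, and absorb the cross terms arising from the cutoff and from the difference between $A_t$ and the model connections. The scale-invariance of $||\cdot||_{L^8}$ on one-forms and $||\cdot||_{L^4}$ on two-forms makes the whole argument uniform in $t$.

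First I would build a partition of unity $\beta_0 + \sum_{j=1}^{k}\beta_j = 1$ on $M^t$, where $\beta_j$ ($j \geq 1$) is supported in $\{\rho \leq t^{3/4}\}$ on the ALE side of the $j$th gluing region and $\beta_0$ in $\{\rho \geq t^{5/6}\}$. Taking each $\beta_j$ to depend on $\rho$ only through a fixed profile of $\log\rho/\log t$ yields $|d\beta_j|_{g^t} \leq C(\rho|\log t|)^{-1}$ on the transition annulus, and a direct computation in $g^t$ then gives
\begin{equation*}
||d\beta_j||_{L^8} = O\bigl(|\log t|^{-7/8}\bigr) \longrightarrow 0 \quad (t \to 0).
\end{equation*}

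Next, set $a_0 = \beta_0 a$ and $a_j = \beta_j a$. Extended by zero through the gluing identifications, $a_0$ is a section of $\mathfrak{u}(E_Z) \otimes \Lambda^1$ over $Z$ and each $a_j$ a section of $\mathfrak{u}(E_{X_{n_j}}) \otimes \Lambda^1$ over $X_{n_j}$. Because $K_Z$ was chosen with support away from the singular points (hence where all $\beta_j$ vanish for $t$ small), and each $K_{X_{n_j}}$ with support compactly in the core of $X_{n_j}$ (where $\beta_j \equiv 1$), the hypothesis $a \perp K$ transfers to $a_0 \perp K_Z$ in $L^2(Z)$ and $a_j \perp K_{X_{n_j}}$ in $L^2(X_{n_j})$. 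Lemmas 5.3 and 5.4 therefore apply with constants $C_2, C_3$ independent of $t$, yielding
\begin{equation*}
||a_0||_{L^8} + ||\nabla a_0||_{L^4} \leq C_3 \, ||L_{A_Z} a_0||_{L^4}
\end{equation*}
and the analogous estimate for each $a_j$.

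I would then expand
\begin{equation*}
L_{A_Z}(\beta_0 a) = \beta_0 \, L_{A_t} a + \beta_0 (L_{A_Z} - L_{A_t}) a + [L_{A_Z}, \beta_0] a,
\end{equation*}
where the commutator is a zero-order operator with coefficient $d\beta_0$, and similarly for $L_{A_{X_{n_j}}}(\beta_j a)$ after transferring to $X_{n_j}$. Two error contributions then arise. The cutoff commutator is bounded in $L^4$ by $||d\beta_j||_{L^8} \, ||a||_{L^8} = O(|\log t|^{-7/8}) \, ||a||_{L^8}$. For the connection difference, $A_t - A_Z = \chi_t^j(t^{*}(A_{X_{n_j}}) - A_Z)$ is supported on the transition annulus with pointwise bound $O(t^6 \rho^{-7})$ coming from the prescribed decay rates in Section 4.1, so a direct computation yields $||A_t - A_Z||_{L^8} = O(t)$ and hence a contribution $O(t) \, ||a||_{L^8}$ in $L^4$. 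Summing over all pieces produces
\begin{equation*}
||a||_{L^8} + ||\nabla a||_{L^4} \leq C \, ||L_{A_t} a||_{L^4} + C \bigl(|\log t|^{-7/8} + t\bigr)\bigl(||a||_{L^8} + ||\nabla a||_{L^4}\bigr),
\end{equation*}
and absorbing the last factor into the left-hand side for $t$ sufficiently small gives the proposition.

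The main obstacle is confirming that the constants $C_2, C_3$ really do pass, without gaining any factor of $t$, through the rescaling $g_{n_j}^t = t^2 g_{n_j}$ relating the unrescaled ALE metric to the glued metric on $M^t$. The identifications $||\cdot||_{L^8} = ||\cdot||_{L^8_{-1}}$ and $||\cdot||_{L^4} = ||\cdot||_{L^4_{-2}}$ noted in Section 5.1 are exactly what delivers this scale-invariance, but care is needed at each stage to verify that the weighted Fredholm estimate on $X_{n_j}$ and the orthogonality to $K_{X_{n_j}}$ both survive the rescaling uniformly in $t$.
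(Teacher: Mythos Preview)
Your proposal is correct and follows essentially the same route as the paper: decompose $a$ via the cutoff $\chi_t^j$ (your $\beta_j$) into ALE and orbifold pieces, apply the two Fredholm estimates with constants $C_2,C_3$, and absorb the commutator term $||d\chi_t^j||_{L^8}\,||a||_{L^8}=O(|\log t|^{-7/8})||a||_{L^8}$ and the connection-difference term $O(t)||a||_{L^8}$. Your explicit verification that $a\perp K$ transfers to $\beta_0 a\perp K_Z$ and $\beta_j a\perp K_{X_{n_j}}$ via the support conditions on $K_Z,K_{X_{n_j}}$ is a point the paper leaves implicit, and your closing remark on scale-invariance is exactly what the paper invokes with the phrase ``since we use the conformally-invariant norms.''
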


\begin{proof}
We decompose $a \in \Omega^{1} (M , \mathfrak{u} (E))$ as 
$$ a = \sum_{j=1}^{k} \chi^{j}_{t} a + \left( 1 - \sum_{j=1}^{k} \chi^j_{t} \right) a , $$ 
where $\chi_{t}^{j}$ is the cut-off function around each $p_{j} \,(j=1 ,\dots ,k)$, 
defined in 
 Section 4.2.

Since we use the conformally-invariant norms, the same inequalities 
as \eqref{eq:wtf} and \eqref{wtf2} hold 
 on the regions in $M^{t}$, which are isomorphic to $X_{n_j} \, (j=1 , \dots , k)$ 
and $Z$, namely, 
we have  
\begin{equation*}
\begin{split}
 || \chi^j_{t} a ||_{L^{8}} 
+  || \nabla ( \chi^j_{t}  a ) ||_{L^{4}} 
\leq C_2  \, || L_{A_{X_{n_{j}}}} (\chi^{j}_{t}  a) ||_{L^{4}} 
\end{split}
\end{equation*}
for each $j = 1, 2, \dots , k$, and 
\begin{equation*}
\begin{split}
 || (1 -\sum_{j=1}^{k} \chi^{j}_{t})  a ||_{L^{8}} 
+ || \nabla (1 - \sum_{j=1}^{k} \chi^{j}_{t})   a ||_{L^{4}} 
\leq C_3 \, || L_{A_{Z}} (1 - \sum_{j=1}^{k} \chi^{j}_{t}) a  ||_{L^{4}}. 
\end{split}
\end{equation*}
Therefore, 
\begin{equation}
 \begin{split}
 || a ||_{L^{8}} 
&+  || \nabla a ||_{L^{4}} \\
&\leq C_2  \sum_{j=1}^{k}    
 || L_{A_{X_{n_j}}} ( \chi^{j}_{t} a ) ||_{L^{4}}  
  +  C_3 || L_{A_{Z}} ( 1- \sum_{j=1}^{k} \chi^{j}_{t}) a  ||_{L^{4}} \\ 
 &\leq C_2 
 \sum_{j=1}^{k} || \chi^{j}_{t} \left( L_{A_{X_{n_{j}}}} a \right) ||_{L^4} 
   + C \sum_{j=1}^{k} || d \chi^{j}_{t}  \wedge a ||_{L^4} \\
 & \qquad + C_3 \, || ( 1 - \sum_{j=1}^{k} \chi^{j}_{t} ) 
L_{A_Z} a ||_{L^4} +  C \sum_{j=1}^{k} || d \chi^{j}_{t} \wedge a ||_{L^4} . 
  \end{split}
\label{ineq:plinearprob}
\end{equation}
In order to prove Proposition \ref{prop:plinearprob}, we estimate each
 term of the final two lines of \eqref{ineq:plinearprob}.

From the H\"{o}lder inequality 
\begin{equation*}
\begin{split}
 ||  d \chi^j_{t} \wedge a ||_{L^4} 
 &\leq || d \chi^j_{t} ||_{L^8}   || a ||_{L^8} .  \\ 
\end{split}
\end{equation*}
Since $| d \chi^{j}_{t} | \sim \frac{1}{r | \log t |}$ on 
$r \in ( t^{\frac{5}{6}} , t^{\frac{3}{4}} )$, 
\begin{equation*}
  || d \chi^{j}_{t} ||_{L^8} 
\leq C 
\left( \int_{t^{\frac{5}{6}}}^{t^{\frac{3}{4}}} 
\left( \frac{1}{r | \log t | } \right)^8 
r^7 dr \right)^{\frac{1}{8}} 
\leq C  \left( \frac{1}{|\log \, t |} \right)^{\frac{7}{8}} . 
\end{equation*}
Hence $|| d \chi^j_{t} \wedge a ||_{L^4}$ has the order of 
$ O \left( |\log t |^{- \frac{7}{8}} 
\right)$.

On the other hand, we have 
\begin{equation*}
 L_{A_t} a 
=  L_{A_{X_{n_{j}}}} a 
- (1 - \chi^{j}_{t}) \pi^{2}_{7} \left( (A_{Z} - A_{X_{n_j}}) \wedge a \right) . 
\end{equation*}
Thus, 
\begin{equation*}
\chi^{j}_{t}  \left( L_{A_t} a \right) 
= \chi^{j}_{t} \left( L_{A_{X_{n_{j}}}} a \right) 
- \chi^{j}_{t} (1 - \chi^{j}_{t}) 
\pi^{2}_{7} \left( (A_{Z} - A_{X_{n_j}}) \wedge a \right) . 
\end{equation*}
Therefore, 
\begin{equation*}
 \begin{split}
 ||  \chi^{j}_{t} ( L_{A_{X_{n_j}}} a ) ||_{L^4}
&\leq ||  \chi^{j}_{t} ( L_{A_t} a )||_{L^4} + || 
\chi^{j}_{t} ( 1- \chi^{j}_{t}) 
\pi^{2}_{7} (
(A_{Z} - A_{X_{n_{j}}}) \wedge a) ||_{L^4} . 
 \end{split}
\end{equation*}
Here,  $| \chi^{j}_{t} ( 1- \chi^{j}_{t}) 
(A_{Z} - A_{X_{n_{j}}}) | $ has the order of $O (t^{6} r^{-7})$, thus, 
$|| \chi^{j}_{t} ( 1- \chi^{j}_{t}) \pi^{2}_{7} (
(A_{Z} - A_{X_{n_{j}}}) \wedge a )||_{L^4} 
= O (t) || a ||_{L^8}$.

Similarly, 
\begin{equation}
 \begin{split}
|| &(1 - \sum_{j=1}^{k} \chi^{j}_{t}) L_{A_{Z}} a ||_{L^4} \\
 &\leq || (1 - \sum_{j=1}^{k} \chi^{j}_{t} ) L_{A_t} a ||_{L^4} 
 + ||  (1 - \sum_{j=1}^{k} \chi^{j}_{t} ) 
 \sum_{j=1}^{k} (\chi^j_{t}  \pi^{2}_{7} 
( (A_{Z} - A_{X_{n_j}} ) \wedge a) ) ||_{L^4} . \\
 \end{split}
\label{eq:chiaz}
\end{equation}
Again, the last term of the right-hand side of  \eqref{eq:chiaz} has the order of  $O (t) \, ||a ||_{L^8}$. 
Hence Proposition \ref{prop:plinearprob} follows. 
\end{proof}

We now prove the following: 
\begin{proposition}
There exists a constant 
$C_5 >0$ independent of $t$ such that 
if $a \in L^{4}_{1} (\mathfrak{u} (E) \otimes \Lambda^1)$ 
is $L^2$-orthogonal to $K$, then 
the following holds for $t$ sufficiently small: 
\begin{equation*}
|| a ||_{L^{8}} + || \nabla a ||_{L^4}   
\leq C_5 \, || \tilde{L}_{A_{t}} a ||_{L^{4}},   
\end{equation*}
where $\tilde{L}_{A_{t}}$ is the linearized operator with respect to the torsion-free
 $Spin(7)$-structure 
$\tilde{\Omega}^{t}$ on $M$. 
\label{prop:linearprob}
\end{proposition}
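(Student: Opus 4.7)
The plan is to bootstrap from Proposition~\ref{prop:plinearprob}. Write
\[
\tilde{L}_{A_t} = L_{A_t} + E_t, \qquad E_t := \tilde{L}_{A_t} - L_{A_t},
\]
and observe that, since the projection $\pi^2_7$ (via \eqref{projomega}) and the codifferential $d_{A_t}^*$ both depend on the $Spin(7)$-structure only through the associated four-form and the metric it determines, $E_t$ is a first-order linear differential operator whose coefficients are controlled by $\tilde{\Omega}^t - \Omega^t$ and its first derivative. The strategy is to show $\|E_t a\|_{L^4} \le C t^{\alpha} (\|a\|_{L^8} + \|\nabla a\|_{L^4})$ for some $\alpha > 0$ and then absorb this error into the estimate of Proposition~\ref{prop:plinearprob}.

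To bound $E_t$ pointwise, note that the projection $\tilde{\pi}^2_7 - \pi^2_7$ acts on a two-form as multiplication by a tensor bounded (fiberwise) by $C|\tilde{\Omega}^t - \Omega^t|$, while the difference $d_{A_t}^{*,\tilde g^t} - d_{A_t}^{*,g^t}$ can be expanded as $-(\tilde * - *)\,d_{A_t} \tilde * a - \tilde * \, d_{A_t}((\tilde * - *) a)$, giving
\[
|E_t a| \;\le\; C\, |\tilde{\Omega}^t - \Omega^t|_{C^0} \, \big(|\nabla a| + |A_t||a|\big) \;+\; C\, |\nabla(\tilde{\Omega}^t - \Omega^t)| \, |a|.
\]
(The comparison between the Hodge stars and between the metrics reduces to that between the $Spin(7)$-structures, since the latter determines the former smoothly.) Using that the metrics $g^t$ and $\tilde g^t$ are uniformly equivalent for $t$ small, and that $\|A_t\|$ is controlled after taking into account the cut-off construction in Section~4.2, the $|A_t||a|$ term is absorbed into a constant times $|\nabla a| + |a|$ at worst.

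Taking $L^4$-norms and invoking Theorem~\ref{th:spinss} together with $\|\tilde{\Omega}^t - \Omega^t\|_{C^0} \le K t^{1/3}$, $\|\nabla(\tilde{\Omega}^t - \Omega^t)\|_{L^{10}} \le K t^{2/15}$, H\"older with $1/4 = 1/10 + 3/20$ gives
\[
\|E_t a\|_{L^4} \;\le\; C\, t^{1/3} \|\nabla a\|_{L^4} \;+\; C\, t^{2/15} \|a\|_{L^{20/3}}.
\]
Since $\mathrm{Vol}(M^t, g^t)$ is uniformly bounded as $t \to 0$, the inclusion $L^8 \hookrightarrow L^{20/3}$ has a $t$-independent norm, so $\|a\|_{L^{20/3}} \le C \|a\|_{L^8}$. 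Consequently $\|E_t a\|_{L^4} \le C\, t^{2/15} (\|a\|_{L^8} + \|\nabla a\|_{L^4})$.

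Finally, combine with Proposition~\ref{prop:plinearprob}:
\[
\|a\|_{L^8} + \|\nabla a\|_{L^4} \;\le\; C_4\,\|L_{A_t} a\|_{L^4} \;\le\; C_4\,\|\tilde{L}_{A_t} a\|_{L^4} + C_4 C\, t^{2/15}\big(\|a\|_{L^8} + \|\nabla a\|_{L^4}\big).
\]
Choosing $t$ small enough that $C_4 C\, t^{2/15} \le 1/2$, the last term is absorbed on the left, yielding the claim with $C_5 = 2 C_4$. The main obstacle is really the bookkeeping in the pointwise estimate on $E_t a$: carefully separating the contributions from $\tilde{\Omega}^t - \Omega^t$ and its derivative, and verifying that the $L^{20/3}$-interpolation constant and the coefficients depending on $A_t$ do not degenerate as $t \to 0$.
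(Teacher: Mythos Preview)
Your approach matches the paper's: write $\tilde{L}_{A_t} a = L_{A_t} a + S\cdot a + T\cdot\nabla a$ with $|S|$ controlled by $|\tilde{\Omega}^t - \Omega^t| + |\nabla(\tilde{\Omega}^t - \Omega^t)|$ and $|T|$ by $|\tilde{\Omega}^t - \Omega^t|$, then invoke Proposition~\ref{prop:plinearprob} and absorb (the paper uses H\"older $L^4 \le L^8\cdot L^8$ together with $\|\nabla(\tilde{\Omega}^t - \Omega^t)\|_{L^8} \le Ct^{2/15}$, obtained from the $L^{10}$ bound in Theorem~\ref{th:spinss} by finite volume, whereas you route through $L^{10}\cdot L^{20/3}$ and then $L^8\hookrightarrow L^{20/3}$; both yield the same conclusion). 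One correction: the $|A_t||a|$ term in your pointwise estimate is spurious and its handling is vague --- here $\nabla$ already denotes the full covariant derivative $\nabla_{A_t}$, so $|d_{A_t}\phi| \le C|\nabla\phi|$ with no separate connection factor, and attempting to bound the gauge-dependent quantity ``$|A_t|$'' uniformly in $t$ is neither needed nor well-posed; simply drop that term.
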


\begin{proof}
$\tilde{L}_{A_{t}} a $ may be written by 
\begin{equation*}
 \tilde{L}_{A_{t}} a 
 = L_{A_t} a   + S \cdot a + T \cdot  \nabla a ,
\end{equation*}
where $S$ and $T$ are tensor fields with 
\begin{gather*}
|S| \sim |\tilde{\Omega}^{t} -  \Omega^{t} | 
+ | \nabla (\tilde{\Omega}^{t} -  \Omega^{t} ) | , \\ 
|T| \sim |\tilde{\Omega}^{t} - \Omega^{t} |. 
\end{gather*}
Hence, 
\begin{equation*}
 \begin{split}
 C_4 || \tilde{L}_{A_{t}} a  ||_{L^4}
 &= C_4 || \tilde{L}_{A_{t}} a + S \cdot a 
+ T \cdot \nabla a ||_{L^4} \\ 
 &\geq C_4 || \tilde{L}_{A_{t}} a  ||_{L^4} 
 - C_4 || S \cdot  a ||_{L^4} 
 - C_4 || T \cdot  \nabla a ||_{L^4} \\ 
 & \geq  ( ||a ||_{L^8} + || \nabla a ||_{L^4} )  
 - C_4 || S ||_{L^8} ||a ||_{L^8} 
 - C_4 || T ||_{C^0} || \nabla a ||_{L^4} \\ 
 &= (1 - C_4  || S ||_{L^8} ) ||a ||_{L^8} 
   + (1 - C_4 || T ||_{C^{0}} ) || \nabla a ||_{L^4} .\\   
 \end{split}
\end{equation*}

From Theorem \ref{th:spinss}, we have  
\begin{equation}
 || \nabla (\tilde{\Omega}^{t} -  \Omega^{t}) ||_{L^8} \leq 
C t^{\frac{2}{15}}. 
\label{eq:edomega}
\end{equation}
Thus, from \eqref{ineq:o8}, \eqref{ineq:o0} and \eqref{eq:edomega}, we have 
\begin{equation*}
 ||S ||_{L^8} \leq C t^{\frac{2}{15}} , \quad || T ||_{C^0} \leq 
C t^{\frac{1}{3}} . 
\end{equation*}
Therefore, if we take $t$ small enough so that  
$ C_4 || S ||_{L^8} \leq \frac{1}{2} , 
\, C_4 || T ||_{C^0} \leq \frac{1}{2} $ hold, 
then we 
 obtain  
\begin{equation*}
 || a ||_{L^8} + || \nabla a ||_{L^4} 
\leq 2 C_4 || \tilde{L}_{A_{t}} a  ||_{L^4} .  
\end{equation*}
This completes the proof of Proposition \ref{prop:linearprob}. 
\end{proof}

The following is the direct corollary of Proposition
\ref{prop:linearprob}:

\begin{Corollary}
$ \ker \tilde{L}_{A_t} \cap K^{\perp} = \{ 0 \} $. 
Hence, 
$ \dim \ker \tilde{L}_{A_t} \leq \dim K 
= \dim \ker L_{Z} + \sum_{j=1}^{k} \dim \ker L_{X_{n_j}} $. 
\label{cor:ker}
\end{Corollary}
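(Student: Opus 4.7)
The plan is to derive the corollary directly from Proposition \ref{prop:linearprob}, using only elementary linear algebra after the hard analytic estimate has been established. First I would take an arbitrary element $a \in \ker \tilde{L}_{A_t} \cap K^{\perp}$, so that $\tilde{L}_{A_t} a = 0$ and $a$ is $L^2$-orthogonal to $K$. Since $a$ satisfies both hypotheses of Proposition \ref{prop:linearprob} (assuming $t$ is sufficiently small), I apply the estimate to obtain
\begin{equation*}
 || a ||_{L^{8}} + || \nabla a ||_{L^4} \leq C_5 \, || \tilde{L}_{A_{t}} a ||_{L^{4}} = 0,
\end{equation*}
which forces $a=0$ and proves the first assertion.

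For the dimension bound, I would use a standard argument: consider the $L^2$-orthogonal projection $\Pi_K : \Omega^1(M,\mathfrak{u}(E)) \to K$. The first part implies that the restriction $\Pi_K|_{\ker \tilde{L}_{A_t}}$ has trivial kernel, since any element in its kernel lies in $\ker \tilde{L}_{A_t} \cap K^{\perp} = \{0\}$. Hence $\Pi_K|_{\ker \tilde{L}_{A_t}}$ is injective, giving $\dim \ker \tilde{L}_{A_t} \leq \dim K$.

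Finally, from the definition
\begin{equation*}
K = K_{Z} \oplus \bigoplus_{j=1}^{k} K_{X_{n_j}}
\end{equation*}
as a direct sum, together with the defining property $\dim K_Z = \dim \ker L_{A_Z}$ and $\dim K_{X_{n_j}} = \dim \ker L_{A_{X_{n_j}}}$ (from the construction of these substitute kernels in Section 5.2), the equality $\dim K = \dim \ker L_{A_Z} + \sum_{j=1}^{k} \dim \ker L_{A_{X_{n_j}}}$ follows immediately. There is no real obstacle here; the entire analytic difficulty has been absorbed into the previous proposition, and this corollary is essentially a formal consequence, with the only subtlety being to ensure that $t$ is taken small enough that Proposition \ref{prop:linearprob} applies.
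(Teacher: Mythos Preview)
Your proposal is correct and is precisely the intended argument: the paper does not spell out a proof at all, simply declaring the result a ``direct corollary'' of Proposition~\ref{prop:linearprob}, and your derivation---applying the estimate to an $a$ in the intersection, then using injectivity of the $L^2$-projection onto $K$---is exactly how one unpacks that. There is nothing to add.
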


We now assume that the linearized operators $L_{A_{Z}} ,\, 
L_{A_{X_{n_{j}}}} \, ( j= 1, \dots ,k)$ satisfy the condition in 
Section 4.1, 
namely, the cohomology $H^{2} (Z , \mathfrak{u} (E))$ of the complex 
\eqref{eq:complex} vanishes, but $H^{0} (Z , \mathfrak{u}(E))$ of the
complex 
\eqref{eq:complex} does not necessarily vanish,  
and $ L_{A_{X_{n_j}}} : L_{1,
\delta}^{4} (\mathfrak{u} (E_{X_{n_{j}}}) \otimes \Lambda^{1} (X_{n_{j}}) ) 
\to L_{\delta -1}^{4}  
(\mathfrak{u} (E_{X_{n_{j}}}) \otimes ( \Lambda^{0} (X_{n_{j}}) \oplus \Lambda_{7}^{2} 
(X_{n_{j}}) ))
\, (j= 1, 2, \dots , k)$ 
for $\delta \in ( -7, 0)$ 
is surjective.

We then consider a finite dimensional vector space $C_Z$ in $ 
\Omega^0 (Z,  \mathfrak{u} (E_Z) )$, whose elements are 
supported away from $p_j  \, (j=1 , \dots , k)$ with the following 
properties: 
\begin{itemize} 
 \item $ \dim C_Z = \dim \ker (L_{A_Z}^{*})$  and 
 \item $ 
\Omega^0 (Z , \mathfrak{u}(E_Z) )  = 
C_{Z}^{\perp}  \oplus \ker (L_{A_Z}^{*})$, 
\end{itemize} 
where $C_{Z}^{\perp}$ is the $L^2$-orthogonal complement of $C_Z$ in 
$\Omega^0 (Z , \mathfrak{u}(E_Z) )$. 
Since all elements in $C_Z$ are supported on the region $M_{0}^{t}$ of
$Z \subset M$ for small $t$, we can think of $C_Z$ as lying in $ \Omega^0
 (M , \mathfrak{u}(E) )$. 
We choose this $C_Z$ in the following way.  
Firstly, by using the method of Proposition \ref{prop:linearprob}, 
one can show that there exists a constant $C_6 >0$ such
that if $a \in L^{4}_{1} (\mathfrak{u}(E) \otimes \Lambda^1)$ with $a
\perp K$, then the following holds for $t$ sufficiently small: 
\begin{equation}
 ||a||_{L^2} \leq C_6
|| \tilde{L}_{A_t} a ||_{L^2} . 
\label{eq:cz1}
\end{equation}
We then choose $C_Z$ such that the following holds for all $c \in C_Z$: 
\begin{equation*}
 || L_{A_Z}^{*} c ||_{L^2}\leq \frac{1}{4 C_6} 
|| c ||_{L^2} . 
\end{equation*} 
This holds, provided $C_Z$ is sufficiently close to $\ker L_{A_Z}^{*}$
in $L^{2}_{1}$. 
Note that, by taking $t$ sufficiently small, we obtain 
\begin{equation}
 || \tilde{L}_{A_t}^{*} c ||_{L^2}\leq \frac{1}{2 C_6} 
|| c ||_{L^2} 
\label{eq:cz2}
\end{equation}
for all $c \in C_{Z}$. 
We will use $C_Z$ as a substitute for the kernel of $L_{A_Z}^{*}$, which
also makes sense on $M$. 

Now, as for Lemmas \ref{lem:fxnj} and \ref{lem:fz}, we have:  
\begin{lemma}
There exists a constant 
$C >0 $ such that the following holds for any 
$( a_{X_{n_j}}, b_{X_{n_j}}) \in L^{4}_{1, -1} 
\left( \mathfrak{u}(E_{X_{n_j}}) \otimes 
( \Lambda^0 (X_{n_j}) \oplus \Lambda^{2}_{7} (X_{n_j}) )
\right)$: 
\begin{equation*}
  || a_{X_{n_j}}||_{L^{8}} + || \nabla a_{X_{n_j}} ||_{L^{4}} 
+ || b_{X_{n_j}}||_{L^{8}} + || \nabla b_{X_{n_j}} ||_{L^{4}} 
\leq C || L_{A_{X_{n_{j}}}}^{*} ( a_{X_{n_j}}, b_{X_{n_j}}) ||_{L^{4}}. 
\end{equation*}
\end{lemma}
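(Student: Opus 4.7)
The plan is to adapt the argument of Lemma \ref{lem:fxnj} to the adjoint operator $L^*_{A_{X_{n_j}}}$. First, I would note that $L^*_{A_{X_{n_j}}}$ is itself a twisted Dirac operator, merely exchanging the roles of $S^+$ and $S^-$ in \eqref{eq:spinor}, so Proposition \ref{prop:fredholm} applies verbatim: for any $\delta \notin \mathcal{D}$, the operator
\begin{equation*}
L^*_{A_{X_{n_j}}} : L^{4}_{1, \delta}\bigl(\mathfrak{u}(E_{X_{n_j}}) \otimes (\Lambda^0 \oplus \Lambda^2_7)\bigr) \to L^{4}_{\delta-1}\bigl(\mathfrak{u}(E_{X_{n_j}}) \otimes \Lambda^1\bigr)
\end{equation*}
is Fredholm. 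Since $\mathcal{D} = \{0, 1, 2, \ldots\} \cup \{-7, -8, -9, \ldots\}$, the weight $\delta = -1$ is admissible, giving the Fredholm property on the spaces appearing in the statement.

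Next I would show that this operator is injective on $L^4_{1,-1}$. By the $L^2$-pairing and elliptic regularity, any $(a,b) \in \ker L^*_{A_{X_{n_j}}}$ annihilates $\langle L_{A_{X_{n_j}}}\alpha,(a,b)\rangle$ for all sufficiently nice test sections $\alpha$, and hence annihilates the full image of $L_{A_{X_{n_j}}}$ acting on the appropriate weighted Sobolev space. By the surjectivity assumption of Section 4.1, $L_{A_{X_{n_j}}} : L^4_{1,\delta} \to L^4_{\delta-1}$ is onto for every $\delta \in (-7, 0)$; taking $\delta$ to be the Lockhart--McOwen dual of $-1$ (which lies in $(-7, 0)$ under the $\rho^{-8}\,dV$-pairing convention of Section 5.1), this forces $(a,b) = 0$. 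Thus $L^*_{A_{X_{n_j}}}$ on $L^4_{1,-1}$ is a Fredholm map with trivial kernel, hence has closed range, and admits the estimate
\begin{equation*}
\|(a,b)\|_{L^4_{1,-1}} \leq C \|L^*_{A_{X_{n_j}}}(a,b)\|_{L^4_{-2}}
\end{equation*}
with no orthogonality condition imposed. This is the essential feature that replaces the use of $K_{X_{n_j}}$ in Lemma \ref{lem:fxnj}.

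Finally I would convert this to the scale-invariant form using the Sobolev embedding $L^4_{1,-1} \hookrightarrow L^8_{-1}$ together with the identifications $\|\cdot\|_{L^8_{-1}} = \|\cdot\|_{L^8}$ and $\|\cdot\|_{L^4_{-2}} = \|\cdot\|_{L^4}$ noted in Section 5.1, exactly as in the closing steps of Lemma \ref{lem:fxnj}. Applied to both components $a$ and $b$ simultaneously, this produces the claimed inequality.

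The main obstacle I expect is the weight-duality bookkeeping: one must confirm that in the Lockhart--McOwen pairing with measure $\rho^{-8}\,dV$, the weight $-1$ appearing on the domain of $L^*_{A_{X_{n_j}}}$ genuinely pairs to a weight in $(-7, 0)$, so that the surjectivity hypothesis on $L_{A_{X_{n_j}}}$ implies triviality of the kernel of the adjoint on the space where we need the estimate. Once this numerology is verified, the rest of the argument is a direct adaptation of Lemma \ref{lem:fxnj}.
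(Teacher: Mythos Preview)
Your proposal is correct and matches the paper's approach: the paper merely states that this lemma follows ``as for Lemmas \ref{lem:fxnj} and \ref{lem:fz}'' without further detail, and you have correctly supplied the one extra ingredient the paper leaves implicit---namely, that the surjectivity hypothesis on $L_{A_{X_{n_j}}}$ from Section 4.1 forces $\ker L^*_{A_{X_{n_j}}} = 0$ on the relevant weighted space, which is why no orthogonality condition appears in the statement. Your caution about the weight-duality bookkeeping is well placed but resolves cleanly: under the paper's convention the $L^2$-dual weight to $\delta-1$ is $-7-\delta$, which remains in $(-7,0)$ whenever $\delta$ does, and Proposition \ref{prop:fredholm} then transfers injectivity to $L^4_{1,-1}$.
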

\begin{lemma} 
There exists a constant 
$C > 0$ such that the following holds for any 
$(a_Z, b_Z) \in L^{4}_{1} 
\left( \mathfrak{u}(E_{Z}) \otimes ( \Lambda^0 (Z) \oplus \Lambda^{2}_{7} (Z) ) 
\right)$  
with $a_Z \perp C_{Z}$: 
\begin{equation*}
  || a_Z ||_{L^{8}} + || \nabla a_Z ||_{L^4} 
+ || b_Z ||_{L^{8}} + || \nabla b_Z  ||_{L^4} 
\leq C  || L_{A_{Z}}^{*} (a_Z , b_Z )||_{L^{4}}. 
\end{equation*}
\end{lemma}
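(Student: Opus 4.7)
The plan is to mirror the argument that establishes the analogous bound for $L_{A_Z}$ itself in the preceding lemma, but applied to the formal adjoint $L_{A_Z}^{*}$, and to exploit the standing assumption $H^{2}(Z,\mathfrak{u}(E))=0$. Since $\ker L_{A_Z}^{*}$ is naturally isomorphic to the cokernel of $L_{A_Z}$, which by hypothesis lies in $\Omega^{0}(Z,\mathfrak{u}(E_Z))$, the entire kernel of $L_{A_Z}^{*}$ sits inside the $\Omega^{0}$ factor. This is precisely why the orthogonality condition $a_Z \perp C_Z$ need only be imposed on the $a_Z$-component.

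First I would invoke the standard Fredholm estimate for the elliptic operator $L_{A_Z}^{*} : \Omega^{0} \oplus \Omega^{2}_{7} \to \Omega^{1}$ on the compact Calabi--Yau four-orbifold $Z$. Ellipticity follows because $L_{A_Z}$ is elliptic by \cite{Reyes_Carrion98}, hence so is $L_{A_Z}^{*}$, and the standard theory on compact orbifolds gives Fredholmness together with the estimate
\begin{equation*}
|| (u,v) ||_{L_{1}^{4}} \leq C \, || L_{A_Z}^{*}(u,v) ||_{L^{4}}
\end{equation*}
for any $(u,v) \in L_{1}^{4}(\mathfrak{u}(E_Z) \otimes (\Lambda^{0} \oplus \Lambda^{2}_{7}))$ that is $L^{2}$-orthogonal to $\ker L_{A_Z}^{*}$.

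Next I would convert the hypothesis $a_Z \perp C_Z$ into near-orthogonality to the true kernel. Let $P$ denote the $L^{2}$-orthogonal projection $\Omega^{0} \to \ker L_{A_Z}^{*}$; then $((1-P)a_Z, b_Z)$ is orthogonal to $\ker L_{A_Z}^{*}$ and has the same image under $L_{A_Z}^{*}$ as $(a_Z, b_Z)$, so the Fredholm estimate controls $((1-P)a_Z, b_Z)$ in $L^{4}_{1}$. It remains to bound $Pa_Z \in \ker L_{A_Z}^{*}$. Pairing the identity $\langle a_Z, c \rangle = 0$ ($c \in C_Z$) with the decomposition $c = Pc + (1-P)c$ and using self-adjointness of $P$ yields $\langle P a_Z, Pc \rangle = - \langle (1-P)a_Z, (1-P)c \rangle$. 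Because $C_Z$ was chosen sufficiently close to $\ker L_{A_Z}^{*}$, the restriction $P|_{C_Z} : C_Z \to \ker L_{A_Z}^{*}$ is a near-identity isomorphism, so letting $c$ range over $C_Z$ bounds $|| Pa_Z ||_{L^{2}}$ by a small multiple of $|| (1-P)a_Z ||_{L^{2}}$. Since $\ker L_{A_Z}^{*}$ is finite-dimensional and consists of smooth sections, all norms on it are equivalent, so $|| Pa_Z ||_{L^{4}_{1}}$ is in turn controlled by $|| L_{A_Z}^{*}(a_Z, b_Z) ||_{L^{4}}$.

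Adding the two contributions yields $|| (a_Z, b_Z) ||_{L_{1}^{4}} \leq C \, || L_{A_Z}^{*}(a_Z, b_Z) ||_{L^{4}}$, and the Sobolev embedding $L_{1}^{4} \hookrightarrow L^{8}$ in dimension eight (sharp since $\tfrac{1}{4} - \tfrac{1}{8} = \tfrac{1}{8}$) converts the $L^{4}$ control on $a_Z$ and $b_Z$ into the required $L^{8}$ control, while the $L^{4}$ bound on $\nabla a_Z, \nabla b_Z$ is already present. The main technical hurdle is the kernel-substitution step: one must verify that $C_Z$ can be taken close enough to $\ker L_{A_Z}^{*}$ that $P|_{C_Z}$ is a quantitative near-identity isomorphism with a fixed small constant, and that this smallness is independent of the eventual gluing parameter $t$. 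This is precisely what the quantitative closeness in the construction of $C_Z$ at the end of Section 5.2 is designed to ensure, so once that setup is in hand the argument goes through uniformly in $t$.
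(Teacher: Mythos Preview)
The paper does not give a proof of this lemma; it simply asserts that it follows ``as for Lemmas~\ref{lem:fxnj} and~\ref{lem:fz}'' (the analogous estimates for $L_{A_{X_{n_j}}}$ and $L_{A_Z}$). Your approach is correct and is precisely the fleshing-out of what the paper leaves implicit: ellipticity and Fredholmness of $L_{A_Z}^{*}$ on the compact orbifold $Z$, the fact that $\ker L_{A_Z}^{*}\subset \Omega^{0}$ (from the standing assumption $H^{2}(Z,\mathfrak{u}(E))=0$), the substitution of $C_Z$ for the actual kernel, and the Sobolev embedding $L^{4}_{1}\hookrightarrow L^{8}$ in dimension eight.

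Two small remarks. First, in the kernel-substitution step you do not need $\|Pa_Z\|$ to be a \emph{small} multiple of $\|(1-P)a_Z\|$; a bounded multiple suffices, and that already follows from the transversality condition $\Omega^{0}(Z,\mathfrak{u}(E_Z)) = C_Z^{\perp}\oplus \ker L_{A_Z}^{*}$ together with $\dim C_Z=\dim\ker L_{A_Z}^{*}$, which forces $C_Z^{\perp}\cap\ker L_{A_Z}^{*}=\{0\}$ and makes $P|_{C_Z}$ an isomorphism with bounded inverse. No additional quantitative closeness of $C_Z$ to the kernel is needed for this particular lemma. Second, your concern about $t$-uniformity is misplaced here: the statement involves only $Z$, $E_Z$, $A_Z$, and the fixed finite-dimensional subspace $C_Z$, none of which depend on the gluing parameter $t$; the constant $C$ is therefore automatically $t$-independent. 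The $t$-uniformity issues arise only in the subsequent Proposition~\ref{prop:wtfd}, where the pieces are patched together on $M^{t}$.
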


Since the argument for 
Proposition \ref{prop:linearprob} also works 
for the formal adjoints $ L_{A_{Z}}^{*}, 
L_{A_{X_{n_j}}}^{*} (j= 1, \dots , k)$, and 
$\tilde{L}_{A_{t}}^{*}$,  therefore we obtain the 
following: 
\begin{proposition}
There exists a constant 
$C_7 > 0$ 
independent of $t$ such that 
if  $(a, b) \in L^{4}_{1}
\left( 
\mathfrak{u}(E) \otimes ( \Lambda^0 (M) \oplus \Lambda^{2}_{7} (M) ) 
\right)$
with $a \perp C_{Z}$, then 
\begin{equation*} 
|| a||_{L^{8}} + || \nabla a ||_{L^{4}} 
+ || b ||_{L^{8}} + || \nabla b  ||_{L^{4}} 
\leq C_7 || \tilde{L}_{A_{t}}^{*} (a , b )||_{L^{4}}. 
\end{equation*}
\label{prop:wtfd} 
\end{proposition}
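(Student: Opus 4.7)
The plan is to adapt the proof of Proposition \ref{prop:linearprob} essentially verbatim, using the two adjoint lemmas stated immediately above in place of Lemmas \ref{lem:fxnj} and \ref{lem:fz}, and then passing from $L_{A_t}^{*}$ to $\tilde{L}_{A_t}^{*}$ by exactly the same perturbation argument. First, I would cut off $(a,b)$ using the partition $1 = \sum_{j=1}^{k} \chi_t^{j} + (1 - \sum_{j=1}^{k}\chi_t^{j})$ from Section 4.2. Each piece $\chi_t^{j}(a,b)$ is supported in the ALE region isomorphic to a subset of $X_{n_j}$, so the ALE adjoint lemma applies with no orthogonality condition; this unconditional estimate is precisely what the surjectivity hypothesis on $L_{A_{X_{n_j}}}$ on weighted $L^4_{1,\delta}$ buys us, since by duality it forces the formal adjoint to have trivial kernel on the dual weighted space. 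For the complementary piece $(1 - \sum_{j}\chi_t^{j})(a,b)$, which lives on the $Z$ side, I invoke the $Z$-side adjoint lemma; this requires $(1 - \sum_{j}\chi_t^{j})a \perp C_Z$ in $L^2$, which follows from the hypothesis $a \perp C_Z$ because each element of $C_Z$ is supported in $M_0^{t}$ away from the annular regions where the $\chi_t^{j}$ vary, so $\chi_t^{j} \equiv 0$ on $\mathrm{supp}(C_Z)$ for $t$ small.

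Combining the two local estimates and using Leibniz to rewrite $L_{A_{X_{n_j}}}^{*}(\chi_t^{j}(a,b))$ as $\chi_t^{j} L_{A_{X_{n_j}}}^{*}(a,b)$ plus a commutator involving $d\chi_t^{j}$, the commutator contributes a term bounded by $\|d\chi_t^{j}\|_{L^{8}}(\|a\|_{L^{8}} + \|b\|_{L^{8}})$. The estimate $\|d\chi_t^{j}\|_{L^{8}} = O(|\log t|^{-7/8})$ computed in Proposition \ref{prop:plinearprob} lets this term be absorbed into the left-hand side for small $t$. Replacing $L_{A_{X_{n_j}}}^{*}$ and $L_{A_Z}^{*}$ by $L_{A_t}^{*}$ produces zero-order error terms of the schematic form $\pi_{7}^{2}((A_Z - A_{X_{n_j}}) \wedge \,\cdot\,)$ on the gluing annuli; since $|A_Z - A_{X_{n_j}}| = O(t^{6} r^{-7}) = O(t)$ there, these are of operator norm $O(t)$ from $L^{8}$ into $L^{4}$ and are absorbed for $t$ small, yielding
\begin{equation*}
\|a\|_{L^{8}} + \|\nabla a\|_{L^{4}} + \|b\|_{L^{8}} + \|\nabla b\|_{L^{4}} \leq C \, \|L_{A_t}^{*}(a,b)\|_{L^{4}}.
\end{equation*}

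Finally, the passage from $L_{A_t}^{*}$ to $\tilde{L}_{A_t}^{*}$ proceeds as in Proposition \ref{prop:linearprob}: the difference has the schematic form $S \cdot (a,b) + T \cdot \nabla(a,b)$ with $|S|$ controlled by $|\tilde{\Omega}^{t} - \Omega^{t}| + |\nabla(\tilde{\Omega}^{t} - \Omega^{t})|$ and $|T|$ by $|\tilde{\Omega}^{t} - \Omega^{t}|$. Theorem \ref{th:spinss} together with the interpolation argument of the lemma preceding Proposition \ref{prop:linearprob} give $\|S\|_{L^{8}} = O(t^{2/15})$ and $\|T\|_{C^{0}} = O(t^{1/3})$, so for $t$ sufficiently small the error terms are absorbed into the left-hand side, producing the uniform constant $C_7$.

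The main obstacle I anticipate is verifying that the ALE adjoint lemma truly holds without any orthogonality condition: one must pass from the assumed surjectivity of $L_{A_{X_{n_j}}}$ on the weighted space $L^{4}_{1,\delta}$ for $\delta \in (-7,0)$ to a uniform scale-invariant bound on $L_{A_{X_{n_j}}}^{*}$. Concretely, surjectivity plus the closed range theorem yields a right inverse with bounded operator norm in the weighted $L^{4}$-pairing, and dualizing gives injectivity with a uniform estimate on the adjoint; one then checks that the weighted norms with the particular weight $\delta = -1$ reduce to the scale-invariant norms $\|\cdot\|_{L^{8}}$ and $\|\nabla\cdot\|_{L^{4}}$ used throughout, exactly as observed in Section 5.1. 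Once this is in hand, the rest of the argument is essentially bookkeeping parallel to Proposition \ref{prop:linearprob}.
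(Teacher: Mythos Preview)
Your proposal is correct and follows exactly the approach the paper intends: the paper's own ``proof'' is the single sentence that the argument for Proposition \ref{prop:linearprob} also works for the formal adjoints $L_{A_Z}^{*}$, $L_{A_{X_{n_j}}}^{*}$, and $\tilde{L}_{A_t}^{*}$, using the two adjoint lemmas in place of Lemmas \ref{lem:fxnj} and \ref{lem:fz}. Your write-up supplies the details the paper omits, including the check that $(1-\sum_j \chi_t^j)a \perp C_Z$ (which indeed holds because $C_Z$ is supported away from the $p_j$) and the reason the ALE adjoint lemma requires no orthogonality hypothesis.
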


We also have the following: 
\begin{proposition}
\begin{equation}
 L^4 \left( \mathfrak{u} (E) \otimes 
(\Lambda^0 \oplus \Lambda^{2}_{7} ) \right) 
= C_Z \oplus \tilde{L}_{A_t} 
\left( K^{\perp} \cap L^{4}_{1} 
(\mathfrak{u} (E) \otimes \Lambda^1 ) \right).
\label{eq:coker}
\end{equation}
\end{proposition}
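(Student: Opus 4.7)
Set $\Phi \colon (K^\perp \cap L^4_1) \oplus C_Z \to L^4(\mathfrak{u}(E) \otimes (\Lambda^0 \oplus \Lambda^2_7))$ by $\Phi(a, c) = \tilde{L}_{A_t} a + c$. The proposition is equivalent to the assertion that $\Phi$ is a bijection, which I plan to establish in two stages: directness of the sum (i.e.\ injectivity of $\Phi$) and surjectivity onto $L^4$.

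For injectivity, I would run the standard pairing argument. If $\tilde{L}_{A_t} a = -c$ with $a \in K^\perp \cap L^4_1$ and $c \in C_Z$, then pairing against $(c, 0) \in L^2(\Lambda^0 \oplus \Lambda^2_7)$ gives
\begin{equation*}
 \|c\|_{L^2}^2 = -\langle a, \tilde{L}_{A_t}^*(c, 0)\rangle_{L^2} \leq \|a\|_{L^2}\,\|\tilde{L}_{A_t}^*(c, 0)\|_{L^2}.
\end{equation*}
Applying \eqref{eq:cz2} to $c$ and \eqref{eq:cz1} to $a$ (using $\|\tilde{L}_{A_t} a\|_{L^2} = \|c\|_{L^2}$) collapses this to $\|c\|_{L^2}^2 \leq \tfrac{1}{2}\|c\|_{L^2}^2$, forcing $c = 0$; then Proposition~\ref{prop:linearprob} yields $a = 0$. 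This simultaneously gives $\ker \Phi = 0$ and the intersection condition $C_Z \cap \tilde{L}_{A_t}(K^\perp \cap L^4_1) = \{0\}$ required for the asserted sum to be direct.

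For surjectivity, I first note that Proposition~\ref{prop:linearprob} makes $\tilde{L}_{A_t}$ restricted to $K^\perp \cap L^4_1$ bounded below, hence of closed range, and adding the finite-dimensional $C_Z$ (which meets this range only at zero, by the previous step) keeps $\operatorname{Im}\Phi$ closed. To finish, I would compute the Fredholm index of $\Phi$. Since $\tilde{L}_{A_t}$ is elliptic on the closed manifold $M$, an excision/gluing computation identifies its index with the sum of the indices on the pieces,
\begin{equation*}
\operatorname{ind}\tilde{L}_{A_t} = \operatorname{ind} L_{A_Z} + \sum_{j=1}^{k} \operatorname{ind}_{L^4_{1, \delta}} L_{A_{X_{n_j}}},
\end{equation*}
which, using the hypotheses of Section 4.1 on the cokernels of the pieces and the defining properties of $C_Z$ and $K$, equals $\dim K - \dim C_Z$. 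A short block computation then gives $\operatorname{ind}\Phi = \operatorname{ind}(\tilde{L}_{A_t}|_{K^\perp}) + \dim C_Z = 0$; combined with injectivity and closed image this forces $\operatorname{Im}\Phi = L^4$.

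The main obstacle will be justifying the excision identity for $\operatorname{ind}\tilde{L}_{A_t}$: one must match the bulk $L^4_1$-theory on $M^t$ with the weighted theory on each ALE piece $X_{n_j}$ through the cutoff construction of Section 4.2, and confirm that the weight $\delta \in (-7, 0)$ from Section 4.1 reproduces the kernel/cokernel data entering the sum. An alternative route avoiding the explicit index calculation is a Hahn--Banach density argument: any $\phi$ in the $L^2$-annihilator of $\operatorname{Im}\Phi$ satisfies $\tilde{L}_{A_t}^*\phi \in K$ distributionally and $\phi|_{\Lambda^0} \perp C_Z$, and one would combine Proposition~\ref{prop:wtfd} (upgrading $\phi$ to smooth and controlling its norms) with a Fredholm duality identifying $\operatorname{coker}\tilde{L}_{A_t}$ with $C_Z$ to rule out nonzero $\phi$.
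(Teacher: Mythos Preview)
Your proposal is correct and follows essentially the same route as the paper: the directness of the sum is obtained via the identical $L^2$ pairing argument combining \eqref{eq:cz1} and \eqref{eq:cz2}, and the surjectivity is deduced from the same index-gluing identity $\operatorname{ind}\tilde{L}_{A_t}=\operatorname{ind}L_{A_Z}+\sum_j\operatorname{ind}L_{A_{X_{n_j}}}=\dim K-\dim C_Z$, which the paper likewise invokes without further proof. The only cosmetic difference is packaging: you phrase the conclusion as ``$\operatorname{ind}\Phi=0$ and $\ker\Phi=0$ imply $\Phi$ onto'', whereas the paper directly computes the codimension of $\tilde{L}_{A_t}(K^\perp)$ in $L^4$ as $(\dim K-\dim\ker\tilde{L}_{A_t})+\dim\ker\tilde{L}_{A_t}^*=\dim C_Z$ and then matches it with the already-established transversal $C_Z$.
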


\begin{proof}
Firstly, we prove $C_{Z} \cap \tilde{L}_{A_{t}} \left( K^{\perp} \cap L^{4}_{1} 
(\mathfrak{u} (E) \otimes \Lambda^1 ) \right) = \{ 0 \}$.  
Suppose for a contradiction that there exists 
$a \in  K^{\perp} \cap L^{4}_{1} 
(\mathfrak{u} (E) \otimes \Lambda^1 )$ such that $\tilde{L}_{A_t} a = c 
$ for some $c \in C_{Z}$ with $c \neq 0$. Then we have 
\begin{equation*}
 || c ||_{L^2}^{2}
= \langle c , c  \rangle 
= \langle c , \tilde{L}_{A_t} a \rangle 
= \langle \tilde{L}_{A_t}^{*} c , a \rangle 
\leq || \tilde{L}_{A_t}^{*} c ||_{L^2} || a ||_{L^2} . 
\end{equation*}
Thus, from \eqref{eq:cz1} and \eqref{eq:cz2}, we get 
$$ || c ||_{L^2}^{2} \leq \frac{1}{2} || c ||_{L^2}^{2} .$$
This is a contradiction. Hence  
$C_{Z} \cap \tilde{L}_{A_{t}} \left( K^{\perp} \cap L^{4}_{1} 
(\mathfrak{u} (E) \otimes \Lambda^1 ) \right) = \{ 0 \}$.

Next, by using the index theory, one can obtain that 
$$ \text{Ind} ( \tilde{L}_{A_t} ) 
= \text{Ind} \, (L_{Z}) + \sum_{j=1}^{k} 
\text{Ind} \, (L_{X_{n_j}}).
$$ 
Hence, 
\begin{equation}
 \text{Ind} ( \tilde{L}_{A_t} ) 
=  \left( \dim K_Z - \dim C_{Z} \right) 
     + \sum_{j=1}^{k} \dim K_{X_{n_j}} 
= \dim K - \dim C_{Z} . 
\label{eq:indexc}
\end{equation}
On the other hand, $\tilde{L}_{A_t} (K^{\perp}) \subset \text{Im} \, \tilde{L}_{A_t}$ 
has the codimension 
$$ \dim K - \dim \ker \tilde{L}_{A_t}, $$ 
and $\text{Im} \, \tilde{L}_{A_t} \subset 
L^4 \left( \mathfrak{u} (E) \otimes 
(\Lambda^0 \oplus \Lambda^{2}_{7} ) \right) $
 has the codimension $\dim \ker \tilde{L}_{A_t}^{*}$. 
Thus, the codimension of $\tilde{L}_{A_t} (K^{\perp}) \subset 
L^4 \left( \mathfrak{u} (E) \otimes 
(\Lambda^0 \oplus \Lambda^{2}_{7} ) \right) $ is 
$$ ( \dim K - \dim \ker \tilde{L}_{A_t} ) 
+ \dim \ker \tilde{L}_{A_t}^{*}. $$ 
This is $ \dim C_{Z}$ by \eqref{eq:indexc}. 
Hence,  \eqref{eq:coker} holds. 
\end{proof}

\section{Construction}

In this section, we prove the following: 
\begin{theorem} 
Let $M=M^t$ be the torsion-free $Spin(7)$-manifold in Section 2, that 
is, $M$ is 
a desingularization of a Calabi--Yau four-orbifold $Y$ with finitely 
many singular points and an anti-holomorphic involution fixing only the 
singular set by gluing ALE $Spin(7)$-manifold 
$X_{n_j} \, (j= 1,2, \dots , k)$ at 
each singular points $p_{j} \, (j = 1,2, \dots , k)$. 
Assume that there are Hermitian--Einstein connections on 
$Y$ and $W_{n_j}$'s satisfying the conditions in Section 4.1. 
Then there exists a $Spin(7)$-instanton on 
a vector bundle $E$ over $M =M^t$ for $t$ sufficiently small. 
\label{main} 
\end{theorem}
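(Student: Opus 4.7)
The plan is to perturb the approximate solution $A_t$ of Section 4.2 into a genuine Spin(7)-instanton via a Banach contraction mapping, with the non-surjectivity of the linearized operator handled by a finite-dimensional reduction onto the complementary subspace $C_Z$. Writing a candidate connection as $A_t + a$ with $a \in \Omega^1(\mathfrak{u}(E))$ and imposing a Coulomb-type gauge condition $d_{A_t}^* a = 0$, the Spin(7)-instanton equation $\tilde{\pi}^2_7(F_{A_t+a}) = 0$ rearranges to the system
$$
\tilde{L}_{A_t}(a) \;=\; -\bigl(0,\ \tilde{\pi}^2_7(F_{A_t})\bigr) - \bigl(0,\ \tilde{\pi}^2_7(a \wedge a)\bigr).
$$

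By the direct-sum decomposition $L^4 = C_Z \oplus \tilde{L}_{A_t}(K^\perp \cap L^4_1)$ established at the end of Section 5.2, let $\Pi$ denote the projection along $C_Z$ onto the image of $\tilde{L}_{A_t}$, and let $G_t$ denote the right inverse to $\tilde{L}_{A_t}|_{K^\perp \cap L^4_1}$ on its image. Proposition \ref{prop:linearprob} supplies the $t$-uniform bound $\|G_t f\|_{L^8} + \|\nabla G_t f\|_{L^4} \leq C_4 \|f\|_{L^4}$. I would define the iteration
$$
T(a) \;:=\; -\,G_t \circ \Pi\bigl(0,\ \tilde{\pi}^2_7(F_{A_t}) + \tilde{\pi}^2_7(a \wedge a)\bigr)
$$
on the closed ball $B_\epsilon := \{a \in K^\perp \cap L^4_1 : \|a\|_{L^8} + \|\nabla a\|_{L^4} \leq \epsilon\}$. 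Combining Proposition \ref{prop:error} (giving $\|\tilde{\pi}^2_7(F_{A_t})\|_{L^4} \leq C_1 t^{1/3}$) with the pointwise, scale-invariant multiplication estimate $\|\tilde{\pi}^2_7(a \wedge a')\|_{L^4} \leq C\|a\|_{L^8}\|a'\|_{L^8}$ yields
\begin{align*}
\|T(a)\|_{L^8 \cap L^4_1} &\;\leq\; C\bigl(C_1 t^{1/3} + \epsilon^2\bigr), \\
\|T(a_1) - T(a_2)\|_{L^8 \cap L^4_1} &\;\leq\; 2C\epsilon\,\|a_1 - a_2\|_{L^8}.
\end{align*}
Choosing $\epsilon = 2CC_1 t^{1/3}$ and then $t$ small enough that $C\epsilon^2 \leq CC_1 t^{1/3}$ and $2C\epsilon < 1$, $T$ self-contracts $B_\epsilon$, and Banach's fixed-point theorem produces a fixed point $a_t$.

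At this fixed point, the discrepancy $\tilde{L}_{A_t}(a_t) + \bigl(0,\ \tilde{\pi}^2_7(F_{A_t})+\tilde{\pi}^2_7(a_t \wedge a_t)\bigr) = (\operatorname{Id}-\Pi)\bigl(0,\ \tilde{\pi}^2_7(F_{A_t})+\tilde{\pi}^2_7(a_t \wedge a_t)\bigr)$ lies in $C_Z$, and since $C_Z \subset \Omega^0(\mathfrak{u}(E))$ has zero $\Omega^2_7$-component, reading off the $\Omega^2_7$-slot of the identity gives exactly $\tilde{\pi}^2_7(F_{A_t + a_t}) = 0$. Thus $A_t + a_t$ is the desired Spin(7)-instanton on $E$; the 0-form slot merely delivers the relaxed gauge $d^*_{A_t} a_t \in C_Z$, a finite-dimensional slackness that has no bearing on the instanton equation. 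Smoothness of $a_t$ follows by standard elliptic bootstrapping from the Dirac-type operator $\tilde{L}_{A_t} \oplus \tilde{L}_{A_t}^*$.

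The main obstacle I anticipate is the $t$-uniformity of the operator norms of $G_t$ and $\Pi$: the neck region of $M^t$ degenerates as $t \to 0$, and nothing guarantees a priori that the right inverse stays bounded. This is precisely the content of Propositions \ref{prop:linearprob} and \ref{prop:wtfd}, whose proofs rely on the scale-invariant choice of norms ($L^8$ on one-forms, $L^4$ on two-forms on an 8-manifold) and on the cut-off estimate $\|d\chi^j_t\|_{L^8} = O(|\log t|^{-7/8})$ to absorb the gluing error uniformly in $t$. Once that uniformity is in hand, the contraction step is routine.
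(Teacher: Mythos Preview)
Your proposal is correct and follows essentially the same route as the paper: the paper's Section 6.1 carries out the Picard iteration $\tilde{L}_{A_t} a_t^{k+1} = (c_t^k,\, -\tilde{\pi}^2_7(F_{A_t}) - \tilde{\pi}^2_7(a_t^k \wedge a_t^k))$ explicitly and shows it is Cauchy (Lemma~\ref{lem:ind}), which is precisely the contraction-mapping argument you outline; the $t$-uniform bound on your projection $\Pi$ is the content of the paper's Lemma~\ref{estcoker}, proved from Proposition~\ref{prop:wtfd} exactly as you anticipate. The only cosmetic difference is that the paper never names $\Pi$ or $G_t$ but instead tracks the $C_Z$-component $c_t^k$ alongside $a_t^k$ throughout the iteration.
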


In Section 6.1, we find a $Spin(7)$-instanton $A_t + a_{t}$ in $L_{1}^{4}$ 
by an iterative method, using the estimates in Section 4 and 5.  
The regularity of the solution is given in Section 6.2.

\subsection{Inductive construction}

The equation we would like to solve is 
\begin{equation}
 \tilde{L}_{A_t} 
a_{t} = \left( 
c_t ,- \tilde{\pi}_{7}^{2} (F_{A_t}) - \tilde{\pi}_{7}^{2} (a_t \wedge a_t)
\right) 
\label{eq:ceq}
\end{equation} 
with $c_t \in  C_{Z}$. 
From \eqref{eq:coker}, for a given $e \in L^{4} \left(  
\mathfrak{u} (E) \otimes \Lambda^{2}_{7} \right)$, 
there exists a unique $c \in C_Z$ such that $(c, e) \in
\tilde{L}_{A_t} (K^{\perp})$. 
We then inductively define a sequence $\{ a_{t}^{k} \}$ and 
$\{ c_{t}^{k}\} \, (k= 0, 1, 2, 
\dots)$ by 
\begin{equation} 
 \tilde{L}_{A_t} 
 a_{t}^{k+1} = \left( c_{t}^{k} ,   
- \tilde{\pi}_{7}^{2} (F_{A_t}) - \tilde{\pi}_{7}^{2} (a_{t}^{k} 
\wedge a_{t}^{k})  \right), 
\label{eq:series} 
\end{equation} 
with $a_{t}^{0} = 0$, and each $a_{t}^{k} \, ( k= 0,1 ,2 , \dots)$ 
is uniquely determined 
by the condition $a^{k}_t \perp K$,  
and each $c_{t}^{k}$ is uniquely determined by 
the right-hand-side of \eqref{eq:series} lying in 
$\tilde{L}_{A_t} (K^{\perp})$.

\begin{lemma} 
Assume that  $e \in L^{4} \left(  
\mathfrak{u} (E) \otimes \Lambda^{2}_{7} \right)$ and 
$c \in C_{Z}$ satisfy $(c,e) \in \tilde{L}_{A_t} (K^{\perp})$. 
Then the following holds: 
\begin{equation}
 || c ||_{L^{4}_{1}} \leq C_8  ||e ||_{L^4}.
\label{eq:estc} 
\end{equation}
\label{estcoker}
\end{lemma}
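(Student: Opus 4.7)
The plan is to combine the $L^2$-version of the linear estimate (equation \eqref{eq:cz1}) with the special choice of $C_Z$ recorded in \eqref{eq:cz2}, and then upgrade the resulting $L^2$-bound on $c$ to a bound on $||c||_{L^{4}_{1}}$ using the finite-dimensionality of $C_Z$. The heart of the argument is an absorption trick made possible by the precise factor $\frac{1}{2 C_6}$ appearing in \eqref{eq:cz2}.

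First I unwrap the hypothesis: there exists $a \in K^{\perp} \cap L^{4}_{1}(\mathfrak{u}(E) \otimes \Lambda^{1})$ with $\tilde{L}_{A_t} a = (c, e)$, so in particular $d_{A_t}^{*} a = c$. The $L^2$-variant \eqref{eq:cz1} of Proposition \ref{prop:linearprob} gives $||a||_{L^2} \leq C_6 ( ||c||_{L^2} + ||e||_{L^2} )$. Integration by parts on the closed manifold $M$ yields
\begin{equation*}
||c||_{L^2}^{2} = \langle c, d_{A_t}^{*} a \rangle = \langle d_{A_t} c, a \rangle \leq ||d_{A_t} c||_{L^2} \cdot ||a||_{L^2}.
\end{equation*}
Since $\tilde{L}_{A_t}^{*}(c, 0) = d_{A_t} c$ (no contribution from the projection onto $\Lambda^{2}_{7}$ when the second entry vanishes), the defining inequality \eqref{eq:cz2} of $C_Z$ gives $||d_{A_t} c||_{L^2} \leq \frac{1}{2 C_6} ||c||_{L^2}$. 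Chaining the three bounds produces $||c||_{L^2}^{2} \leq \frac{1}{2}||c||_{L^2}^{2} + \frac{1}{2}||c||_{L^2}\cdot||e||_{L^2}$, which rearranges to $||c||_{L^2} \leq ||e||_{L^2}$.

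Finally, I upgrade this to the target bound on $||c||_{L^{4}_{1}}$. Because the elements of $C_Z$ are smooth sections supported on a fixed compact region of $M_{0}^{t}$ on which $g^{t}$ agrees with $g_{Z}$ independently of $t$, the finite-dimensional space $C_Z$ carries $t$-uniformly equivalent Sobolev norms; in particular $||c||_{L^{4}_{1}} \leq C ||c||_{L^2}$. Combined with $||e||_{L^2} \leq C'||e||_{L^4}$, which holds because $M^{t}$ has volume bounded uniformly in $t$, this yields $||c||_{L^{4}_{1}} \leq C_{8} ||e||_{L^4}$ with $C_{8}$ independent of $t$. The main delicate step is the $t$-uniform equivalence of norms on $C_Z$ together with the exact identity $\tilde{L}_{A_t}^{*}(c, 0) = d_{A_t} c$, which are what allow \eqref{eq:cz2} to be applied without any error terms and make the absorption inequality clean.
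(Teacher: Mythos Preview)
Your proof is correct and takes a genuinely different route from the paper's. The paper proceeds by extending $c$ to an element $(c+a',b')\in\bigl(\tilde L_{A_t}(K^{\perp})\bigr)^{\perp}$ with $a'\perp C_Z$, uses the orthogonality $\langle (c,e),(c+a',b')\rangle_{L^2}=0$ to obtain $\|c\|_{L^2}^{2}\le \|b'\|_{L^2}\|e\|_{L^2}$, and then bounds $\|b'\|_{L^8}$ via the $L^4$--adjoint estimate of Proposition~\ref{prop:wtfd} together with $\|\tilde L_{A_t}^{*}(a',b')\|_{L^4}=\|\tilde L_{A_t}^{*}(-c,0)\|_{L^4}$. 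You instead work directly with the given preimage $a\in K^{\perp}$: pair $(c,0)$ against $\tilde L_{A_t}a$ to get $\|c\|_{L^2}^{2}\le \|\tilde L_{A_t}^{*}(c,0)\|_{L^2}\|a\|_{L^2}$, and then feed in the $L^2$ inequalities \eqref{eq:cz1} and \eqref{eq:cz2}. The precise factor $\tfrac{1}{2C_6}$ built into the choice of $C_Z$ is exactly what makes the absorption clean. Your argument is shorter and avoids both the auxiliary construction and Proposition~\ref{prop:wtfd}; the paper's version, on the other hand, does not rely on that specific constant in \eqref{eq:cz2} and uses only Proposition~\ref{prop:wtfd} as the analytic input. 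The final upgrade step (equivalence of norms on the finite-dimensional $C_Z$, and $\|e\|_{L^2}\le C\|e\|_{L^4}$ from uniformly bounded volume) is identical in both proofs.
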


\begin{proof}
Each $c \in C_{Z}$ uniquely extends to 
$( c + a' , b') \in \left( \tilde{L}_{A_t} (K^{\perp}) \right)^{\perp}$,  
where $a' \in C_{Z}^{\perp} \cap L^{4}_1 \left(  
\mathfrak{u} (E) \otimes \Lambda^{0} \right) $ and 
$b' \in L^{4}_{1} \left(  
\mathfrak{u} (E) \otimes \Lambda^{2}_{7} \right)$. 
Since $(c ,e) \in \tilde{L}_{A_t} (K^{\perp})$, we obtain 
$\langle (c,e) , (c +a' , b') \rangle_{L^2} =0$. 
Thus we get  
\begin{equation}
 \begin{split}
 || c ||_{L^2}^{2} 
 &= - \langle b' , e \rangle_{L^2} \\
 &\leq || b' ||_{L^2} || e ||_{L^2} .\\ 
 \end{split}
\label{eq:ip}
\end{equation}
Since $a' \perp C_{Z}$, from Proposition \ref{prop:wtfd} we get  
\begin{equation*}
 \begin{split}
   || b' ||_{L^8} 
&\leq C_7 || \tilde{L}_{A_{t}}^{*} (a' , b') ||_{L^4} \\ 
&= C_7 || \tilde{L}_{A_{t}}^{*} (-c , 0 ) ||_{L^4} \\ 
&\leq C || c ||_{L^{4}_{1}} .  \\
 \end{split}
\end{equation*}
As $C_Z$ is a finite dimensional vector space, 
$|| c ||_{L^{4}_{1}} \leq C || c ||_{L^2}$ for all $c \in C_{Z}$. 
Hence 
\begin{equation}
  || b' ||_{L^8} 
\leq C || c ||_{L^2}. 
\label{eq:ip2}
\end{equation} 
Therefore, from \eqref{eq:ip} and \eqref{eq:ip2}, we obtain 
\begin{equation*}
 || c ||_{L^2}^{2} \leq C || c ||_{L^{2}} || e ||_{L^2} . 
\end{equation*}
Thus, $||c ||_{L^2} \leq C || e ||_{L^2}$. 
Again, using $|| c ||_{L^{4}_{1}} \leq C || c ||_{L^2}$ and 
$|| e ||_{L^2} \leq C || e ||_{L^4}$, we obtain \eqref{eq:estc}. 
\end{proof}

Proposition \ref{prop:linearprob} and Lemma \ref{estcoker} 
together with equation \eqref{eq:series} give us  
\begin{equation}
 \begin{split}
&|| a_{t}^{k+1} - a_{t}^{k} ||_{L^8} 
 + || \nabla ( a_{t}^{k+1} - a_{t}^{k} ) ||_{L^4} \\
& \qquad \qquad \leq 
C_5  || c_{t}^{k} - c_{t}^{k-1} ||_{L^4} 
+ C_5  || \tilde{\pi}_{7}^{2} (a_{t}^{k} \wedge a_{t}^{k} 
- a_{t}^{k-1} \wedge a_{t}^{k-1} ) ||_{L^4} \\ 
& \qquad \qquad 
\leq C_5 (C_{8} + 1 ) 
|| \tilde{\pi}_{7}^{2} (a_{t}^{k} \wedge a_{t}^{k} 
- a_{t}^{k-1} \wedge a_{t}^{k-1} ) ||_{L^4} \\ 
& \qquad \qquad 
\leq C_5 (C_{8} + 1 ) 
 || (| a_{t}^{k} - a_{t}^{k-1} |) ( |a_{t}^{k}| + |a_{t}^{k-1} |) ||_{L^4} \\
& \qquad \qquad  \leq C_5 (C_{8} + 1 ) 
 || a_{t}^{k} - a_{t}^{k-1} ||_{L^8}  \, 
 ( ||a_{t}^{k} ||_{L^8} + || a_{t}^{k-1} ||_{L^8}) . \\   
 \end{split}
\label{eq:ind0}
\end{equation}

We now prove the following: 
\begin{lemma}
There exists a constant  $C_9 >0$ 
independent of $t$ such that the following hold for all $k$ and $t$ 
 sufficiently small. 
\begin{equation}
|| a_{t}^{k} ||_{L^8} \leq  \, C_9 t^{\frac{1}{3}} , 
\label{eq:ind1} 
\end{equation} 
\begin{equation}
|| a_{t}^{k} -a_{t}^{k-1} ||_{L^8} \leq \, C_9 t^{\frac{1}{3}} 2^{-k} . 
\label{eq:ind2}
\end{equation}
\label{lem:ind}
\end{lemma}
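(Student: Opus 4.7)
The plan is to prove both inequalities simultaneously by induction on $k$, choosing the constant $C_9$ to absorb the cost of linearization and error terms, and restricting $t$ to be sufficiently small so that the quadratic term is controlled by the linear one.

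\textbf{Base case.} For $k=0$ both statements are trivial since $a_t^0 = 0$. For $k=1$, the recursion \eqref{eq:series} with $a_t^0 = 0$ gives $\tilde{L}_{A_t} a_t^1 = (c_t^0, -\tilde{\pi}_7^2(F_{A_t}))$ with $a_t^1 \perp K$. Applying Proposition \ref{prop:linearprob} and then Lemma \ref{estcoker} (to control $c_t^0$ by $\tilde{\pi}_7^2(F_{A_t})$), followed by Proposition \ref{prop:error}, yields
\begin{equation*}
  \|a_t^1\|_{L^8} + \|\nabla a_t^1\|_{L^4}
  \;\le\; C_5(C_8 + 1)\,\|\tilde{\pi}_7^2(F_{A_t})\|_{L^4}
  \;\le\; C_5(C_8+1)\,C_1\,t^{1/3}.
\end{equation*}
Choosing $C_9 \ge 2\,C_5(C_8+1)\,C_1$ gives both \eqref{eq:ind1} and \eqref{eq:ind2} for $k=1$.

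\textbf{Inductive step.} Suppose \eqref{eq:ind1} and \eqref{eq:ind2} hold for all indices up to $k$. The key inequality \eqref{eq:ind0} combined with the inductive hypothesis yields
\begin{equation*}
  \|a_t^{k+1} - a_t^k\|_{L^8}
  \;\le\; C_5(C_8+1)\,\|a_t^k - a_t^{k-1}\|_{L^8}\bigl(\|a_t^k\|_{L^8} + \|a_t^{k-1}\|_{L^8}\bigr)
  \;\le\; 2\,C_5(C_8+1)\,C_9^{\,2}\,t^{2/3}\,2^{-k}.
\end{equation*}
To conclude \eqref{eq:ind2} at level $k+1$, it suffices that $2\,C_5(C_8+1)\,C_9^{\,2}\,t^{2/3}\,2^{-k} \le C_9\,t^{1/3}\,2^{-(k+1)}$, which amounts to $4\,C_5(C_8+1)\,C_9\,t^{1/3} \le 1$ and holds once $t$ is small enough. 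Then \eqref{eq:ind1} at level $k+1$ follows immediately by summing the telescoping series:
\begin{equation*}
  \|a_t^{k+1}\|_{L^8}
  \;\le\; \sum_{j=0}^{k} \|a_t^{j+1} - a_t^j\|_{L^8}
  \;\le\; \sum_{j=1}^{k+1} C_9\,t^{1/3}\,2^{-j}
  \;\le\; C_9\,t^{1/3}.
\end{equation*}

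\textbf{Main obstacle.} The delicate point is the balance between the linear coercivity constant $C_5$, the cokernel constant $C_8$, and the smallness of the approximate error $t^{1/3}$ coming from Proposition \ref{prop:error}. Schematically, the iteration is a contraction with contraction factor $\sim C_5(C_8+1)\,C_9\,t^{1/3}$ on $L^8$; because $C_5$ and $C_8$ are independent of $t$, one can fix $C_9$ first from the base case and then shrink $t$ to make this factor less than $1/2$. Care is needed to verify that Proposition \ref{prop:linearprob} and Lemma \ref{estcoker} may indeed be iterated with $t$-independent constants at each stage, which is exactly what the weighted estimates from Section 5.1 guarantee provided the inductive iterates remain $L^2$-orthogonal to $K$, as built into the definition of the sequence.
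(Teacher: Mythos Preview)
Your proof is correct and follows essentially the same approach as the paper: induction on $k$, using the key quadratic estimate \eqref{eq:ind0} for the inductive step, the error bound from Proposition \ref{prop:error} for the base case, and choosing $t$ small so that the contraction factor $\sim C_5(C_8+1)C_9 t^{1/3}$ drops below $1/2$. Your write-up is in fact slightly more careful than the paper's, since you explicitly invoke Lemma \ref{estcoker} to control $c_t^0$ in the base case, whereas the paper absorbs this into a generic constant.
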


\begin{proof}
The proof goes by induction. 
For $k =1$, 
\begin{equation*}
 ||a_{t}^1 ||_{L^8} \leq C_5 
||\tilde{\pi}_{7}^2 (F_{A_t}) ||_{L^4} \leq C t^{\frac{1}{3}} .
\end{equation*}
Suppose that \eqref{eq:ind2} holds for $1 , 2, \dots , k$. Then we
 obtain 
\begin{equation*}
 \begin{split}
  || a_{t}^{k} ||_{L^8} 
 &\leq || a_{t}^{1} ||_{L^8} + ||a_{t}^{1} - a_{t}^{2} ||_{L^8}  +
  \cdots + || a_{t}^{k} - a_{t}^{k-1} ||_{L^8} \\
 &\leq C_9
   t^{\frac{1}{3}} \left( \frac{1}{2} + \frac{1}{4} + \cdots + \frac{1}{2^{k-1}} \right) \\
 &\leq \, C_9 t^{\frac{1}{3}} . \\  
 \end{split}
\end{equation*}
Hence if we assume that  \eqref{eq:ind2} holds for $1 , 2, \dots , k$,
 then \eqref{eq:ind1} holds for $k$.

Now we suppose that \eqref{eq:ind1} and \eqref{eq:ind2} hold for  $1 , 2, \dots , k$. 
Then, by \eqref{eq:ind0}
\begin{equation*}
 \begin{split}
 || a_{t}^{k+1} - a_{t}^{k} ||_{L^8} 
 &\leq C 
  || a_{t}^{k} - a_{t}^{k-1} ||_{L^8}  \, 
   ( ||a_{t}^{k} ||_{L^8} + || a_{t}^{k-1} ||_{L^8})  \\
 &\leq C ( C_9 
 t^{\frac{1}{3}} 2^{-k}) 
( C_9 t^{\frac{1}{3}} + C_9 t^{\frac{1}{3}}) . 
 \end{split}
\end{equation*}
Therefore, if we take $t$ small enough so that $2 C C_9
 t^{\frac{1}{3}} \leq \frac{1}{2}$, then 
\begin{equation*}
 || a_{t}^{k+1} -a_{t}^{k} ||_{L^8} 
\leq  \, C_9 t^{\frac{1}{3}} 2^{-k-1} .
\end{equation*}
\end{proof}

Lemma \ref{lem:ind} and \eqref{eq:ind0} imply $\{ a_{t}^{k} \}$ and $\{ 
\nabla a_{t}^{k} \}$ are Cauchy sequences in $L^8$ and $L^4$ 
respectively, thus 
$\{ a_{t}^{k} \}$ 
and $\{ 
\nabla a_{t}^{k} \}$ converge to 
$a_t$ and $\nabla a_t$ in $L^8$ and $L^4$ respectively for some unique 
$a_t \in L^{4}_{1} (\mathfrak{u} (E) \otimes \Lambda^2)$. 
In addition, 
Lemmas \ref{estcoker}, \ref{lem:ind} and \eqref{eq:ind0} imply 
that there exists a constant $C_{10} >0$ such that the
following holds for all $k$ and $t$ sufficiently small: 
\begin{equation*}
 || c_{t}^{k} ||_{L^{4}_{1}} \leq C_{10} t^{\frac{1}{3}}, 
\quad || c_{t}^{k} - c_{t}^{k-1} ||_{L^{4}_{1}} \leq 
C_{10} t^{\frac{1}{3}} 2^{-k}. 
\end{equation*} 
Thus, $\{ c_{t}^{k} \}$ converges in $L^{4}_{1} ( \mathfrak{u}(E)
\otimes \Lambda^{0}) $, and hence in $C_{Z}$. 

Therefore, we obtain 
\begin{proposition} 
For $t$ sufficiently small 
there exists $a_t \in L^{4}_1 ( \mathfrak{u} (E) \otimes \Lambda^1 )$
 with $|| a_t ||_{L^8} \leq C_{9} t^{\frac{1}{3}}$ and 
$ c_{t} \in C_Z$ with 
$|| c_t ||_{L^{4}_{1}} \leq C_{10} t^{\frac{1}{3}}$  
such that $A_{t} + a_{t}$ satisfies 
the $Spin(7)$-instanton 
equation and $d_{A_t}^{*} a_t  = c_t$.    
\end{proposition}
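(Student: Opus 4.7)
The plan is essentially to extract the conclusion from the machinery already assembled just above the statement. First, I would observe that Lemma \ref{lem:ind} together with inequality \eqref{eq:ind0} shows that $\{a_t^k\}$ is Cauchy in $L^8$ and $\{\nabla a_t^k\}$ is Cauchy in $L^4$. By completeness there is a unique limit $a_t \in L^4_1(\mathfrak{u}(E)\otimes \Lambda^1)$, and the bound $\|a_t\|_{L^8}\le C_9 t^{1/3}$ follows from passing to the limit in \eqref{eq:ind1}. The orthogonality $a_t\perp K$ is preserved in the limit since $K$ is a fixed finite-dimensional subspace.

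Second, I would control $\{c_t^k\}$ in $C_Z$. Applying Lemma \ref{estcoker} to the difference of two consecutive iterations in \eqref{eq:series} gives
\begin{equation*}
\|c_t^k - c_t^{k-1}\|_{L^4_1} \le C_8 \,\|\tilde{\pi}_7^2(a_t^k\wedge a_t^k - a_t^{k-1}\wedge a_t^{k-1})\|_{L^4},
\end{equation*}
and the right-hand side is dominated by $\|a_t^k-a_t^{k-1}\|_{L^8}(\|a_t^k\|_{L^8}+\|a_t^{k-1}\|_{L^8})$, which by Lemma \ref{lem:ind} is of order $t^{2/3}\,2^{-k}$. Hence $\{c_t^k\}$ is Cauchy in $L^4_1$; since $C_Z$ is finite-dimensional (and so closed), the limit $c_t$ lies in $C_Z$ and satisfies $\|c_t\|_{L^4_1}\le C_{10} t^{1/3}$.

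Third, I would pass to the limit in the iteration equation \eqref{eq:series}. The linear term $\tilde{L}_{A_t} a_t^{k+1}$ converges to $\tilde{L}_{A_t} a_t$ in $L^4$ because $a_t^k\to a_t$ in $L^4_1$. For the quadratic term I use the bilinear bound
\begin{equation*}
\|\tilde{\pi}_7^2(a_t^k\wedge a_t^k) - \tilde{\pi}_7^2(a_t\wedge a_t)\|_{L^4} \le C\|a_t^k-a_t\|_{L^8}\bigl(\|a_t^k\|_{L^8}+\|a_t\|_{L^8}\bigr),
\end{equation*}
which tends to zero. Therefore
\begin{equation*}
\tilde{L}_{A_t} a_t = \bigl(c_t,\,-\tilde{\pi}_7^2(F_{A_t})-\tilde{\pi}_7^2(a_t\wedge a_t)\bigr).
\end{equation*}
Unpacking the two components: the $\Omega^2_7$-component reads $\tilde{\pi}_7^2(F_{A_t}+d_{A_t}a_t+a_t\wedge a_t)=\tilde{\pi}_7^2(F_{A_t+a_t})=0$, which is exactly the $Spin(7)$-instanton equation for $A_t+a_t$, while the $\Omega^0$-component is the gauge-fixing identity $d_{A_t}^* a_t=c_t$.

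There is no genuine obstacle at this final step, as all the hard analysis (the approximate-solution estimate of Proposition~\ref{prop:error}, the Fredholm-based inverse bound of Proposition~\ref{prop:linearprob}, the cokernel control in Lemma~\ref{estcoker}, and the quadratic contraction in Lemma~\ref{lem:ind}) has already been carried out. The only subtlety worth flagging is that the contraction argument requires $t$ small enough to absorb the constant $2CC_9 t^{1/3}\le 1/2$ from the nonlinear iteration; this is precisely the smallness condition under which the whole scheme closes, and it is the quantitative source of the bounds $C_9 t^{1/3}$ and $C_{10} t^{1/3}$ in the statement.
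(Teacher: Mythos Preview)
Your proposal is correct and follows essentially the same approach as the paper: the paper likewise deduces the proposition directly from Lemma~\ref{lem:ind} and \eqref{eq:ind0} (giving the Cauchy property and the $L^8$ bound for $a_t$) together with Lemma~\ref{estcoker} (giving the Cauchy property and $L^4_1$ bound for $c_t^k$), and then states the proposition as the resulting consequence. Your write-up is in fact slightly more explicit than the paper's, since you spell out the passage to the limit in \eqref{eq:series} and identify the two components as the $Spin(7)$-instanton equation and the gauge-fixing relation $d_{A_t}^* a_t = c_t$.
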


\subsection{Regularity}

We use the elliptic theory for $L^p$ spaces \cite{GT}. 
If $D$ is an elliptic operator of order $\ell$ , then for each $k \geq
0$ 
\begin{equation*}
 ||s ||_{L^{p}_{k + \ell}} \leq 
C \left( || Ds ||_{L_{k}^{p}} 
+ ||s ||_{L^{p}} \right). 
\end{equation*}

\begin{lemma}
If $A+a 
\in L^4_1 ( \mathfrak{u} (E) \otimes \Lambda^1 )$ 
is a $Spin(7)$-instanton with  $|| a ||_{L^{8}}$ sufficiently small and
 $|| d_{A}^{*} a ||_{L^{4}_{1}}$ bounded, 
then $a \in L^4_2 ( \mathfrak{u} (E) \otimes \Lambda^1 )$. 
\end{lemma}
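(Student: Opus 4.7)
The strategy is to apply elliptic regularity for the first-order elliptic operator $L_A = (d_A^*, d_A^7)$ and then use the smallness of $\|a\|_{L^8}$ to absorb the quadratic nonlinearity in the resulting estimate, together with a standard approximation argument to upgrade the a priori bound to a genuine regularity statement.

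First I would rewrite the $Spin(7)$-instanton equation in a form adapted to $L_A$. Since $F_{A+a} = F_A + d_A a + a\wedge a$ and $\tilde\pi_7^2 F_{A+a} = 0$, combining with $c := d_A^{*} a \in L^4_1$ yields
\[
L_A a = \bigl(c,\; -\tilde\pi_7^2 F_A - \tilde\pi_7^2(a\wedge a)\bigr).
\]
Because $L_A$ is first-order elliptic with smooth coefficients (the background connection $A$ is smooth), the standard $L^p$-elliptic estimate gives
\[
\|a\|_{L^4_2} \leq C\bigl(\|L_A a\|_{L^4_1} + \|a\|_{L^4}\bigr).
\]

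The crux is to control $\|\tilde\pi_7^2(a\wedge a)\|_{L^4_1}$. The pointwise bounds $|a\wedge a|\lesssim |a|^2$ and $|\nabla(a\wedge a)|\lesssim |a||\nabla a|$, combined with H\"older's inequality, give
\[
\|a\wedge a\|_{L^4} \leq C\|a\|_{L^8}^2, \qquad \|\nabla(a\wedge a)\|_{L^4} \leq C\|a\|_{L^8}\|\nabla a\|_{L^8}.
\]
The critical Sobolev embedding $W^{1,4}(M^8)\hookrightarrow L^8$ (the borderline case $sp = n$ in dimension eight) converts the second factor into an $L^4_2$ quantity, namely $\|\nabla a\|_{L^8} \leq C\|a\|_{L^4_2}$. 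Substituting back and combining with the bounded contributions of $c$ and $F_A$ yields
\[
\|a\|_{L^4_2} \leq C_0 + C_1 \|a\|_{L^8}\,\|a\|_{L^4_2},
\]
where $C_0$ depends only on the data. The smallness hypothesis, taken strong enough that $C_1\|a\|_{L^8}\leq \tfrac{1}{2}$, then allows the last term to be absorbed into the left-hand side.

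To upgrade this a priori estimate to the genuine conclusion $a \in L^4_2$, I would apply the same reasoning to tangential difference quotients $D_h a$: these satisfy the same equation modulo a commutator term that is controlled uniformly in $h$ by $\|a\|_{L^4_1}$, so the absorption step bounds $\|D_h a\|_{L^4_1}$ independently of $h$, and letting $h\to 0$ gives $\nabla a \in L^4_1$, hence $a\in L^4_2$. The heart of the argument, and the place where the smallness of $\|a\|_{L^8}$ is genuinely essential, is the critical Sobolev threshold: because $W^{1,4}(M^8)\hookrightarrow L^8$ is borderline in dimension eight, there is no estimate of $\|a\wedge a\|_{L^4_1}$ purely in terms of $\|a\|_{L^4_1}$, and one must feed in the critical norm $\|a\|_{L^8}$ to close the loop.
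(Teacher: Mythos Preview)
Your argument is correct and follows essentially the same route as the paper: apply the $L^4_1$ elliptic estimate for $L_A$, bound $\|a\wedge a\|_{L^4_1}\leq C\|a\|_{L^8}\|a\|_{L^8_1}$ via H\"older, invoke the critical embedding $L^4_2\hookrightarrow L^8_1$, and absorb using the smallness of $\|a\|_{L^8}$. The paper's proof is terser---it omits the $\tilde\pi_7^2 F_A$ term (smooth, hence harmless) and simply records the a priori inequality, referring to \cite{DK} for the standard machinery; your explicit inclusion of a difference-quotient step to convert the a priori bound into genuine $L^4_2$ membership is a welcome addition that the paper leaves implicit.
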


\begin{proof}
This follows from the standard argument (see  for example \cite{DK} 
 pp. 61--62). 
From the elliptic regularity, 
\begin{equation*}
 \begin{split}
  ||a ||_{L^4_2} 
&\leq C \left( || L_{A} a ||_{L^4_1} + ||a ||_{L^4} \right) \\ 
&\leq C 
\left( || \left( 
d_{A}^{*} a , \pi^2_7 (a \wedge a) \right)||_{L^4_1} + ||a ||_{L^4} \right) \\ 
&\leq C 
\left( || d_{A}^{*} a ||_{L^{4}_{1}} + || a ||_{L^8} \, || a ||_{L^8_1} + ||a ||_{L^4} \right) \\ 
&\leq C 
\left(  || d_{A}^{*} a ||_{L^{4}_{1}} + 
|| a ||_{L^8} \, || a ||_{L^4_2} + ||a ||_{L^4} \right) . \\ 
 \end{split}
\end{equation*}
Hence, 
\begin{equation*}
 \left( 1 - C  || a ||_{L^8} \right) 
|| a ||_{L_{2}^{4}} 
\leq C \left( || d_{A}^{*} a ||_{L^{4}_{1}} +  || a ||_{L^4} \right) . 
\end{equation*}
Therefore, if $||a ||_{L^8}$ is small enough, and $|| d_{A}^{*} a ||_{L^{4}_{1}}$ 
is bounded, then $|| a ||_{L^4_2}$ is 
bounded. 
\end{proof}

A similar argument yields $a \in L_{3}^{4}$. Using the Sobolev embedding
theorem, 
we 
obtain $a \in L_{2}^{8}$. Then one can use the argument in Section 8 of 
\cite{Lewis} to obtain the smoothness of $a$.

\begin{remark}
From the Sobolev embedding theorem, if $a \in L^4_2$, then $ a \in  L^{8}_{1}$. 
Since $Spin(7)$-instantons are Yang--Mills connections, thus we can use 
results on Yang--Mills connections such as in \cite{Uh1}, 
\cite{Uh2}, and \cite{MR2030823}. 
For example, use Theorem 9.4 in \cite{MR2030823} to find a gauge 
transformation $g$ such that $g^* (a)$ is smooth.  
\end{remark}

\section{Example}

We consider an example from \cite{MR1787733} (Example 15.7.3).  
Let $Y$ be 
a complete intersection in the weighted 
projective space $\C \mathbb{P}^{6}_{3,3,3,3, 4,4,4}$ defined by 
\begin{gather*} 
 z_{0}^{4} +  z_{1}^{4} + z_{2}^{4} + z_{3}^{4} + P (z_4 , z_5 ,z_6) = 0
 , \\
 i z_{0}^{4} - i z_{1}^{4} + 2 i z_{2}^{4} - 2 i  z_{3}^{4} + Q (z_4 , 
 z_5 ,z_6) = 0, 
\end{gather*} 
where $P (z_4 , z_5 ,z_6) , Q (z_4 , z_5 ,z_6)$ are generic homogeneous
cubic polynomials with real coefficients. 
This is a Calabi--Yau four-orbifold, 
and the singular set consists of the 9 points defined by 
\begin{equation*}
 \{ [ 0 , 0  , 0 , z_4 , z_5 , z_6  ] 
\in \C \mathbb{P}^{6}_{3, 3, 3, 3, 4, 4, 4} \, : \, 
  P (z_4 , z_4 ,z_6) = Q (z_4 , z_5 ,z_6 ) =0 \} ,
\end{equation*}
and the curve $\Sigma$ 
defined by 
\begin{equation*}
\begin{split}
 \Sigma &= \{ [z_0 , z_1 , z_2 , z_3 , 0 , 0, 0 ] \in 
\C \mathbb{P}^{6}_{3,3,3,3,4,4,4} \, 
: \,  z_{0}^{4} + z_{1}^{4} +z_{2}^{4} + z_{3}^{4} = 0,   \\
  & \qquad \qquad \qquad \qquad  
   \qquad \qquad \qquad \qquad   
  i z_{0}^{4} -i  z_{1}^{4} + 2i z_{2}^{4} - 2i  z_{3}^{4} = 0 \} . \\
\end{split}
\end{equation*}
We consider an anti-holomorphic involution $\sigma : Y \to Y$ defined by 
\begin{equation*}
 \sigma : 
[ z_0 , z_1 ,  \cdots , z_{6}] \mapsto 
[\bar{z}_1 , - \bar{z}_{0} , \bar{z}_{3} , - \bar{z}_{2} , \bar{z}_{4} , 
 \bar{z}_{5} , \bar{z}_6 ] .
\end{equation*}   
This fixes some points of the singular sets, which depend on the choice of $P$ and
$Q$. 
We take these $P$ and $Q$ so that there are five fixed points, 
say $p_1, p_2 , p_3, p_4 , p_5$,  of
$\sigma$ in the singular set, 
and two pairs of the singular points, say 
$p_6$ and  $p_7$ , $p_8$ and $p_9$, 
which are swapped with each other.

We define $Y'$ to be the blow-up of $Y$ along $\Sigma$, and lift
$\sigma$ 
to $Y'$ to give an anti-holomorphic involution $\sigma : Y' \to Y'$ 
with fixed points $p_1 , \dots , p_5$.  
The singular points of $Y' / \sigma $ are $p_1 , \dots , p_5 , 
p_6= p_7 , p_8 =p_9$. We put $X_{n_j} \, ( j = 1 \text{ or } 2)$ at 
$p_1, \dots , p_5$, and $X$ at $p_6= p_7 , p_8 =p_9$, where $X$ is 
the blow-up of $\C^{4} / \Z_4$ at the origin, 
and then apply the construction described in Section 2 
(some modifications about gluing $X$ at $p_6= p_7 , p_8 =p_9$ 
are needed, but they are trivial) 
to get a compact 
$Spin(7)$-manifold $M$.

\paragraph{Ingredient bundles. }

We consider a line bundle $L_{D}$ over $X$, which is 
determined by the exceptional
divisor $D=\C \mathbb{P}^3$. 
We equip $L_{D}$ with a Hermitian metric. 
Note that $L_{D}$ has trivial holonomy at infinity.

We put $E_{X, k} = L_{D}^{k} \oplus L_{D}^{-k} \, (k \in \Z)$  
at $p_6$ and $p_7$, 
$E_{X, \ell} = L_{D}^{\ell} \oplus L_{D}^{-\ell} \, (\ell \in \Z)$ 
at $p_8$ and $p_9$, and the rank two trivial bundle $\underline{\C}^{2}$
at each $p_1 \, \dots , p_5$,  
then glue them together 
to $\underline{\C}^{2}$ over $Y'$ 
to get a vector bundle $E$ over $M$. 
Since 
\begin{equation*}
\begin{split}
\text{Aut} &(L_{D}^{m} \oplus L_{D}^{-m})  \\
 &= H^{0} ( X , \mathcal{O}_X) 
 \oplus H^{0} (X , L_{D}^{2m}) 
\oplus H^{0} (X , L_{D}^{-2m}) 
\oplus H^{0} ( X , \mathcal{O}_X),  \\
\end{split}   
\end{equation*}
the holomorphic automorphism group of $E_{X, m}$ consists of upper triangular
matrices in 
$SU(2)$.  
Therefore, the automorphism group of $E$ is the intersection of 
contributions from the stabilizer groups of $E_{X, k}$ and
$E_{X, \ell}$ 
at each $p_{i} \, (i
= 6, 7, 8 ,9)$, schematically it is   
\begin{equation*}
\begin{split}
\left(  A_{6}^{-1} 
\left( 
\begin{matrix}
* & * \\
0 & * \\
\end{matrix}
\right) A_6
\right) 
&\cap 
\left(  A_{7}^{-1} 
\left( 
\begin{matrix}
* & * \\
0 & * \\
\end{matrix}
\right) A_7
\right) \\
& \quad \cap 
\left(  A_{8}^{-1} 
\left( 
\begin{matrix}
* & * \\
0 & * \\
\end{matrix}
\right) A_8 
\right)
\cap 
\left(  A_{9}^{-1} 
\left( 
\begin{matrix}
* & * \\
0 & * \\
\end{matrix}
\right) A_9
\right), \\
\end{split}
\end{equation*}
where $A_6 , A_7 , A_8 ,A_9 \in SU(2)$, and 
$A_6$ and $A_{7}$, $A_8$ and $A_{9}$ are conjugate.  
This becomes $c \cdot \text{id} \, (c \in \C^{*})$ 
for generic $A_6$ and $A_8$. 
Thus, we can make the
resulting vector bundle $E$ irreducible.

\paragraph{Hermitian--Einstein connections. }

We equip $L_{D}^{m}$ with a Hermitian--Einstein connection. 
Since $c_1 (L_{D}^{m})$ lies in the image of $H^{2}_{cs} (X) \to H^2
(X)$, 
thus, from Lockhart (\cite{MR879560},  Section 8), there exists a unique 
harmonic 2-form $\alpha$ in $X$ such that $[\alpha] = 2 \pi c_1
(L_{D}^{k})$ 
and $\alpha = O( r^{-7} )$. 
We then decompose $\alpha$ into 
\begin{equation*}
 \alpha = \alpha^{2,0} + \alpha^{0,2} + \alpha_{0}^{1,1} + (\alpha \cdot
  \omega) \omega . 
\end{equation*} 
Since $H^{0,2} (X) =0$, $\alpha^{0,2} = \alpha^{2,0} =0 $. 
Moreover, since $(\alpha \cdot \omega)$ is harmonic, vanishing at
infinity, 
thus, $(\alpha \cdot  \omega) =0 $ by the maximum principle.  
Hence $\alpha = \alpha_{0}^{1,1}$. 
We now take a connection $A$ of $L_{D}^{m}$ with $F_{A} = \alpha$, then 
this $A$ is a Hermitian--Einstein connection on $L_{D}^{m}$.

\paragraph{The conditions for the linearized operators. }

We examine the conditions for the linearized operators in Section 4.1. 
For the $Spin(7)$-orbifold side, we have $H^{2}_{7} (Z) =0$, since 
$Z$ has holonomy 
$SU(4) \rtimes \Z_{2}$. 
Thus, the cohomology $H^{2} (Z , \mathfrak{su} (E))$ of the complex 
\eqref{eq:complex} vanishes, as 
 $H^{2} (Z , \mathfrak{su} (\underline{\C}^2)) = 
H^{2}_{7} (Z) \otimes \mathfrak{su} (\underline{\C}^2)$. 
Hence, $\ker L_{A_Z}^{*}$ lies in $\Omega^{0} ( Z , \mathfrak{su} (
\underline{\C}^2))$. 

For the ALE side, 
we introduce a sheaf cohomology on $X$ with the decay rate $\delta$ at 
infinity as follows. 
Let $(L_{D}^{m})_{\delta}$ be 
a sheaf of holomorphic sections of $L_{D}^{m}$ 
with the decay rate $\delta$. 
We consider the following injective resolution 
of $ (L_{D}^{m})_{\delta}$: 
\begin{equation}
\begin{split}
0 &\longrightarrow 
 (L_{D}^{m})_{\delta} \xrightarrow{ \, \, \, \, \, i   \, \, \, \, \,} 
\Omega^{0} (L_{D}^{m})_{\delta} 
\xrightarrow{ \, \, \, \, \, \bar{\partial} \, \, \, \, \,} 
\Omega^{0, 1} (L_{D}^{m})_{\delta - 1} 
\xrightarrow{ \, \, \, \, \, \bar{\partial}  \, \, \, \, \,} 
\Omega^{0,2} (L_{D}^{m})_{\delta - 2} \\ 
& \qquad \qquad \qquad 
 \xrightarrow{ \, \, \, \, \, \bar{\partial}  \, \, \, \, \,} 
\Omega^{0,3 } (L_{D}^{m})_{\delta -3}
\xrightarrow{ \, \, \, \, \, \bar{\partial}  \, \, \, \, \,} 
\Omega^{0, 4} (L_{D}^{m})_{\delta -4}   
\longrightarrow 0 .  
\end{split}
\label{eq:injres}
\end{equation}
We then have a complex induced by \eqref{eq:injres}: 
\begin{equation}
\begin{split}
0 &\longrightarrow 
C^{\infty} ( X , \Omega^{0} (L_{D}^{m})_{\delta}) 
\xrightarrow{ \, \, \, \, \, \bar{\partial}_{X} \, \, \, \, \,} 
C^{\infty} ( X , \Omega^{0, 1} (L_{D}^{m})_{\delta - 1}) 
\xrightarrow{ \, \, \, \, \, \bar{\partial}_{X}  \, \, \, \, \,} \\
&  \qquad  \cdots  
 \xrightarrow{ \, \, \, \, \, \bar{\partial}_{X}  \, \, \, \, \,} 
C^{\infty} ( X  , \Omega^{0,3 } (L_{D}^{m})_{\delta -3}) 
  \xrightarrow{ \, \, \, \, \, \bar{\partial}_{X}  \, \, \, \, \,} 
C^{\infty} ( X  , \Omega^{0, 4} (L_{D}^{m})_{\delta -4} ) 
\longrightarrow 0 .  
\end{split}
\label{eq:compde}
\end{equation} 
We denote by $H_{\delta}^{i} (X , L_{D}^{m})$ the 
$i$-th cohomology of the complex \eqref{eq:compde} for each 
$i= 0, \dots , 4 $.

\begin{lemma}
\begin{equation*}
 H^{0}_{\delta} (X , L_{D}^{m}) = 0 
\end{equation*}
for all $m \in \Z$ and $\delta <0$. 
\end{lemma}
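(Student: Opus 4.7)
The plan is to show that every element $s \in H^{0}_{\delta}(X, L_{D}^{m})$ vanishes identically, by combining a Kodaira--Bochner identity with the strong maximum principle. By the definition of the cohomology of the complex in \eqref{eq:compde}, such an $s$ is a smooth holomorphic section of $L_{D}^{m}$ on $X$ with $|s|_{h} = O(\rho^{\delta})$ at infinity, and since $\delta < 0$ the continuous non-negative function $|s|_{h}^{2}$ tends to zero at infinity.

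First I would establish that $|s|_{h}^{2}$ is subharmonic on $X$, using the fact that the connection $A$ on $L_{D}^{m}$ constructed just before the lemma is Hermitian--Einstein with primitive harmonic curvature $\alpha$, so in particular $\Lambda F_{A} = 0$. Writing $s = f\cdot e$ in a local holomorphic trivialisation with $|e|_{h}^{2} = h$ and using $\bar\partial s = 0$, the standard Bochner computation for a holomorphic section of a Hermitian holomorphic line bundle yields
\begin{equation*}
\partial\bar\partial\, |s|_{h}^{2} \;=\; \langle \partial_{A} s,\, \partial_{A} s \rangle_{h} \;-\; |s|_{h}^{2}\, F_{A},
\end{equation*}
where $\partial_{A}$ denotes the $(1,0)$-part of the Chern connection. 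Taking the trace with the K\"ahler form $\omega$ and applying $\Lambda F_{A} = 0$ gives $\Lambda\, \partial\bar\partial |s|_{h}^{2} = \Lambda \langle \partial_{A} s, \partial_{A} s \rangle_{h} \geq 0$, so $|s|_{h}^{2}$ is weakly subharmonic on the K\"ahler manifold $X$.

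Next I would invoke the strong maximum principle. Since $X$ is connected and complete, and $|s|_{h}^{2}$ is continuous, non-negative, subharmonic, and tends to zero at infinity, its global supremum is either $0$ (in which case $s \equiv 0$) or attained at some interior point of $X$; the latter case is ruled out because the strong maximum principle for subharmonic functions then forces $|s|_{h}^{2}$ to be constant on $X$, contradicting the decay at infinity. Therefore $s \equiv 0$, proving $H^{0}_{\delta}(X, L_{D}^{m}) = 0$ for every $m \in \Z$ and every $\delta < 0$.

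The main obstacle is essentially bookkeeping: carefully verifying the Bochner identity above and the sign of $\Lambda F_{A}$ to ensure that $|s|_{h}^{2}$ is genuinely subharmonic rather than superharmonic. The conclusion is uniform in $m$ since the induced Hermitian--Einstein connection on $L_{D}^{m}$ has curvature $m\alpha$, which is still primitive harmonic, so the hypothesis $\Lambda F_{A} = 0$ holds for every $m \in \Z$.
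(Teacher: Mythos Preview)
Your argument is correct, but it is genuinely different from the paper's. The paper does not use the Hermitian--Einstein condition or any Bochner identity. Instead it trivialises $L_{D}^{m}$ holomorphically by the canonical section $s_{D}$: writing $t = f\, s_{D}^{m}$ for a meromorphic function $f$, it observes that $f$ is holomorphic on $X\setminus D \cong (\C^{4}/\Z_{4})\setminus\{0\}$, invokes Hartogs' theorem on $\C^{4}\setminus\{0\}$ to extend $f$ across the origin (hence across $D$), and then applies the maximum principle to the holomorphic function $f$, which decays at infinity because $\delta<0$.

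So the two proofs share only the final maximum-principle step. The paper's route is more elementary and purely complex-analytic: it exploits the concrete description of $X$ as a blow-up of $\C^{4}/\Z_{4}$ and needs no metric input whatsoever. Your route is differential-geometric and more portable: it would prove the same vanishing for any holomorphic line bundle admitting a metric with $\Lambda F_{A}=0$ on any complete K\"ahler manifold, without knowing anything about the underlying geometry. The trade-off is that your argument implicitly assumes the Hermitian metric defining the weighted norms agrees (up to a bounded factor at infinity) with the Hermitian--Einstein metric constructed just before the lemma, so that the decay $|s|_{h}=O(\rho^{\delta})$ really holds for the metric in your Bochner computation; this is true here since both metrics are asymptotically flat, but it is worth stating.
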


\begin{proof}
We take the standard holomorphic section $s_D$ of $L_{D}$, 
and write $t \in H^{0}_{\delta} (L^{m})$ as $t= f s_D$, where $f$ is a 
meromorphic function on $X$. 
Since $t$ is holomorphic, $f$ has a pole of the order $\leq m$ if 
$m \geq0 $, or $f$ has a zero of the order $\geq - m$ if $m \leq 0$ at 
$D$. 
By Hartogs' theorem, a holomorphic function on $\C^4 \setminus 0$ 
extends to $\C^4$, hence $f$ has no pole at $D$, that is, $f$ is 
holomorphic.  
Since we impose growth condition at infinity, $|f| \to 0$ as $r \to \infty$. 
Hence $f \equiv 0$ by the maximum principle. 
\end{proof}

\begin{lemma}
\begin{equation*}
H^{2}_{\delta} (X , L_{D}^{m}) = 0 
\end{equation*}
for all $m \in \Z$ and $\delta < 0$.
\end{lemma}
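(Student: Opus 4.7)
The plan is to reduce the computation to the case $m = 0$ via an ideal-sheaf sequence, and then to combine an unweighted Dolbeault computation on the line-bundle total space $X$ with a weighted Hodge-theoretic argument on the Calabi--Yau ALE manifold $X$.

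First, I would use the short exact sequence
\begin{equation*}
0 \to L_D^{m-1} \to L_D^{m} \to \mathcal{O}_D(-4m) \to 0
\end{equation*}
on $X$, obtained by tensoring the ideal-sheaf sequence $0 \to \mathcal{O}_X(-D) \to \mathcal{O}_X \to \mathcal{O}_D \to 0$ with $L_D^m$ and using $N_{D/X} = \mathcal{O}_{\C\mathbb{P}^3}(-4)$ to identify $L_D^m|_D = \mathcal{O}_{\C\mathbb{P}^3}(-4m)$. Passing to the associated long exact sequence in weighted Dolbeault cohomology brings in the groups $H^i_\delta(X, \mathcal{O}_D(-4m)) = H^i(\C\mathbb{P}^3, \mathcal{O}(-4m))$, where the weight condition is automatic since the sheaf is supported on the compact divisor $D$. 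By Bott's formula these vanish for $i = 1, 2$, so one obtains
\begin{equation*}
H^2_\delta(X, L_D^{m-1}) \xrightarrow{\sim} H^2_\delta(X, L_D^m) \qquad (m \in \Z).
\end{equation*}
Hence $H^2_\delta(X, L_D^m)$ is independent of $m$, and it suffices to prove $H^2_\delta(X, \mathcal{O}_X) = 0$.

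For the unweighted version, I would write $X = \mathrm{Tot}(\mathcal{O}_{\C\mathbb{P}^3}(-4))$, so that the projection $\pi : X \to \C\mathbb{P}^3$ satisfies $R^q \pi_* \mathcal{O}_X = 0$ for $q > 0$ and $\pi_* \mathcal{O}_X = \bigoplus_{k \geq 0} \mathcal{O}(4k)$. The Leray spectral sequence combined with Bott's formula then yields $H^{0,2}(X, \mathcal{O}_X) = \bigoplus_k H^{0,2}(\C\mathbb{P}^3, \mathcal{O}(4k)) = 0$; equivalently, $X$ deformation-retracts onto $\C\mathbb{P}^3$, so $H^2(X; \C)$ is purely of Hodge type $(1,1)$.

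The main obstacle is promoting this unweighted vanishing to the weighted statement. The plan is to appeal to the Lockhart--McOwen theory for the $\bar{\partial}$-Laplacian on the Calabi--Yau ALE manifold $X$: for $\delta$ outside a discrete exceptional set, the weighted $\bar{\partial}$-complex is Fredholm, and the dimensions of its cohomologies are locally constant in $\delta$. For $\delta$ sufficiently negative, weighted harmonic $(0,2)$-forms lie in $L^2$, and $L^2$ Hodge theory on the ALE Calabi--Yau $X$ identifies them with the image of $H^{0,2}_c(X) \to H^{0,2}(X) = 0$. A short spectral computation on the link $S^7 / \Gamma_8$, in the spirit of the calculation of $\mathcal{D}$ in Section 5.1, then shows that no exceptional weight intervenes in the interval $(-\infty, 0)$, so the vanishing propagates across the full range $\delta < 0$.
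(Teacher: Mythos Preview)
Your reduction to the case $m=0$ via the ideal-sheaf sequence and the vanishing of $H^1$ and $H^2$ on $\C\mathbb{P}^3$ with coefficients in $\mathcal{O}(-4m)$ is exactly the paper's argument, so that part is fine.

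The difference lies in the base case $H^2_\delta(X,\mathcal{O}_X)=0$ for $\delta<0$. The paper does not attempt to prove this directly: it simply invokes Theorem~5.3 of Joyce's paper \cite{MR1824171} on ALE $SU(m)$ metrics, which establishes precisely this vanishing. Your plan instead tries to rederive it from scratch via unweighted Dolbeault vanishing plus a Lockhart--McOwen wall-crossing argument. The unweighted part (Leray plus Bott) is correct, but the weighted step has a genuine gap: you assert without justification that no exceptional weight for the $\bar\partial$-complex on $(0,2)$-forms lies in $(-\infty,0)$. The set $\mathcal{D}$ computed in Section~5.1 is for the twisted Dirac operator $L_A$, not for the $\bar\partial$-Laplacian acting on $(0,q)$-forms, and the relevant indicial roots are governed by a different eigenvalue problem on the link. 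Verifying that claim is essentially the content of the cited theorem of Joyce, so your sketch amounts to reproving that result rather than bypassing it. Unless you actually carry out the spectral computation on $S^7/\Gamma_8$ for the Dolbeault complex (and check the $L^2$-Hodge identification you invoke, which for Dolbeault rather than de~Rham cohomology on an ALE Calabi--Yau also needs an argument), the base case remains unproven in your write-up. The cleanest fix is to cite \cite{MR1824171} as the paper does.
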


\begin{proof}
We consider the following exact sequence,  
\begin{equation*}
 0 \longrightarrow (L_{D}^{-1})_{\delta} \longrightarrow 
 (\mathcal{O}_{X})_{\delta} \longrightarrow 
\mathcal{O}_{D} \longrightarrow 0. 
\end{equation*}
Twisting by $L_{D}^{m} \, (m \in \Z_{\geq 0})$, we obtain 
\begin{equation}
 0 \longrightarrow ( L_{D}^{m-1})_{\delta} \longrightarrow
 (L_{D}^{m})_{\delta} 
\longrightarrow \mathcal{O}_{D} \otimes L_{D}^{m} \longrightarrow 0. 
\label{eq:shortex}
\end{equation}
From this, we get a long exact sequence: 
\begin{equation*}
\begin{split}
\cdots 
&\longrightarrow H^{1} (D , \mathcal{O}_{D} (-4m) ) 
\longrightarrow H^2_{\delta} (X, L_{D}^{m-1}) \\
&\quad \quad 
\longrightarrow H^2_{\delta} (X, L_{D}^{m}) 
\longrightarrow H^2 (D, \mathcal{O}_{D} (-4m ))  
\longrightarrow \cdots , \\
\end{split}
\end{equation*}
where we used $H^{i} (X , \mathcal{O}_{D} \otimes L_{D}^{m}) \cong 
H^{i} ( D , \mathcal{O}_{D} (-4m)) $. 
As  $H^{1} (D , \mathcal{O}_{D} ( -4m )) = 
H^{2} (D , \mathcal{O}_{D} ( - 4m )) =0$, 
we get an isomorphism $ H^2_{\delta} (X, L_{D}^{m-1}) \to  
H^2_{\delta} (X, L_{D}^{m})$. 
In addition, we have $H^2_{\delta} (X,
\mathcal{O}_{X}) =0 $ for $\delta <0$ by Theorem 5.3 in \cite{MR1824171}. 
Hence $H^2_{\delta} (X, L_{D}^{m})=0$ for $m \in
\Z_{\geq 0}$ by induction. 
The dual argument yields $H^2_{\delta} (X, L_{D}^{m})=0$ for 
$m \in \Z_{\leq 0}$ as well. 
\end{proof}

Hence, 
\begin{equation*}
\begin{split}
 H^{2}_{\delta} &(X , \text{End} (E_{X, m}))  \\
 &= H^{2}_{\delta} ( X, \mathcal{O}_X)  \oplus 
   H^{2}_{\delta} ( X, L_{D}^{2m}) 
\oplus H^{2}_{\delta} (X , L_{D}^{-2m}) 
\oplus H^{2}_{\delta} ( X, \mathcal{O}_X)=0  \\
\end{split} 
\end{equation*}
for $\delta < 0$. 
Thus, the linearized operator 
$L_{A_{X}} : L_{1, \delta}^{4}
(\mathfrak{u} ( E_{X, m}) \otimes \Lambda^{1} (X) ) 
\to L_{\delta -1}^{4}  
( \mathfrak{u} (E_{X, m}) \otimes ( \Lambda^{0} (X) \oplus \Lambda_{7}^{2} 
(X) ) )
$ 
with $\delta \in ( - 7, 0)$ is surjective, thus the condition in
Section 4.1 is satisfied. 
Therefore, we obtain a $Spin(7)$-instanton on this $E$ by Theorem 6.1.

Furthermore, in this example, 
$\dim C_Z =3$ and $\dim K_Z =0$, 
and we also have the following for the ALE side: 
\begin{lemma}
\begin{equation}
H^{1}_{\delta} (X , L_{D}^{m}) = 
\begin{cases} 
\C^{\frac{1}{3} m^2 ( 8m^2 -5)}  & (m \leq  0) , \\
0 & (m > 0 ). \\
\end{cases}
\label{eq:h1}
\end{equation}
\begin{equation}
H^{3}_{\delta} (X , L_{D}^{m}) = 
\begin{cases} 
\C^{\frac{1}{3} m^2 ( 8m^2 -5)}  & (m > 0) , \\
0 & (m \leq 0 ). \\
\end{cases}
\label{eq:h3}
\end{equation}
\end{lemma}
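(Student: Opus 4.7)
The plan is to mimic the preceding proof of the $H^2$ vanishing, using the same short exact sequence
\begin{equation*}
0 \to (L_D^{m-1})_\delta \to (L_D^m)_\delta \to \mathcal{O}_D \otimes L_D^m \to 0
\end{equation*}
and its induced long exact sequence in weighted Dolbeault cohomology. Since $X$ is the blow-up of $\C^4/\Z_4$ at the origin with exceptional divisor $D = \C\mathbb{P}^3$, the normal bundle gives $L_D|_D \cong \mathcal{O}_{\C\mathbb{P}^3}(-4)$, so $\mathcal{O}_D \otimes L_D^m \cong \mathcal{O}_{\C\mathbb{P}^3}(-4m)$. Standard projective-space cohomology then gives $h^0(\C\mathbb{P}^3, \mathcal{O}(-4m)) = \binom{-4m+3}{3}$ for $m \leq 0$ (zero otherwise), $h^3(\C\mathbb{P}^3, \mathcal{O}(-4m)) = \binom{4m-1}{3}$ for $m \geq 1$ (zero otherwise), and $h^1 = h^2 = 0$ for all $m$.

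For \eqref{eq:h1}, the vanishings $H^0_\delta(X, L_D^m) = 0$ (already proven) and $H^1(\C\mathbb{P}^3, \mathcal{O}(-4m)) = 0$ collapse the relevant segment of the long exact sequence to
\begin{equation*}
0 \to H^0(D, \mathcal{O}_D(-4m)) \to H^1_\delta(X, L_D^{m-1}) \to H^1_\delta(X, L_D^m) \to 0.
\end{equation*}
With the base case $H^1_\delta(X, \mathcal{O}_X) = 0$ for $\delta < 0$ (by the same Theorem 5.3 of \cite{MR1824171} cited in the $H^2$ proof), forward induction on $m \geq 1$ gives $H^1_\delta(X, L_D^m) = 0$, since the $H^0$ contribution vanishes there. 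Backward induction on $m \leq -1$ yields
\begin{equation*}
\dim H^1_\delta(X, L_D^{m-1}) = \dim H^1_\delta(X, L_D^m) + \binom{-4m+3}{3},
\end{equation*}
so setting $a_n := \dim H^1_\delta(X, L_D^{-n})$ one obtains $a_n = \sum_{k=1}^{n} \binom{4k-1}{3}$. Expanding $\binom{4k-1}{3} = \tfrac{1}{6}(64k^3 - 96k^2 + 44k - 6)$ and applying the standard closed forms for $\sum k$, $\sum k^2$, $\sum k^3$ collapses the sum to $\tfrac{1}{3}n^2(8n^2 - 5)$, proving \eqref{eq:h1}.

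For \eqref{eq:h3}, my preferred route is Serre duality: since $X$ is a crepant resolution, $K_X \cong \mathcal{O}_X$, and on ALE $4$-folds one has a perfect pairing
\begin{equation*}
H^3_\delta(X, L_D^m) \cong H^1_{-\delta - 8}(X, L_D^{-m})^{*}
\end{equation*}
for $\delta \in (-8, 0)$, so that the dual weight $-\delta - 8$ also lies in $(-8, 0)$ (in particular for the range $\delta \in (-7, 0)$ used in Section 4.1). Applying \eqref{eq:h1} to the right-hand side with $-m$ in place of $m$ then yields \eqref{eq:h3} immediately.

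The main obstacle is rigorously justifying the weighted Serre duality on the non-compact ALE space $X$ for the critical weights in $(-8, -7)$; one must verify that this strip is admissible, i.e., contains no indicial root of $\bar{\partial}$ coupled to $L_D^{-m}$. Should this prove delicate, an alternative is a parallel induction for $H^3_\delta$ directly: the vanishing $H^4_\delta(X, L_D^m) = 0$ for $\delta \in (-8, 0)$ (itself a consequence of Serre duality applied only to the already-established $H^0$ vanishing, or of Joyce's ALE results) serves as a base case, and the segment
\begin{equation*}
0 \to H^3_\delta(X, L_D^{m-1}) \to H^3_\delta(X, L_D^m) \to H^3(D, \mathcal{O}_D(-4m)) \to 0
\end{equation*}
of the long exact sequence, together with an analogous telescoping sum $\sum_{k=1}^{m} \binom{4k-1}{3}$, produces \eqref{eq:h3} by the same closed-form calculation.
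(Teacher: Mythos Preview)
Your proposal is correct and follows essentially the same route as the paper: the same short exact sequence and long exact sequence, the same base case $H^{1}_{\delta}(X,\mathcal{O}_X)=0$, forward induction to kill $H^{1}$ for $m>0$, and backward induction to build up the dimension for $m\le 0$. For $H^{3}$ the paper likewise offers both the Serre-duality shortcut and the parallel induction via $H^{3}(D,\mathcal{O}_D(-4m))$, exactly the two alternatives you describe; the paper does not pause over the admissible-weight issue you flag, so your inclusion of the second route as a fallback is prudent but not a departure.
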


\begin{proof}
We again use the long exact sequence induced by \eqref{eq:shortex}: 
\begin{equation*}
\begin{split}
\cdots &\longrightarrow H^0_{\delta} (X, L_{D}^{m}) 
\longrightarrow H^{0} (D , \mathcal{O}_{D} (-4m) ) 
\longrightarrow H^{1}_{\delta} (X, L_{D}^{m-1}) \\
&\quad \quad \quad \quad \quad \quad \quad \quad 
\longrightarrow H^1_{\delta} (X, L_{D}^{m}) 
\longrightarrow H^1 (D, \mathcal{O}_{D} (-4m ))  
\longrightarrow \cdots .  \\
\end{split}
\end{equation*}
Since $H^{0}_{\delta} (X , L_{D}^{m}) = H^{1} (D , \mathcal{O}_{D} ( -4 m)) = 0$, 
and 
\begin{equation*}
 \dim  H^{0} 
(D , \mathcal{O}_{D} (-4m) ) = 
\begin{cases}
\frac{(3-4m) (2 -4m) (1 -4m)}{6} & ( m \leq 0) , \\
0 & ( m > 0 ) ,\\ 
\end{cases}
\end{equation*}
we get 
\begin{equation*}
 \dim  H^{1}_{\delta}  
(X , L_{D}^{m} ) = 
\dim H^{1}_{\delta} 
(X, L_{D}^{m-1}) - 
\begin{cases}
\frac{(3-4m) (2 -4m) (1 -4m)}{6} & ( m \leq 0) , \\
0 & ( m > 0 ) . 
\label{eq:rec} 
\end{cases}
\end{equation*}
From this with $\dim H^{1}_{\delta} (X , \mathcal{O}_{X}) =0$, 
we obtain \eqref{eq:h1} by induction, and  
\eqref{eq:h3} follows from \eqref{eq:h1} either by Serre duality,   
or by the same method of proof using $\dim H^{3} (D , \mathcal{O}_{D} (-4m)) 
= - \frac{(3-4m)(2- 4m)(1-4m)}{6}$ if $m > 0$ and $0$ if $m \leq 0$.  
\end{proof}

Therefore, the real dimension of $K_{X_{n_j}}$ is $\frac{4 k^2}{3}  (32
k^2 -5)$ at $p_6 =p_7$, and 
$\frac{4 \ell^2}{3} (32
\ell^2 -5)$ at $p_8 =p_9$. 
We also have $\dim C_{Z} =3$ and the dimensions of all the other spaces in $K$ are zero. 
Hence, \eqref{eq:indexc} shows that the virtual dimension of the moduli
space in this example is given by 
$$ -3 + \frac{4 k^2 }{3}  (32 k^2 -5)  + \frac{ 4 \ell^2}{3} (32 \ell^2
-5) .   
$$
When $k=\ell=0$, we obtain the trivial, flat $SU(2)$ instanton, which is
rigid with automorphism group $SU(2)$ of dimension 3, 
and the virtual dimension is $-3$. 
For $k, \ell$ not both zero, we get a positive dimensional
moduli space,  
and for the generic gluing data,  the solution given by Theorem 6.1 
is unobstructed and irreducible, and the moduli space
is smooth of the given dimensions near the solution.


\addcontentsline{toc}{chapter}{Bibliography}

\begin{flushleft}
E-mail: yu2tanaka@gmail.com
\end{flushleft}


\end{document}